\numberwithin{equation}{section}
\def\@tocline#1#2#3#4#5#6#7{\relax
  \ifnum #1>\c@tocdepth 
  \else
    \par \addpenalty\@secpenalty\addvspace{#2}%
    \begingroup \hyphenpenalty\@M
    \@ifempty{#4}{%
      \@tempdima\csname r@tocindent\number#1\endcsname\relax
    }{%
      \@tempdima#4\relax
    }%
    \parindent\z@ \leftskip#3\relax \advance\leftskip\@tempdima\relax
    \rightskip\@pnumwidth plus4em \parfillskip-\@pnumwidth
    #5\leavevmode\hskip-\@tempdima
      \ifcase #1
       \or\or \hskip 1em \or \hskip 2em \else \hskip 3em \fi%
      #6\nobreak\relax
      \dotfill
      \hbox to\@pnumwidth{\@tocpagenum{#7}}
    \par
    \nobreak
    \endgroup
  \fi}
\theoremstyle{plain}
\newtheorem{theorem}{Theorem}[section]
\newtheorem{cor}[theorem]{Corollary}
\newtheorem{lemma}[theorem]{Lemma}
\newtheorem{remark}[theorem]{Remark}
\newtheorem{prop}[theorem]{Proposition}
\theoremstyle{definition}
\newcommand{\LR}[1]{\left(#1\right)}
\renewcommand{\d}{\differential}
\newcommand{\Done}{\mathcal{D}^{\rm 1D}}
\newcommand{\im}{\mathrm{i}}
\newcommand{\bx}{\mathbf{x}}
\newcommand{\R}{\mathbb{R}}
\newcommand{\lone}{\lambda^{\rm 1D}}
\newcommand{\psione}{\psi^{\rm 1D}}
\title{Dimensional reduction for a system of 2D anyons}
\author[N. Rougerie]{Nicolas Rougerie}
\address{Ecole Normale Sup\'erieure de Lyon \& CNRS,  UMPA (UMR 5669)}
\email{nicolas.rougerie@ens-lyon.fr}
\author[Q. Yang]{Qiyun Yang}
\address{Ecole Normale Sup\'erieure de Lyon, UMPA (UMR 5669)}
\email{qiyun.yang@ens-lyon.fr}
\date{December, 2023}
\begin{document}

\maketitle

\begin{abstract}    
Anyons with a statistical phase parameter $\alpha\in(0,2)$ are a kind of quasi-particles that, for topological reasons, only exist in a 1D or 2D world. 
We consider the dimensional reduction for a 2D system of anyons in a tight wave-guide. More specifically, we study the 2D magnetic-gauge picture model with an imposed anisotropic harmonic potential that traps particles much stronger in the $y$-direction than in the $x$-direction. 
We prove that both the eigenenergies and the eigenfunctions are asymptotically decoupled into the loose confining direction and the tight confining direction during this reduction. 
The limit 1D system for the $x$-direction is given by the impenetrable Tonks-Girardeau Bose gas, which has no dependency on $\alpha$, and no trace left of the long-range interactions of the 2D model.
\end{abstract}


\tableofcontents


\newpage

\section{Introduction}

\subsection{Motivation}

Fundamental particles are sorted into two symmetry classes: bosons (e.g. photons and other force carriers) and fermions (e.g. electrons, quarks and other matter particles).
Bosons follow Bose-Einstein statistics, while fermions obey Fermi-Dirac statistics and hence comply with the Pauli exclusion principle. 
For topological reasons, particles with statistics in between those of bosons and fermions cannot exist except in a one or two-dimensional system. Such so-called anyons can thus exist only as emergent quasi-particles.  
Recent experiments~\cite{BarEtalFev-20,NakEtalMan-20} provided firm evidence that the charge carriers in fractional quantum Hall systems~\cite{Goerbig-09,Laughlin-99,Jain-07,Rougerie-hdr} behave like anyons. 
This is due to the strong correlation between electrons under a strong magnetic field, leading to the formation of incompressible quantum liquids such as the Laughlin state. Charge carriers couple to excitations thereof (Laughlin quasi-particles or quasi-holes), and thereby can acquire exotic quantum statistics.
In these experiments, anyons are observed via transport along one-dimensional edges of the two-dimensional samples, which provides part of our motivation to study the dimensional reduction for 2D anyons trapped in 1D.

There is essentially a single agreed-upon model for 2D anyons, which was derived formally as an emergent description in fractional quantum Hall physics~\cite{AroSchWil-84,LunRou-16,LamLunRou-22}. On the other hand, there are several different inequivalent models for 1D anyonic behavior~\cite{Myrheim-99,Ouvry-07,Girardeau-06,AndBarJon-99,FreMoo-21,Moosavi-22,Rabello-96,AglGriJacPiSem-96,Kundu-99,BatGuaOel-06,GreSan-15,TanEggPel-15,Bon-etal-Pos-21}. Here we prove that, to leading order, 2D anyons confined to a 1D wave-guide always behave as fermions, independently of the original statistical parameter $\alpha \in (0,2)$. The different proposed 1D anyons models hence seem to describe each a physics of their own, a priori unrelated to 2D anyons, but accessible to different emergence phenomena~\cite{EdmEtalOhb-13,ValWesOhb-20,CarGreSan-16,ChiEtalCel-22,FroEtalTar-22}. Our results were announced in~\cite{RouYan-23a}, where the reader can find more physics background and discussion.

\subsection{The model}

Formally, one might want to describe $N\geq 2$ anyons by a (multi-valued) complex many-body wave-function $\Psi\in L^2(\mathbb{R}^{dN})$ in a $d$-dimensional space $\mathbb{R}^d$ satisfying
\begin{equation} \label{anyf}
    \Psi (\dots, \mathbf{x}_j,\dots, \mathbf{x}_k,\dots) = e^{\mathrm{i} \pi \alpha} \Psi (\dots, \mathbf{x}_k,\dots, \mathbf{x}_j,\dots)
\end{equation}
for some constant $\alpha$. The cases $\alpha = 0,1$ correspond to usual bosons and fermions respectively, and one can restrict to $\alpha \in [0,1)$ by periodicity and complex conjugation. Free, non-relativistic, 2D anyons of statistics parameters $\alpha \neq 0,1$ would be described by acting on such wave-functions with a usual Schr\"odinger Hamiltonian
$$ 
\sum_{j=1}^N \left(- \Delta_{\mathbf{x}_j} + V (\mathbf{x}_j) \right).
$$
Studying the spectral properties of this operator on an appropriate domain incorporating a symmetry such as~\eqref{anyf} would require modelling wave-functions as sections of line bundles, a formalism that one often prefers to bypass for analysis purposes~\cite{Myrheim-99,LunSol-13a}. This means using the so-called ``magnetic gauge picture'' as we explain next.

Let us consider the 2D system of anyons represented by the Hamiltonian 
\begin{equation*}
    \mathcal{H}^{2\mathrm{D}}_{\varepsilon} \colonequals \sum_{j=1}^N  \left( -\Delta_{\mathbf{x}_j} + V_{\varepsilon}(\mathbf{x}_j) \right) 
\end{equation*}
defined on the domain of anyonic wave ``functions''~\cite{Myrheim-99}, where, for some $\varepsilon>0$
\begin{equation*}\label{eq:pot}
 V_{\varepsilon}{(\mathbf{x})} =  V_{\varepsilon}{(x,y)} = {x}^2 + \frac{1}{\varepsilon^2} {y}^2. 
\end{equation*}
We write an anyonic wave ``function'' $\Phi$ as
\begin{equation*}\label{rewrite}
    \Phi (\mathbf{x}_1,\dots,\mathbf{x}_N) = \LR{\prod_{1\le j < k \le N} e^{\mathrm{i}\alpha\arg(\mathbf{x}_j-\mathbf{x}_k)} } \Psi(\mathbf{x}_1,\dots,\mathbf{x}_N),
\end{equation*}
where $\Psi$ is some symmetric (i.e. bosonic) function in $ L^2_{\mathrm{sym}}(\mathbb{R}^{2N})$ and $\arg\mathbf{x}$ is the angle of $\mathbf{x}$ as a two-dimensional vector in the plane. 
The multi-valued phase related to this angle can be written as 
\begin{equation*}
    e^{\mathrm{i}\alpha\arg\mathbf{x}} = \frac{z^{\alpha}}{\abs{z}^{\alpha}} \quad \text{for} \quad \mathbf{x} = (x,y) \quad \text{and}  \quad z = x +\mathrm{i}y, 
\end{equation*}
and its derivative with respect to $\mathbf{x}$ on a branch of $z^{\alpha}$ is
\begin{equation*}
    \grad e^{\mathrm{i}\alpha\arg\mathbf{x}}  = \mathrm{i} \alpha  \frac{\mathbf{x}^{\perp}}{\abs{\mathbf{x}}^2} e^{\mathrm{i}\alpha\arg\mathbf{x}}
\end{equation*}
with 
$$\mathbf{x}^{\perp} = (-y,x).$$
With this observation, one can check that acting on $\Phi$ with $\mathcal{H}^{2\mathrm{D}}_{\varepsilon}$ (anyonic gauge-picture) is formally equivalent to acting on the corresponding $\Psi$ with the modified Hamiltonian (magnetic gauge-picture) 
\begin{equation}\label{defH2D}
H^{2\mathrm{D}}_{\varepsilon} \colonequals \sum_{j=1}^N \left({ D_j^2 + V_{\varepsilon}(\mathbf{x}_j)}\right) =  \sum_{j=1}^N \left({ \left( -\im \grad_{\bx_j} + \alpha \mathbf{A}_j (\mathbf{x}_j)\right)^2 + V_{\varepsilon}(\mathbf{x}_j)}\right),
\end{equation}
where 
\begin{equation*}\label{eq:vecpot}
 \mathbf{A}_j (\mathbf{x}) \colonequals \sum_{k\neq j} \frac{\LR{\mathbf{x}-\mathbf{x}_k}^{\perp}}{\absolutevalue{\mathbf{x}-\mathbf{x}_k}^2},
\end{equation*}
i.e. it is equivalent to study the operator $H^{\mathrm{2D}}_\varepsilon$ defined on $L^2_{\mathrm{sym}}(\mathbb{R}^{2N})$. Note that $\mathbf{A}_j$ is a function on $\mathbb{R}^{2N}$ instead of on $\mathbb{R}^2$, we write it in this form for the sake of simplicity.
The effective gauge vector potential $\mathbf{A}_j$ is associated with a magnetic field perpendicular to the plane, of magnitude 
\begin{equation*}
\operatorname{curl} \mathbf{A}_j  \colonequals   \grad^{\perp}_{\mathbf{x}} \cdot {\mathbf{A}_j} = \sum\limits_{k\neq j} \Delta_{\mathbf{x}} \ln{\absolutevalue{\mathbf{x}-\mathbf{x}_k}} = 2\pi \sum\limits_{k\neq j}\delta(\mathbf{x}-\mathbf{x}_k),
\end{equation*}  
where we denoted 
$$ 
\grad^{\perp}_{\mathbf{x}} = (-\partial_{y}, \partial_{x} ).
$$
Hence, in this model, particles feel one another as carrying Aharonov-Bohm magnetic fluxes of intensity $2\pi\alpha$. This generates the long-range magnetic interactions in~\eqref{defH2D}. Throughout the paper, we use the above magnetic gauge picture for our system of 2D anyons, as in e.g.~\cite{LunSol-13a,LunSol-13b,LunSol-14}.

We investigate the most stable states of the above Hamiltonian, i.e. the low-lying energy spectrum. Letting the parameter $\varepsilon$ in $H^{2\mathrm{D}}_{\varepsilon}$ go to $0$, intuitively, the trapping potential $V_{\varepsilon}$ will force particles together around the one-dimensional line $y=0$, and the vector potential $\mathbf{A}_j$ will keep particles apart. We find that the Tonks-Girardeau gas is the correct limit 1D system on $y=0$, i.e. we find a model of impenetrable bosons in one dimension, described by the free Hamiltonian
\begin{equation*}
    H^{1\mathrm{D}} \colonequals \sum_{j=1}^N \LR{-\partial_{x_j}^2 + x_j^2} 
\end{equation*}
but restricted to the domain of symmetric functions vanishing on diagonals, 
\begin{equation*}\label{eq:dom1D}
\Done := \left\{ \psi \in H^2 (\R^N), \, \psi \equiv 0 \mbox{ on } \mathbb{D}^{1\mathrm{D}} \right\},
\end{equation*}
where 
$$ \mathbb{D}^{1\mathrm{D}} \colonequals \left\{ (x_1,\dots,x_N) \in \mathbb{R}^{N} : \; x_j = x_l  \text{ for some } j\neq l \right\}.$$
One can find more details in Section~\ref{sec:prelim} below. 

We note that the repulsive effect of the vector potential $\mathbf{A}_j$ is well-documented in the literature (see e.g.~\cite{LunSol-13a,LunSol-13b,LunSol-14,LarLun-16,GirRou-22} for rigorous results). In our situation of strong confinement along one spatial direction, this repulsion is so strong that it forces a kind of hard-core exclusion upon the particles. This is in some sense our main finding here.

The movement in the $y$ direction will be, in the $\varepsilon\to 0$ limit, frozen in the ground state of the harmonic oscillator
\begin{equation*}\label{defHO}
    H^{\mathrm{HO}}_{\varepsilon} \colonequals - \partial_y^2 +\frac{y^2}{\varepsilon^2},
\end{equation*}
namely 
\begin{equation} \label{defu}
u_{\varepsilon}(y) \colonequals \LR{{\sqrt{\pi\varepsilon}} }^{-\frac{1}{2}} e^{-\frac{y^2}{2\varepsilon}}.
\end{equation}
The corresponding eigenvalue (ground state energy) is
$$e_{\varepsilon} \colonequals  \frac{1}{\varepsilon}. $$

\subsection{Main results and discussion}

We define our Hamiltonians as Friedrichs extensions of non-negative quadratic forms. Regarding the 2D Hamiltonian, we close the quadratic form starting from the core
$$ C^\infty_c \left(\R^{2N} \setminus \mathbb{D}^{2\mathrm{D}}\right), $$
where 
$$ \mathbb{D}^{2\mathrm{D}} \colonequals \left\{ (\mathbf{x}_1,\dots,\mathbf{x}_N) \in \mathbb{R}^{2N} : \; \mathbf{x}_j = \mathbf{x}_l  \text{ for some } j\neq l \right\}, $$
and then consider the unique self-adjoint extension $H^{\mathrm{2D}}_{\varepsilon}$ whose domain is included in the form closure of the above. Other self-adjoint extensions exist~\cite{CorOdd-18,CorFer-21} but shall not be considered hereafter. More details on these definitions will be provided in Section~\ref{sec:prelim} below. 

Our main results are stated as follows:

\begin{theorem}[\textbf{Relation between energies}] \label{thmEE}\mbox{}\\
    Let $\lambda^{\mathrm{2D}}_{k}$ and $\lambda^{1\mathrm{D}}_k$ be the $k$-th eigenvalues of $H^{\mathrm{2D}}_{\varepsilon}$ and $H^{1\mathrm{D}}$ respectively (counting the multiplicity), and let $e_{\varepsilon}$ be the ground energy of $H^{\mathrm{HO}}_{\varepsilon}$. For any fixed $k$, we have, in the $\varepsilon \to 0$ limit,
    \begin{equation*}
        \lambda^{\mathrm{2D}}_{k} =  {N}e_{\varepsilon} + \lambda^{\mathrm{1D}}_k + o(1).  \label{eqE}
    \end{equation*}
\end{theorem}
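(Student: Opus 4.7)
The strategy is to match upper and lower bounds on each $\lambda^{2\mathrm{D}}_k$ via the min-max principle.

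\textbf{Upper bound.} The plan is to exhibit a $k$-dimensional family of trial functions, most conveniently expressed in the anyonic gauge as
\begin{equation*}
\tilde\Psi^{(j)}_\varepsilon \;=\; \prod_{k<l}(z_k - z_l)^\alpha \, G^{(j)}_\varepsilon(\mathbf{x}_1,\ldots,\mathbf{x}_N), \qquad j = 1,\ldots,k,
\end{equation*}
with $G^{(j)}_\varepsilon$ bosonic (symmetric) and real, chosen so that $|\tilde\Psi^{(j)}_\varepsilon|$ reduces to the $j$-th Tonks--Girardeau eigenfunction times the transverse Gaussian $\prod_i u_\varepsilon(y_i)$ as $\varepsilon \to 0$. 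A naive product ansatz $\psi^{1\mathrm{D}}_j(x_1,\ldots,x_N)\prod_i u_\varepsilon(y_i)$ already captures the $N e_\varepsilon$ and $\lambda^{1\mathrm{D}}_j$ pieces, but leaves a residual magnetic contribution of order $\alpha^2$ coming from $\alpha^2 \int |\mathbf{A}_j|^2 |\Psi|^2$ that does \emph{not} vanish as $\varepsilon \to 0$; this is why a Jastrow correction is needed. The advantage of the anyonic gauge is the integration-by-parts identity
\begin{equation*}
\int \bigl|\nabla(z^\alpha G)\bigr|^2 \, d\mathbf{r} \;=\; \int |z|^{2\alpha}\,|\nabla G|^2 \, d\mathbf{r},
\end{equation*}
which wipes out the residual $\alpha$-dependent term provided $G^{(j)}_\varepsilon$ is taken to vanish at the 2D pairwise diagonals like $|x_k - x_l|^{1-\alpha}$, smoothed at scale $\sqrt\varepsilon$, and to interpolate to the TG eigenfunction outside such neighborhoods. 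Approximate orthonormality of the family follows from orthonormality of the $\psi^{1\mathrm{D}}_j$ and tightness of the transverse Gaussian.

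\textbf{Lower bound.} For any $\Psi$ in the form domain, expand in the transverse HO basis, $\Psi = \sum_\mathbf{n} c_\mathbf{n}(x_1,\ldots,x_N)\,\chi^\varepsilon_\mathbf{n}(y_1,\ldots,y_N)$. The $y$-kinetic plus $y$-confining potential contribute $\sum_\mathbf{n} E^\varepsilon_\mathbf{n} \|c_\mathbf{n}\|^2 \geq N e_\varepsilon \|\Psi\|^2 + (2/\varepsilon) \|\Pi^\perp \Psi\|^2$, producing a transverse spectral gap of $2/\varepsilon$. On the ground-mode sector $\Psi_0 = c_0(x) \prod_i u_\varepsilon(y_i)$, the $x$-kinetic plus $x$-trap is exactly $H^{1\mathrm{D}}$ acting on $c_0$; the crux is to show that the magnetic interaction, averaged against the concentrating Gaussian, suffices to force $c_0$ into the TG domain $\mathcal{D}^{1\mathrm{D}}$ in the limit. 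Here one invokes Hardy-type inequalities for the anyonic magnetic form (of Lundholm--Solovej type) to convert $\alpha^2|\mathbf{A}_j|^2$ into an effective hard-core penalty on the 1D diagonals. Off-diagonal couplings between the $\mathbf{n} = 0$ and $\mathbf{n} \neq 0$ sectors are absorbed through the transverse gap together with Cauchy--Schwarz. A 1D min-max argument then yields $\lambda^{2\mathrm{D}}_k \geq N e_\varepsilon + \lambda^{1\mathrm{D}}_k - o(1)$.

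\textbf{Main obstacle.} The deepest technical issue is justifying the $\alpha$-independence of the limit. Naively, integrating the anyonic repulsion against the concentrating Gaussian in the $y$-variables produces a Calogero-type $\alpha^2/(x_j - x_k)^2$ potential in the effective 1D problem, whose spectrum depends on $\alpha$; the theorem instead claims the $\alpha$-independent Tonks--Girardeau value. Producing this exact cancellation demands, on the upper-bound side, the explicit Jastrow factor $\prod(z_k - z_l)^\alpha$ together with the integration-by-parts identity above, executed with enough care that the regularized $G^{(j)}_\varepsilon$ actually saturates the bound; on the lower-bound side, a magnetic Hardy inequality sharp enough to detect the hard-core threshold but no further $\alpha$-dependent correction. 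Matching these bounds to the same leading constant $\lambda^{1\mathrm{D}}_k$, uniformly for fixed $k$ and $\varepsilon \to 0$, is the core difficulty of the argument.
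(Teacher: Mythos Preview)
Your upper-bound strategy is workable but unnecessarily roundabout. The paper uses a trial state of the form $\psi(x_1,\ldots,x_N)\,U_\varepsilon(y_1,\ldots,y_N)\,e^{-i\alpha S}$ with $S=\sum_{j<l}\arctan\frac{y_j-y_l}{x_j-x_l}$. Because $\nabla_{\mathbf{x}_j}S=\mathbf{A}_j$ exactly on $\{x_j\neq x_l\ \forall j\neq l\}$, and $S$ is bosonic-symmetric (unlike $\sum\arg(z_j-z_l)$), conjugation by this \emph{pure phase} removes the magnetic potential entirely and gives the exact identity $\mathcal{E}^{2\mathrm{D}}_\varepsilon(\Psi)=\mathcal{E}^{1\mathrm{D}}(\psi)+Ne_\varepsilon$ for any $\psi\in\mathcal{D}_q(H^{1\mathrm{D}})$, with no regularization and no error term. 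Your Jastrow factor $\prod(z_k-z_l)^\alpha$ carries the extra modulus $\prod|z_k-z_l|^\alpha$, forcing you to build a compensating singularity into $G^{(j)}_\varepsilon$, regularize it at scale $\sqrt\varepsilon$, and then estimate the regularization errors; the $\arctan$ phase sidesteps all of that.

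Your lower bound has a genuine gap. The transverse HO expansion does not decouple cleanly: both components of $\mathbf{A}_j$ depend on all $x$- and $y$-variables, so the $y$-part of the magnetic kinetic energy is not $\sum_\mathbf{n}E^\varepsilon_\mathbf{n}\|c_\mathbf{n}\|^2$, and what you call ``off-diagonal couplings'' is not well defined in this context. More importantly, no Hardy inequality of Lundholm--Solovej type will by itself yield the Tonks--Girardeau spectrum rather than the Calogero one: those inequalities give $\sum\int|D_j\Psi|^2\ge C_\alpha\sum\int|\Psi|^2/|\mathbf{x}_j-\mathbf{x}_l|^2$ with an $\alpha$-dependent constant, and averaging that against the concentrating Gaussian produces precisely the $\alpha$-dependent 1D inverse-square potential you are trying to avoid. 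The paper instead writes $\Psi^{2\mathrm{D}}_k=\Phi_k U_\varepsilon$, proves an exact decoupling $\lambda^{2\mathrm{D}}_k=Ne_\varepsilon+\sum\int|D_j\Phi_k|^2U_\varepsilon^2+\sum\int x_j^2|\Psi^{2\mathrm{D}}_k|^2$, and \emph{then} conjugates by the same $e^{i\alpha S}$ to turn $|D_j\Phi_k|^2$ into $|\nabla_j(\Phi_k e^{i\alpha S})|^2$ away from the 1D diagonals. After rescaling, compactness gives a weak $H^1$-limit $\psi_{0,k}$ depending only on $x$; the anyonic Hardy inequality is used only at this stage, and only to show that $\psi_{0,k}$ vanishes on the 1D diagonals and hence belongs to the TG form domain. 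The cancellation of the $\alpha$-dependence is thus effected by the $\arctan$ gauge transformation, applied symmetrically in the upper and lower bounds, not by a sharper Hardy inequality.
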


\begin{theorem}[\textbf{Relation between eigenfunctions}] \label{thmEF}\mbox{}\\
    With the same notation as in Theorem \ref{thmEE}, let $\{\Psi^{\mathrm{2D}}_{k}\}_k$ be an orthonormal basis of eigenfunctions corresponding to the eigenvalues $\{\lambda^{\mathrm{2D}}_{k}\}_k$. Then, after extracting a subsequence, there exists an orthonormal basis of eigenfunctions $ \{\psi^{\mathrm{1D}}_{k}\}_k$ corresponding to eigenvalues $\{\lambda^{1\mathrm{D}}_k\}_k$ such that, in the $\varepsilon \to 0$ limit, for any fixed $k$,
    $$\Psi^{\mathrm{2D}}_{k} - \psi^{1\mathrm{D}}_k U_{\varepsilon} \to 0 \quad  \text{ strongly in }  L^2(\mathbb{R}^{2N})$$
    and 
    $$ \varphi_{k}^{\varepsilon} \to  \psi^{1\mathrm{D}}_k \quad  \text{ strongly in }  H^1(\mathbb{R}^{N}\backslash \mathbb{D}^{1\mathrm{D}}), $$
    where 
    \begin{equation} \label{defU}
        U_{\varepsilon} (y_1,\dots,y_N)  = \prod_{j=1}^N u_{\varepsilon}(y_j)
    \end{equation}
    with $u_\varepsilon$ as in~\eqref{defu} and 
    \begin{equation}\label{eq:proj1D}
    \varphi_{k}^{\varepsilon} (x_1,\dots,x_N)\colonequals     \int_{\mathbb{R}^N}{\Psi_{k}^{\mathrm{2D}}} (\mathbf{x}_1,\dots,\mathbf{x}_N)e^{\mathrm{i}\alpha S(\mathbf{x}_1,\dots,\mathbf{x}_N)} U_{\varepsilon}({y}_1,\dots,{y}_N)\differential y_1\cdots\differential y_N,
    \end{equation}
    \begin{align}
    S(\mathbf{x}_1,\dots,\mathbf{x}_N) = \sum_{1\le j < l \le N}  \arctan\frac{y_j-y_l}{x_j-x_l}.\label{defS}
    \end{align}
\end{theorem}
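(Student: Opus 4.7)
The plan is to promote Theorem \ref{thmEE} from an energy statement to an eigenfunction one by controlling how each $\Psi^{\mathrm{2D}}_k$ factorizes, up to small errors, into the tight-oscillator ground state $U_\varepsilon$ and a 1D function $\psi^{\mathrm{1D}}_k$. The smooth gauge $e^{\mathrm{i}\alpha S}$ in~\eqref{defS} is designed so that $S \equiv 0$ along $\{y=0\}\setminus\mathbb{D}^{2\mathrm{D}}$, and conjugation of $H^{\mathrm{2D}}_\varepsilon$ by $e^{-\mathrm{i}\alpha S}$ should, to leading order in $\varepsilon$, uncouple the operator into $H^{\mathrm{1D}}$ in the $x$-variables plus the tight oscillator $\sum_j H^{\mathrm{HO}}_\varepsilon(y_j)$ in the $y$-variables, the singular Aharonov-Bohm interaction being essentially frozen out by this transformation near the 1D axis. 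The object $\varphi^\varepsilon_k$ in \eqref{eq:proj1D} is precisely the projection of the gauge-rotated eigenfunction onto $U_\varepsilon$ in the $y$ variables.

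From Theorem \ref{thmEE} the excess energy $\lambda^{\mathrm{2D}}_k - N e_\varepsilon$ is bounded independently of $\varepsilon$. First, using the spectral gap $2/\varepsilon$ of $H^{\mathrm{HO}}_\varepsilon$ above its ground state, I would write
$$ e^{\mathrm{i}\alpha S}\Psi^{\mathrm{2D}}_k = \varphi^\varepsilon_k(x)\, U_\varepsilon(y) + R^\varepsilon_k, $$
with $R^\varepsilon_k$ orthogonal to $U_\varepsilon$ in each $y_j$, and conclude from the bound on the $y$-part of the energy that $\|R^\varepsilon_k\|_{L^2(\R^{2N})}^2 \leq C\varepsilon$, hence $\|\varphi^\varepsilon_k\|_{L^2(\R^N)} \to 1$. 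Expanding the magnetic quadratic form along this decomposition and invoking the lower bound used to establish Theorem \ref{thmEE}, the residual $\varepsilon$-independent energy then provides a uniform bound on the 1D kinetic and harmonic energy of $\varphi^\varepsilon_k$ on $\R^N \setminus \mathbb{D}^{\mathrm{1D}}$.

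These bounds, combined with the harmonic confinement in $x$, yield compactness: after extraction, $\varphi^\varepsilon_k \to \psi^{\mathrm{1D}}_k$ strongly in $L^2(\R^N)$ and weakly in $H^1_{\mathrm{loc}}(\R^N\setminus\mathbb{D}^{\mathrm{1D}})$. That $\psi^{\mathrm{1D}}_k$ actually lies in $\Done$ (i.e. vanishes on $\mathbb{D}^{\mathrm{1D}}$) would follow from the Hardy-type vanishing $\Psi^{\mathrm{2D}}_k \sim |\mathbf{x}_j-\mathbf{x}_l|^{\alpha}$ at the 2D diagonal, transferred to $x$-variables by integrating out $y$. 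The $L^2$ convergence of the full theorem then follows from the algebraic splitting
$$ \Psi^{\mathrm{2D}}_k - \psi^{\mathrm{1D}}_k U_\varepsilon = e^{-\mathrm{i}\alpha S}(\varphi^\varepsilon_k - \psi^{\mathrm{1D}}_k)U_\varepsilon + \bigl(e^{-\mathrm{i}\alpha S}-1\bigr)\psi^{\mathrm{1D}}_k U_\varepsilon + e^{-\mathrm{i}\alpha S} R^\varepsilon_k, $$
with the three terms controlled respectively by the $L^2$ convergence of $\varphi^\varepsilon_k$, by $|e^{-\mathrm{i}\alpha S}-1|\lesssim \sum |y_j|/|x_j-x_l|$ combined with the $|y|$-concentration of $U_\varepsilon$, and by $\|R^\varepsilon_k\|^2 \leq C\varepsilon$.

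It remains to identify $\psi^{\mathrm{1D}}_k$ as an eigenfunction of $H^{\mathrm{1D}}$ at the eigenvalue $\lambda^{\mathrm{1D}}_k$ and to turn the resulting family into an orthonormal eigenbasis. Testing the 1D quadratic form against $\varphi^\varepsilon_k$ gives an upper bound, while the lower bound used for Theorem \ref{thmEE} provides a matching lower bound, so that $\langle \varphi^\varepsilon_k, H^{\mathrm{1D}}\varphi^\varepsilon_k\rangle \to \lambda^{\mathrm{1D}}_k$. The coincidence of this limit energy with the quadratic form of the limit upgrades weak to strong $H^1$ convergence on $\R^N\setminus\mathbb{D}^{\mathrm{1D}}$ and variationally forces $\psi^{\mathrm{1D}}_k$ to be an eigenfunction at that eigenvalue. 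Orthonormality passes to the limit from $\langle\varphi^\varepsilon_k,\varphi^\varepsilon_l\rangle_{L^2(\R^N)} = \langle\Psi^{\mathrm{2D}}_k,\Psi^{\mathrm{2D}}_l\rangle_{L^2(\R^{2N})} + o(1) = \delta_{kl}+o(1)$, while completeness on each eigenspace of $H^{\mathrm{1D}}$ is forced by a dimension count from Theorem \ref{thmEE}. The main technical obstacle throughout will be controlling the cross terms produced by the singular vector potential $\mathbf{A}_j$ against the remainder $R^\varepsilon_k$; this is where the vanishing of $\Psi^{\mathrm{2D}}_k$ at the 2D diagonal, and a careful re-use of the estimates behind Theorem \ref{thmEE}, must combine to absorb the otherwise singular contributions.
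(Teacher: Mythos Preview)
Your overall strategy aligns closely with the paper's: decompose via the gauge-rotated projection, use the spectral gap for the remainder, extract compactness from the excess-energy bound, identify limits as 1D eigenfunctions via energy matching, and upgrade to strong $H^1$ convergence. The paper organizes things slightly differently, working first with the full rescaled function $\phi_{\varepsilon,k}(x,y) := e^{\mathrm{i}\alpha S}\Psi^{\mathrm{2D}}_k U_\varepsilon^{-1}$ (after $y \mapsto \sqrt{\varepsilon}\,y$) rather than its $y$-projection $\varphi^\varepsilon_k$; this makes the energy decoupling (Proposition~\ref{decoupE}) an exact identity and avoids cross-term bookkeeping, with the connection to $\varphi^\varepsilon_k$ made only at the very end.

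The one place where your sketch has a genuine gap is the vanishing on the 1D diagonals. Your heuristic ``$\Psi^{\mathrm{2D}}_k \sim |\mathbf{x}_j-\mathbf{x}_l|^\alpha$ at the 2D diagonal, transferred to $x$-variables by integrating out $y$'' does not work as stated: the 2D diagonal $\{\mathbf{x}_j=\mathbf{x}_l\}$ has codimension two, whereas $\{x_j=x_l\}$ has codimension one, and $\Psi^{\mathrm{2D}}_k$ has no reason to vanish on the latter set where $y_j\neq y_l$ is allowed. Simply integrating out $y$ therefore does not force a zero trace. The paper's mechanism is instead quantitative: the isolated two-anyon Hardy inequality (Theorem~\ref{isolatedHineq}) yields $\int_{G^x_\varepsilon\times G^y}|\phi_{\varepsilon,k}|^2 U_1^2 \le C\varepsilon$ on a shrinking strip $\{|x_1-x_2|<\sqrt{\varepsilon}/2\}$ (Lemma~\ref{nearDiag}), and then a Newton--Leibniz/trace argument combined with uniform $W^{1,p}$ bounds (Lemmas~\ref{W1pbounded} and~\ref{step2}) transfers this smallness to the trace on $\{x_1=x_2\}$, away from three-body collisions, with a final limit $\gamma\to 0$ after $\varepsilon\to 0$ to cover all of $\mathbb{D}^{1\mathrm{D}}$. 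This is the technical heart of the argument and cannot be replaced by the pointwise 2D-diagonal behavior alone. Your closing remark about ``controlling the cross terms produced by $\mathbf{A}_j$ against $R^\varepsilon_k$'' also misidentifies the difficulty: the gauge $e^{\mathrm{i}\alpha S}$ exactly cancels $\mathbf{A}_j$ on $\Lambda_0$ (equation~\eqref{eq:grad S}), so after the energy decoupling there are no residual magnetic cross terms---the hard part is purely the diagonal trace.
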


We emphasize the role of the phase factor 
 \begin{equation*} 
 e^{\mathrm{i}\alpha S(\mathbf{x}_1,\dots,\mathbf{x}_N)} = \prod_{j<k} e^{\mathrm{i}\alpha \arctan\frac{y_j-y_k}{x_j-x_k}}
 \end{equation*}
apparent in~\eqref{eq:proj1D}. Extracting it from the many-body wave-functions has the effect of gauging the long-range magnetic interactions away, explaining that our limit model does not depend on $\alpha$. 
The only remnant of the 2D interactions is the hard-core constraint upon particle encounters imposed in the domain of the 1D Hamiltonian. 
As we explain in more detail in~\cite{RouYan-23a}, failing to perform this singular gauge transformation would lead to a limit 1D model of Calogero-type
\begin{equation*}
H^{\mathrm{Cal}}_{\alpha} \colonequals  \sum\limits_{j=1}^N \left(-\partial_{{x}_j}^2 + |x_j| ^2 \right) + 2 \alpha^2 \sum\limits_{1\le j <l\le N} \frac{1}{|{x}_j-{x}_l|^2},
\end{equation*}
which is also a candidate for the description of 1D anyons. Indeed, the energy of an ansatz 
$$\psi(x_1,\ldots,x_N) U_{\varepsilon} (y_1,\ldots,y_N)$$
under $H^{2\mathrm{D}}_{\varepsilon} $ is almost the sum of $Ne_{\varepsilon}$ and the energy of $\psi $ under $H^{\mathrm{Cal}}_{\alpha} $ instead of $H^{1\mathrm{D}}$. 
The correct ansatz actually is of the form 
\begin{equation}\label{eq:ansatz}
\psi(x_1,\ldots,x_N) U_{\varepsilon} (y_1,\ldots,y_N) e^{-\mathrm{i}\alpha S (\mathbf{x}_1,\ldots,\mathbf{x}_N)} 
\end{equation}
with $S$ as in~\eqref{defS}, whose gradient is close to $\mathbf{A}$ and thus compensates its action on $\psi$ in the above ansatz. 
One may check that 
 \begin{equation}\label{eq:grad S}
\grad_{\mathbf{x}_j} S = \mathbf{A}_j  
 \end{equation}
when $ x_l \neq x_j$ for all $l\neq j$, with a distributional component on the 1D diagonals $\{\exists\,  j\neq l,\,  x_j = x_l\}$ that gives no contribution when combined with the vanishing of trial states on this set. Note that $S$ is invariant under the exchange of particle labels, so that~\eqref{eq:ansatz} does have the necessary bosonic symmetry, unlike the function obtained by changing ~\eqref{defS} into 
$$
\sum_{1\le j < l \le N}  \arg \left(\mathbf{x_j} - \mathbf{x_k}\right).
$$ 
The proof for Theorem \ref{thmEE} will be separated into proving two opposite inequalities in two separate sections, upper and lower bounds for $\lambda_k^{2\mathrm{D}}$. Theorem~\ref{thmEF} follows from these sharp energy bounds, as we prove in the last section. In essence we follow a $\Gamma$-convergence approach, which is an equivalent way of checking convergence of operators in resolvent sense~\cite{DalMaso-93}. 

\bigskip

\noindent\textbf{Acknowledgments.} 
Funding from the European Research Council (ERC) under the European Union's Horizon 2020 Research and Innovation Programme (Grant agreement CORFRONMAT No 758620) is gratefully acknowledged. We had interesting discussions with Douglas Lundholm, Michele Correggi, Per Moosavi and Nikolaj Thomas Zinner.

\section{Preliminaries}\label{sec:prelim}

We first collect a few general facts. The more technically crucial in the sequel is the Hardy inequalities for 2D anyons that we recall and/or improve below. Before this, we briefly recall the definition of the operators of interest as Friedrichs extensions of non-negative quadratic forms. We conclude the section with standard definitions for the spectral data we are interested in, and by recalling elements of Girardeau's exact solution for the impenetrable 1D Bose gas~\cite{Girardeau-60,MinVig-22,MisEtalZin-22}.

\subsection{Domain of the Hamiltonians}

We start by defining the domains on which the operators $H^{2\mathrm{D}}_{\varepsilon} $ and $H^{ 1\mathrm{D} }$ will be defined hereafter. Throughout the paper, we use Friedrichs extensions to define the Hamiltonians in a weak (distributional) sense. For a more general discussion on that topic, readers can refer to, for example, \cite[Chapters 2 and 3]{Lewin-livre22} or~\cite{ReeSim1,ReeSim2}.

\subsubsection{2D model} We first discuss $H^{2\mathrm{D}}_{\varepsilon}$. 
As an operator on $L^2_{\mathrm{sym}}(\mathbb{R}^{2N})$, it can clearly be defined on the subspace 
$$
\mathcal{C}^{2\mathrm{D}} \colonequals \mathcal{C}^{\infty}_{c} (\mathbb{R}^{2N}\backslash \mathbb{D}^{2\mathrm{D}})\cap L^2_{\mathrm{sym}}(\mathbb{R}^{2N}),
$$
where $\mathbb{D}^{2\mathrm{D}} $ represents the diagonals in $\mathbb{R}^{2N}$, i.e. 
$$ 
\mathbb{D}^{2\mathrm{D}} \colonequals \left\{ (\mathbf{x}_1,\dots,\mathbf{x}_N) \in \mathbb{R}^{2N} : \mathbf{x}_j = \mathbf{x}_l  \text{ for some } j\neq l \right\}
$$
and $\mathcal{C}^{\infty}_{c} (\mathbb{R}^{2N}\backslash \mathbb{D}^{2\mathrm{D}}) $ is the set of smooth functions with compact support contained in $\mathbb{R}^{2N}\backslash \mathbb{D}^{2\mathrm{D}}$. 
Our operator is symmetric and corresponds to the quadratic form $\mathcal{E}_{\varepsilon}^{2\mathrm{D}}$ defined by 
\begin{equation*}
    \mathcal{E}^{2\mathrm{D}}_{\varepsilon}(\phi) \colonequals \langle \phi | H^{2\mathrm{D}}_{\varepsilon} \phi \rangle_{L^2} =  \sum_{j=1}^N  \int_{\mathbb{R}^{2N}}   \left( {  |{D_j\phi}|^2 +  V_{\varepsilon}(\mathbf{x}_j)\absolutevalue{\phi}^2 } \right), \quad \phi \in \mathcal{C}^{2\mathrm{D}}. 
\end{equation*}
Since the above is non-negative, i.e. $\mathcal{E}_{\varepsilon}^{2\mathrm{D}}(\phi) \ge 0 $ for all $\phi \in\mathcal{C}^{2\mathrm{D}}$, it is closable and we denote its closure by $( \mathcal{E}_{\varepsilon}^{2\mathrm{D}}, \mathcal{D}_q(H_{\varepsilon}^{2\mathrm{D}}) ) $, where $ \mathcal{D}_q(H_{\varepsilon}^{2\mathrm{D}}) $ is the completion of $\mathcal{C}^{2\mathrm{D}}$ under the norm 
$$
\sqrt{ \mathcal{E}_{\varepsilon}^{2\mathrm{D}} (\cdot) } + \norm{\cdot}_{L^2} 
$$ 
and is a dense subspace of $L^2_{\mathrm{sym}}(\mathbb{R}^{2N}) $~\cite[Theorem~3.10]{Lewin-livre22}.
For simplicity, we denote by $ \mathcal{E}_{\varepsilon}^{2\mathrm{D}}$ the closure  in the sequel. 
Then there exists a unique self-adjoint extension of $H^{2\mathrm{D}}_{\varepsilon}$, called Friedrichs extension, with extended domain $\mathcal{D}(H^{2\mathrm{D}}_{\varepsilon})$ contained in $\mathcal{D}_q(H_{\varepsilon}^{2\mathrm{D}}) $ \cite[Corollary 3.17]{Lewin-livre22}.  
One can check that
\begin{equation*}
    \mathcal{D}_q(H^{2\mathrm{D}}_{\varepsilon}) = \left\{ \phi \in L^2_{\mathrm{sym}}(\mathbb{R}^{2N}) : \forall j, \; D_j \phi, \; |\mathbf{x}_j| \phi \in L^2(\mathbb{R}^{2N})  \right\} 
\end{equation*}
is independent of $\varepsilon$ \cite[Theorem 5]{LunSol-14}.

\subsubsection{1D model} Our approach to $H^{1\mathrm{D}}$ is similar. 
It can be defined naturally on the dense subspace
$$
\mathcal{C}^{1\mathrm{D}}\colonequals \mathcal{C}_c^{\infty}(\mathbb{R}^N\backslash \mathbb{D}^{1\mathrm{D}})\cap L^2_{\mathrm{sym}}(\mathbb{R}^N) 
$$ 
that is a nice enough subspace of symmetric functions vanishing on diagonals. 
Clearly, $(H^{1\mathrm{D}}, \mathcal{C}^{
1\mathrm{D}})$ is symmetric and its quadratic form is 
\begin{equation*}
    \mathcal{E}^{1\mathrm{D}}(\varphi) \colonequals \langle \varphi | H^{1\mathrm{D}} \varphi \rangle_{L^2} = \sum_{j=1}^N \int_{\mathbb{R}^{N}} \LR{ \left|\partial_{x_j} \varphi \right|^2  + |x_j|^2\left|\varphi\right|^2 }, \quad \varphi\in \mathcal{C}^{1\mathrm{D}},
\end{equation*}
which is also bounded below by $0$. Then its closure is defined on $\mathcal{D}_q(H^{1\mathrm{D}})$, the completion of $\mathcal{C}^{1\mathrm{D}}$ under the norm 
$$
\sqrt{ \mathcal{E}^{1\mathrm{D}}(\cdot) } + \norm{\cdot}_{L^2}
$$ i.e. 
\begin{equation*}
    \mathcal{D}_q(H^{1\mathrm{D}}) = \left\{ \varphi \in H^1_0(\mathbb{R}^N\backslash \mathbb{D}^{1\mathrm{D}}) \cap L^2_{\mathrm{sym}}(\mathbb{R}^N) : \forall j, x_j\varphi \in L^2\left(\mathbb{R}^N\right) \right\},
\end{equation*}
where $H^1_0(\mathbb{R}^N\backslash \mathbb{D}^{1\mathrm{D}})$ is the completion of $\mathcal{C}_c^{\infty}(\mathbb{R}^N\backslash \mathbb{D}^{1\mathrm{D}}) $ under the natural $H^1$-norm. 
The Friedrichs extension of $(H^{1\mathrm{D}} ,\mathcal{C}^{1\mathrm{D}} ) $ has domain $\mathcal{D}(H^{1\mathrm{D}}) $ contained in $\mathcal{D}_q(H^{1\mathrm{D}})$.
According to the trace theorem~\cite[Section 5.5]{Evans}, there exists a unique bounded linear operator 
\begin{equation}\label{traceop}
T: H^1(\mathbb{R}^{N} ) \to \bigoplus _{1\le j<l \le N } L^2\left(\left\{(x_1,\dots,x_N) \in \mathbb{R}^N: x_j=x_l \right\}\right)
\end{equation}
such that $ T \varphi = (\varphi|_{x_j=x_l})_{j<l} $ if $\varphi \in H^1(\mathbb{R}^N) \cap \mathcal{C}(\mathbb{R}^N) $.
A function $\varphi$ in $H^1(\mathbb{R}^N) $ has zero trace on the diagonals $\mathbb{D}^{1\mathrm{D}}$, i.e. $T\varphi =0$, if and only if it is in $H^1_0(\mathbb{R}^N\backslash \mathbb{D}^{1\mathrm{D}})$.

\subsection{Hardy inequalities for 2D anyons}

An important ingredient of our proof below is that the magnetic-gauge picture 2D Hamiltonian defined above controls the inverse square distance between any pair of particles. Such Hardy-type inequalities were investigated in~\cite{HofLapTid-08,LunSol-13a,LunSol-13b,LarLun-16,LunQva-20}, and we recall some of the known results here.

We first consider the case where two particles out of $N$ are isolated from the others. A Hardy inequality for any $\alpha \neq 0$ follows from~\cite{LunSol-13a} in this case. We next deal with the general case to obtain an inequality for any $\alpha \neq 0$ by combining the isolated case with estimates from~\cite{HofLapTid-08}.

\begin{theorem}[\textbf{Isolated two-anyon Hardy}]\label{isolatedHineq}\mbox{}\\
Let $E$ be an open subset of $\mathbb{R}^{2(N-1)}$, and let $\gamma \in (0,\infty]$ be a constant. 
Then for any $\phi \in L^2
_{sym}(\mathbb{R}^{2N})$ such that $|D_1 \phi| , |D_2 \phi| \in L^2(\mathbb{R}^{2N})$, we have
\begin{equation}\label{isolatedH}
\int_{E_{\gamma}} \LR{ \absolutevalue{D_1 \phi}^2 +\absolutevalue{D_2 \phi}^2 } \ge  C_{\alpha}  \int_{E_{\gamma}} \frac{\absolutevalue{\phi}^2}{\absolutevalue{\mathbf{x}_1-\mathbf{x}_2}^2}
\end{equation}
with 
\begin{multline*} \label{Ehdef}
    E_\gamma \colonequals \Big\{ \left(\mathbf{x}_1,\dots,\mathbf{x}_N\right) \in \mathbb{R}^{2N}: \left(\mathbf{x}_1+\mathbf{x}_2, \mathbf{x}_3,\dots,\mathbf{x}_N\right) \in E, \\
    \left|{\mathbf{x}_1-\mathbf{x}_2}\right| < \min \left\{ \gamma, \vert\mathbf{x}_1+\mathbf{x}_2 - 2 \mathbf{x}_3\vert, \dots, \vert\mathbf{x}_1+\mathbf{x}_2 - 2 \mathbf{x}_N\vert \right\}  \Big\}
\end{multline*}
and  
\begin{equation}\label{eq:Calpha}
C_{\alpha} \colonequals { 2 \min_{q\in\mathbb{Z}} \absolutevalue{\alpha-2q}^2 } \in (0,2). 
\end{equation}
\end{theorem}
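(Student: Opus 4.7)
The plan is to reduce to a two-body problem in relative coordinates and then invoke the standard Aharonov--Bohm Hardy inequality; the bosonic exchange symmetry will be the ingredient responsible for the sharp constant $C_\alpha$ in~\eqref{eq:Calpha}. First, I would introduce the change of variables $\mathbf{R} = (\mathbf{x}_1+\mathbf{x}_2)/2$ and $\br = \mathbf{x}_1-\mathbf{x}_2$. Combining the pointwise identity $|\mathbf{u}|^2+|\mathbf{v}|^2 = \tfrac12|\mathbf{u}+\mathbf{v}|^2 + \tfrac12|\mathbf{u}-\mathbf{v}|^2$ with $\grad_{\mathbf{x}_1}+\grad_{\mathbf{x}_2} = \grad_{\mathbf{R}}$ and $\grad_{\mathbf{x}_1}-\grad_{\mathbf{x}_2} = 2\grad_{\br}$ rewrites the kinetic density as
\[
|D_1\phi|^2+|D_2\phi|^2 = \tfrac12\bigl|(-\im\grad_{\mathbf{R}}+\alpha(\mathbf{A}_1+\mathbf{A}_2))\phi\bigr|^2 + 2\bigl|(-\im\grad_{\br}+\alpha\mathbf{C})\phi\bigr|^2,
\]
where $\mathbf{C}=(\mathbf{A}_1-\mathbf{A}_2)/2$. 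Discarding the non-negative center-of-mass term reduces the problem to bounding the relative piece from below.

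Next, isolate the singular part of $\mathbf{C}$ by writing
\[
\mathbf{C} = \frac{\br^\perp}{|\br|^2} + \tilde{\mathbf{C}}, \quad \tilde{\mathbf{C}} = \frac12\sum_{k\geq 3}\left[\frac{(\mathbf{x}_1-\mathbf{x}_k)^\perp}{|\mathbf{x}_1-\mathbf{x}_k|^2} - \frac{(\mathbf{x}_2-\mathbf{x}_k)^\perp}{|\mathbf{x}_2-\mathbf{x}_k|^2}\right],
\]
and gauge away the smooth remainder. A direct computation in the new variables gives $\tilde{\mathbf{C}} = \grad_\br \chi$ with $\chi = \sum_{k\geq 3}\bigl[\arg(\mathbf{x}_1-\mathbf{x}_k)+\arg(\mathbf{x}_2-\mathbf{x}_k)\bigr]$, a fixed branch of each $\arg$ being chosen. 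The geometric condition built into $E_\gamma$, namely $|\br| < |\mathbf{x}_1+\mathbf{x}_2-2\mathbf{x}_k|$ for $k\ge 3$, is exactly what guarantees that, with $\mathbf{R},\mathbf{x}_3,\dots,\mathbf{x}_N$ frozen, neither $\mathbf{x}_1$ nor $\mathbf{x}_2$ winds around any $\mathbf{x}_k$ as $\br$ ranges over its admissible disk, so $\chi$ is single-valued there. The standard gauge identity then reads
\[
\bigl(-\im\grad_\br+\alpha\mathbf{C}\bigr)\phi = e^{-\im\alpha\chi}\bigl(-\im\grad_\br + \alpha\br^\perp/|\br|^2\bigr)\bigl(e^{\im\alpha\chi}\phi\bigr).
\]

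Finally, set $\psi := e^{\im\alpha\chi}\phi$, freeze $\mathbf{R},\mathbf{x}_3,\dots,\mathbf{x}_N$, and apply the standard Aharonov--Bohm Hardy inequality in the single planar variable $\br$ on the disk $\{|\br|<\min(\gamma, |2\mathbf{R}-2\mathbf{x}_k|: k\ge 3)\}$. The crucial observation is that swapping $\br\mapsto -\br$ is exactly swapping $\mathbf{x}_1\leftrightarrow\mathbf{x}_2$, so $\chi$ is invariant under this involution and $\psi$ inherits the bosonic symmetry $\psi(\mathbf{R},-\br,\dots)=\psi(\mathbf{R},\br,\dots)$. Expanding $\psi$ in angular Fourier modes on the slice disk, only even modes $n=2q$ survive, and the usual mode-by-mode 1D Hardy estimate produces the lower bound $\min_{q\in\mathbb{Z}}|\alpha-2q|^2 \int |\psi|^2/|\br|^2$. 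Reinstating the factor $2$ from Step~1, noting $|\psi|=|\phi|$, and integrating out the frozen variables by Fubini yields the claimed constant $C_\alpha = 2\min_q|\alpha-2q|^2$.

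The main technical obstacle lies in Step~2: constructing $\chi$ as a genuine single-valued smooth object on $E_\gamma$ and justifying the gauge identity, despite the fact that each individual $\arg$ is defined only up to $2\pi$. This is precisely where the specific geometric structure of $E_\gamma$ is indispensable: without the isolation bound $|\br| < |\mathbf{x}_1+\mathbf{x}_2-2\mathbf{x}_k|$, the phase $e^{\im\alpha\chi}$ would generically be multi-valued for $\alpha\notin\mathbb{Z}$, and the pair $(\mathbf{x}_1,\mathbf{x}_2)$ could not be cleanly decoupled from the surrounding anyons.
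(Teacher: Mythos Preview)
Your proof is correct and follows essentially the same route as the paper: pass to center-of-mass/relative coordinates, discard the center-of-mass piece via the parallelogram identity, observe that on the slice disk the relative potential is curl-free with circulation $2\pi\alpha$ and antipodal-antisymmetric, and deduce the Hardy bound with constant $\min_{q}|\alpha-2q|^2$ using the bosonic evenness in $\br$. The only presentational difference is that the paper invokes \cite[Lemma~2]{LunSol-13a} as a black box for the last step, whereas you explicitly perform the gauge transformation $\psi=e^{\im\alpha\chi}\phi$ to reduce to the pure Aharonov--Bohm potential before doing the angular-mode expansion; this is precisely the content of that lemma, so the two arguments coincide.
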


\begin{proof}
By a density argument, we may work with a smooth $\phi$ without loss of generality. The following proof imitates that of \cite[Theorem~4]{LunSol-13a}.

We parameterize the variables $\mathbf{x}_1$ and $\mathbf{x}_2$ by the center-of-mass coordinate 
$$
\mathbf{R}\colonequals \frac{\mathbf{x}_1+\mathbf{x}_2}{2}
$$ 
and the relative coordinate 
$$
\mathbf{r}\colonequals \mathbf{x}_1-\mathbf{x}_2.
$$
A direct calculation yields
\begin{equation*}
    \LR{D_1-D_2}\phi =2 \LR{-\mathrm{i}\grad_{\mathbf{r}} +  \mathbf{a}(\mathbf{r};\mathbf{R},\mathbf{x}_3,\dots,\mathbf{x}_N) } \phi,
\end{equation*}
where the vector potential 
$$\mathbf{a} (\mathbf{r};\mathbf{R},\mathbf{x}_3,\dots,\mathbf{x}_N) \colonequals \alpha\frac{\mathbf{r}^{\perp}}{\absolutevalue{\mathbf{r}}^2} + \alpha \sum_{k=3}^N \LR{\frac{\LR{\mathbf{r} + 2(\mathbf{R}-\mathbf{x}_k)}^{\perp}}{\absolutevalue{\mathbf{r} + 2(\mathbf{R}-\mathbf{x}_k)}^2} + \frac{\LR{\mathbf{r} - 2(\mathbf{R}-\mathbf{x}_k)}^{\perp}}{\absolutevalue{\mathbf{r} - 2(\mathbf{R}-\mathbf{x}_k)}^2}}$$ is antipodal-antisymmetric with respect to $\mathbf{r}$.
By definition of $E_{\gamma}$, we have
$$ \abs{\mathbf{r}} < 2 \abs{\mathbf{R}-\mathbf{x}_k}, \quad \forall k\ge 3. $$
Hence, $$\grad_{\mathbf{r}} \wedge \mathbf{a} = 0 \quad \text{on} \quad  E_{\gamma}\backslash\{ \mathbf{0} \} $$ and $$\int_{\Gamma}\mathbf{a}\cdot \differential \mathbf{r} = 2\pi \alpha $$
for any simple loop $\Gamma$ in $E_{\gamma}\backslash\{ \mathbf{0}\}$ enclosing $\{ \mathbf{0} \}$. 
Applying \cite[Lemma~2]{LunSol-13a} to $\phi$, we obtain
\begin{equation}\label{afterLemma2}
    \int_{\Omega(\mathbf{R}, \mathbf{x}_3,\dots,\mathbf{x}_N) } \absolutevalue{\LR{D_1-D_2}\phi}^2 \differential \mathbf{r} \ge 2 C_{\alpha}\int_{\Omega(\mathbf{R}, \mathbf{x}_3,\dots,\mathbf{x}_N) } \frac{\absolutevalue{\phi}^2 }{\absolutevalue{\mathbf{r}}^2}\differential \mathbf{r},
\end{equation}
where 
$$\Omega(\mathbf{R}, \mathbf{x}_3,\dots,\mathbf{x}_N) \colonequals \left\{ \mathbf{r} \in \mathbb{R}^2 : \abs{\mathbf{r}} \in \left(0, \min \{ \gamma, 2 \vert\mathbf{R} - \mathbf{x}_3\vert, \dots, 2 \vert\mathbf{R} - \mathbf{x}_N\vert \} \right) \right\}. $$ 
Integrating the estimate~\eqref{afterLemma2} over $E$ on both sides gives
\begin{equation*}\label{Ghardyeq1}
    \int_{E_{\gamma}} \absolutevalue{\LR{D_1-D_2}\phi}^2  \ge 2 C_{\alpha}\int_{E_{\gamma}} \frac{\absolutevalue{\phi}^2 }{|{\mathbf{x}_1-\mathbf{x}_2}|^2}.
\end{equation*}
Using the fact that $$2\left(\absolutevalue{D_1 \phi}^2 +\absolutevalue{D_2 \phi}^2\right) = \absolutevalue{\LR{D_1-D_2}\phi}^2 + \absolutevalue{\LR{D_1+D_2}\phi}^2,$$ 
we finally get the isolated Hardy inequality~\eqref{isolatedH}.
\end{proof}

To estimate the integral on the complementary domain of the isolated two-anyon domain $E_{\gamma}$ defined in Theorem \ref{isolatedHineq}, we need the following lemma, which follows immediately from~\cite[Lemma~3.6]{HofLapTid-08}:

\begin{lemma}[\textbf{Three-particle Hardy}]\label{3Hardy}\mbox{}\\
Let $\phi \in H^1(\mathbb{R}^{2N})$, then for any integer $k\in [3,N]$, we have 
\begin{equation*}
    \int_{\mathbb{R}^{2N}} \LR{ \absolutevalue{\grad_{\mathbf{x}_1}{\phi}}^2 + \absolutevalue{\grad_{\mathbf{x}_2}{\phi}}^2 + \absolutevalue{\grad_{\mathbf{x}_k}{\phi}}^2 } \ge 3 \int_{\mathbb{R}^{2N}}   \frac{\absolutevalue{\phi}^2}{\rho_k^2},
\end{equation*}
where $$\rho_k^2 \colonequals \absolutevalue{\mathbf{x}_1 - \mathbf{x}_2}^2 + \absolutevalue{\mathbf{x}_1 - \mathbf{x}_k}^2 + \absolutevalue{\mathbf{x}_2 - \mathbf{x}_k}^2 .$$
\end{lemma}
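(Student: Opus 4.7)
The plan is to reduce the inequality to the classical Hardy inequality in $\mathbb{R}^4$ applied to the relative coordinates of the three particles $\mathbf{x}_1,\mathbf{x}_2,\mathbf{x}_k$, since three particles in the plane carry a four-dimensional relative configuration space, and dimension four is precisely the value for which the classical Hardy constant equals $1$ (so that $((d-2)/2)^2 = 1$).

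First, by density it suffices to work with $\phi \in \mathcal{C}_c^\infty(\mathbb{R}^{2N})$, and by Fubini we may freeze the $2(N-3)$ coordinates $\mathbf{x}_j$ with $j \notin \{1,2,k\}$ and prove the pointwise (in these frozen variables) inequality on $\mathbb{R}^6$, then integrate back. So we focus on a function of $(\mathbf{x}_1,\mathbf{x}_2,\mathbf{x}_k) \in \mathbb{R}^6$.

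Next, introduce normalized Jacobi coordinates
\begin{equation*}
\boldsymbol{\xi}_1 \colonequals \frac{1}{\sqrt{2}}(\mathbf{x}_1 - \mathbf{x}_2), \quad \boldsymbol{\xi}_2 \colonequals \sqrt{\tfrac{2}{3}}\LR{\mathbf{x}_k - \tfrac{\mathbf{x}_1+\mathbf{x}_2}{2}}, \quad \boldsymbol{\xi}_3 \colonequals \tfrac{1}{\sqrt{3}}(\mathbf{x}_1+\mathbf{x}_2+\mathbf{x}_k).
\end{equation*}
The linear map $(\mathbf{x}_1,\mathbf{x}_2,\mathbf{x}_k) \mapsto (\boldsymbol{\xi}_1,\boldsymbol{\xi}_2,\boldsymbol{\xi}_3)$ is orthogonal, so it preserves both the Lebesgue measure and the sum-of-squares of gradients:
\begin{equation*}
|\nabla_{\mathbf{x}_1}\phi|^2 + |\nabla_{\mathbf{x}_2}\phi|^2 + |\nabla_{\mathbf{x}_k}\phi|^2 = |\nabla_{\boldsymbol{\xi}_1}\phi|^2 + |\nabla_{\boldsymbol{\xi}_2}\phi|^2 + |\nabla_{\boldsymbol{\xi}_3}\phi|^2.
\end{equation*}
A direct calculation rewrites the denominator as
\begin{equation*}
\rho_k^2 = \tfrac{3}{2}|\mathbf{x}_1 - \mathbf{x}_2|^2 + 2 \bigl|\mathbf{x}_k - \tfrac{\mathbf{x}_1+\mathbf{x}_2}{2}\bigr|^2 = 3\bigl(|\boldsymbol{\xi}_1|^2 + |\boldsymbol{\xi}_2|^2\bigr).
\end{equation*}

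Finally, discard the non-negative term $|\nabla_{\boldsymbol{\xi}_3}\phi|^2$ and, for fixed $\boldsymbol{\xi}_3$, apply the classical Hardy inequality on $\mathbb{R}^4 \ni (\boldsymbol{\xi}_1,\boldsymbol{\xi}_2)$:
\begin{equation*}
\int_{\mathbb{R}^4} \bigl(|\nabla_{\boldsymbol{\xi}_1}\phi|^2 + |\nabla_{\boldsymbol{\xi}_2}\phi|^2\bigr) \, \d\boldsymbol{\xi}_1 \d\boldsymbol{\xi}_2 \ge \int_{\mathbb{R}^4} \frac{|\phi|^2}{|(\boldsymbol{\xi}_1,\boldsymbol{\xi}_2)|^2} \, \d\boldsymbol{\xi}_1 \d\boldsymbol{\xi}_2 = \int_{\mathbb{R}^4} \frac{3|\phi|^2}{\rho_k^2} \, \d\boldsymbol{\xi}_1 \d\boldsymbol{\xi}_2.
\end{equation*}
Integrating over $\boldsymbol{\xi}_3$ and the remaining $\mathbf{x}_j$'s and undoing the orthogonal change of variables yields the claim. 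There is no genuine obstacle: the only thing to be careful about is the normalization of the Jacobi coordinates so that both the identities for $\rho_k^2$ and the invariance of the kinetic form hold without stray constants, and the dimension count that pins down the Hardy constant to exactly $1$ in $\mathbb{R}^4$, which is what generates the sharp factor $3$ in the statement.
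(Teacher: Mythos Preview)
Your proof is correct. The paper does not give its own proof of this lemma but simply cites~\cite[Lemma~3.6]{HofLapTid-08}; your argument via orthonormal Jacobi coordinates and the classical Hardy inequality in $\mathbb{R}^4$ (with sharp constant $((4-2)/2)^2 = 1$) is precisely the standard proof of that result, so you have effectively reproduced the referenced argument.
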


\begin{remark}[Three-particle Hardy for anyons]\mbox{}\\
With the help of the diamagnetic inequality from~\cite[Lemma 4]{LunSol-14}, $ |\phi|$ is in $H^1(\mathbb{R}^{2N})$ provided that $\phi \in L^2_{\mathrm{sym}}(\mathbb{R}^{2N})$ and $\forall j, \, |D_j \phi| \in L^2(\mathbb{R}^{2N})$. 
Applying Lemma \ref{3Hardy} above to such $\abs{\phi}$ and using the diamagnetic inequality, yields
\begin{equation}
    \int_{\mathbb{R}^{2N}} \LR{\absolutevalue{D_1\phi}^2 + \absolutevalue{D_2\phi}^2 +\absolutevalue{D_k \phi}^2 } \ge 3 \int_{\mathbb{R}^{2N}}   \frac{\absolutevalue{\phi}^2}{\rho_k^2}  . \label{est3body}
\end{equation}
\end{remark}

The above gives a control on three-particle encounters that we may combine with Theorem~\ref{isolatedHineq} to prove a many-anyons Hardy inequality on the whole configuration space with fewer restrictions on $\alpha$ than in~\cite[Theorem~4]{LunSol-13a}.

\begin{theorem}[\textbf{Many-anyon Hardy for $\alpha \in (0,2)$}] \label{0to2hardy}\mbox{}\\
Let $\phi \in L^2_{\mathrm{sym}}(\mathbb{R}^{2N})$ such that $\forall j, \, |D_j \phi| \in L^2(\mathbb{R}^{2N})$, then we have
\begin{equation*}
    \sum_{k=1}^N \int_{\mathbb{R}^{2N}} \absolutevalue{D_k\phi}^2 \ge \frac{2C_{\alpha}}{(N-1)\LR{2+3(N-2)C_{\alpha}}} \sum_{1\le j <l \le N}\int_{\mathbb{R}^{2N}} \frac{\absolutevalue{\phi}^2}{|\mathbf{x}_j - \mathbf{x}_l|^2}. \label{noRalpha}
\end{equation*}
with $C_\alpha$ as in~\eqref{eq:Calpha}.
\end{theorem}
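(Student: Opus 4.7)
The strategy is to partition configuration space, for each pair $(j,l)$, into an ``isolated pair'' region where Theorem~\ref{isolatedHineq} applies directly, and a complementary ``close triple'' region where Lemma~\ref{3Hardy} (upgraded via the diamagnetic inequality as in~\eqref{est3body}) can be used. For each pair $j<l$ I would define
\[
E_{j,l} := \left\{ |\mathbf{x}_j - \mathbf{x}_l| < |\mathbf{x}_j + \mathbf{x}_l - 2 \mathbf{x}_k| \text{ for every } k\neq j,l \right\},
\]
and write its complement as a disjoint union $E_{j,l}^{c} = \bigsqcup_{k\neq j,l} B_{j,l,k}$, where $B_{j,l,k}$ collects the configurations where $k$ is, say, the smallest index for which the isolation condition fails.

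On $E_{j,l}$, I would apply Theorem~\ref{isolatedHineq} with $\gamma = \infty$ and $E = \mathbb{R}^{2(N-1)}$, after relabeling the indices (permissible by the bosonic symmetry of $\phi$ and the invariance of the magnetic energy under particle permutations), to obtain
\[
\int_{E_{j,l}} \frac{|\phi|^2}{|\mathbf{x}_j - \mathbf{x}_l|^2} \leq \frac{1}{C_\alpha} \int_{\mathbb{R}^{2N}} \bigl( |D_j \phi|^2 + |D_l \phi|^2 \bigr).
\]
On $B_{j,l,k}$, the defining inequality combined with the triangle inequality gives $|\mathbf{x}_j - \mathbf{x}_k|, |\mathbf{x}_l - \mathbf{x}_k| \leq |\mathbf{x}_j - \mathbf{x}_l|$, hence $\rho_k^2 \leq 3|\mathbf{x}_j - \mathbf{x}_l|^2$. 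Inserting this into~\eqref{est3body} applied to the triple $(j,l,k)$ and summing over $k\neq j,l$ yields
\[
\int_{E_{j,l}^{c}} \frac{|\phi|^2}{|\mathbf{x}_j - \mathbf{x}_l|^2} \leq (N-3)\int_{\mathbb{R}^{2N}} \bigl( |D_j \phi|^2 + |D_l \phi|^2 \bigr) + \sum_{k=1}^N \int_{\mathbb{R}^{2N}} |D_k \phi|^2.
\]

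Adding the two bounds and summing over the $\binom{N}{2}$ pairs $j<l$, each $\int|D_k\phi|^2$ is counted $N-1$ times as a ``member of a pair'' and $\binom{N}{2}$ times as a ``triple'' contribution. Setting $T := \sum_k \int |D_k \phi|^2$, the arithmetic collapses to
\[
\sum_{j<l} \int \frac{|\phi|^2}{|\mathbf{x}_j - \mathbf{x}_l|^2} \leq (N-1)\left( \frac{1}{C_\alpha} + \frac{3(N-2)}{2} \right) T = \frac{(N-1)\bigl(2 + 3(N-2)C_\alpha\bigr)}{2 C_\alpha}\, T,
\]
which is the announced inequality.

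The main obstacle, more bookkeeping than analytical, is verifying the factor $3$ from the triangle inequality and keeping track of the multiplicity with which $\int|D_k\phi|^2$ appears in the various sums; the genuinely delicate input (the magnetic Hardy constant $C_\alpha$ derived from the singular flux at coinciding particles) is already encapsulated in Theorem~\ref{isolatedHineq}. A minor point to confirm is that applying Theorem~\ref{isolatedHineq} to a generic pair $(j,l)$ is legitimate: the symmetry of $\phi$ and the fact that the vector potential $\mathbf{A}_j$ only depends on the set $\{\mathbf{x}_k\}$ make the proof there permutation-covariant.
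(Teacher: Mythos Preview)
Your proposal is correct and follows essentially the same route as the paper: split configuration space for each pair into the isolated region $E_{j,l}$ (where Theorem~\ref{isolatedHineq} with $\gamma=\infty$, $E=\mathbb{R}^{2(N-1)}$ applies) and its complement (handled via the three-particle bound~\eqref{est3body} after the geometric observation $\rho_k^2\le 3|\mathbf{x}_j-\mathbf{x}_l|^2$), then sum over pairs. The paper phrases this for the fixed pair $(1,2)$ and invokes symmetry at the end, and uses the non-disjoint covering $\bigcup_{k\ge 3} F_k$ rather than your disjoint $\bigsqcup_k B_{j,l,k}$, but these are cosmetic differences; your bookkeeping and the resulting constant agree exactly with the paper's.
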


Observe that the constant in the above scales as $N^{-2}$ for large $N$, which is worst than the optimal $N^{-1}$ obtained in~\cite[Theorem~4]{LunSol-13a} for special values of $\alpha$. We shall work at a fixed $N$ in the sequel.

\begin{proof}
With the same notation as in Theorem \ref{isolatedHineq}, we let $\gamma$ be $\infty$ and let $E$ be $\mathbb{R}^{2(N-1)}$.
It is clear that $$\mathbb{R}^{2N} \backslash E_{\infty} = \bigcup_{k=3}^N F_k, $$  
where
$$ F_k \colonequals \left\{(\mathbf{x}_1,\dots,\mathbf{x}_N) \in \mathbb{R}^{2N}: \absolutevalue{\mathbf{x}_1 - \mathbf{x}_2} \ge \absolutevalue{\mathbf{x}_1+\mathbf{x}_2 - 2 \mathbf{x}_k}  \right\}. $$
Hence, for $(\mathbf{x}_1,\dots,\mathbf{x}_N) \in F_k $, we have $$\absolutevalue{\mathbf{x}_1 -\mathbf{x}_2 }  \ge \max\left\{\absolutevalue{\mathbf{x}_1 -\mathbf{x}_k }, \absolutevalue{\mathbf{x}_2 -\mathbf{x}_k }\right\} ,$$ 
and then 
$$\rho^2_k \le 3\absolutevalue{\mathbf{x}_1 -\mathbf{x}_2 }^2,$$ 
where $\rho_k^2$ is defined in Lemma \ref{3Hardy}. 
Therefore, with the help of the estimate (\ref{est3body}), it turns out that
\begin{multline}\label{after2.1}
    \sum_{k=3}^N \int_{\mathbb{R}^{2N}} \LR{\absolutevalue{D_1\phi}^2 + \absolutevalue{D_2\phi}^2 +\absolutevalue{D_k \phi}^2 }\ge  \sum_{k=3}^N  3 \int_{\mathbb{R}^{2N}}   \frac{\absolutevalue{\phi}^2}{\rho_k^2}
    \ge \sum_{k=3}^N 3\int_{F_k}\frac{\absolutevalue{\phi}^2}{\rho_k^2}\\
    \ge \sum_{k=3}^N \int_{F_k}\frac{\absolutevalue{\phi}^2}{|{\mathbf{x}_1 - \mathbf{x}_2}|^2} 
    \ge \int_{\mathbb{R}^{2N}\backslash E_{\infty}} \frac{\absolutevalue{\phi}^2}{|{\mathbf{x}_1 - \mathbf{x}_2}|^2} .
\end{multline}
Combining (\ref{isolatedH}) with (\ref{after2.1}), we obtain 
\begin{equation}\label{for12}
   \LR{N-2+C_{\alpha}^{-1}} \int_{\mathbb{R}^{2N}} \LR{ \absolutevalue{D_1\phi}^2 + \absolutevalue{D_2\phi}^2 }  +   { \sum_{k=3}^N \int_{\mathbb{R}^{2N}} \absolutevalue{D_k\phi}^2} \ge \int_{\mathbb{R}^{2N}} \frac{\absolutevalue{\phi}^2}{|{\mathbf{x}_1 - \mathbf{x}_2}|^2}. 
\end{equation}
Since $\phi$ is symmetric, the estimate (\ref{for12}) above implies
\begin{equation*}\label{forjl}
   \LR{N-2+C_{\alpha}^{-1}} \int_{\mathbb{R}^{2N}} \LR{ \absolutevalue{D_j\phi}^2 + \absolutevalue{D_l\phi}^2 }  +   {\sum_{1\le k \le N,\, k\neq j,l} \int_{\mathbb{R}^{2N}} \absolutevalue{D_k\phi}^2} \ge \int_{\mathbb{R}^{2N}} \frac{\absolutevalue{\phi}^2}{|\mathbf{x}_j - \mathbf{x}_l|^2}, \quad j \neq l.
\end{equation*}
Summing over all possible pairs, we conclude the proof of Theorem \ref{0to2hardy}.
\end{proof}

One application of the above is to clarify the quadratic domain of the 2D magnetic-gauge operator: it must be contained in the usual $H^1$ space.

\begin{cor}[\textbf{$H^1$-regularity}]\label{cptD}\mbox{}\\
Consider the quadratic form domain of the harmonic oscillator
\begin{equation*}
    H^1_V (\mathbb{R}^{2N}) \colonequals \left\{ \phi \in H^1(\mathbb{R}^{2N}) : \forall j , |{\mathbf{x}_j}|\phi \in L^2(\mathbb{R}^{2N}) \right\}.
\end{equation*}
Then the domain of the quadratic form of $H^{2\mathrm{D}}_{\varepsilon} $, $\mathcal{D}_q(H^{2\mathrm{D}}_{\varepsilon}) $, is contained in $ H^1_{V_{}}(\mathbb{R}^{2N})$.
\end{cor}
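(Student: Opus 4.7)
The plan is to exploit the pointwise identity $-\mathrm{i}\grad_{\mathbf{x}_j}\phi = D_j\phi - \alpha\mathbf{A}_j\phi$ together with the many-anyon Hardy bound from Theorem~\ref{0to2hardy} to trade the magnetic gradient $D_j$ for the ordinary gradient $\grad_{\mathbf{x}_j}$. The confining part is easy: since $V_\varepsilon(\mathbf{x}_j) = x_j^2 + \varepsilon^{-2}y_j^2 \ge \min(1,\varepsilon^{-2})|\mathbf{x}_j|^2$, the condition $V_\varepsilon^{1/2}\phi\in L^2$ built into $\mathcal{D}_q(H^{2\mathrm{D}}_\varepsilon)$ directly gives $|\mathbf{x}_j|\phi\in L^2$ for each $j$.

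First, I would work on the core $\mathcal{C}^{2\mathrm{D}}$, where everything is smooth and away from the diagonals, so that $\mathbf{A}_j\phi$ is a bona fide $L^2$ function. From the triangle inequality for $\mathbf{A}_j = \sum_{k\neq j}(\mathbf{x}_j-\mathbf{x}_k)^\perp/|\mathbf{x}_j-\mathbf{x}_k|^2$ and Cauchy--Schwarz in the sum over $k$,
\begin{equation*}
|\mathbf{A}_j\phi|^2 \le (N-1)\sum_{k\neq j} \frac{|\phi|^2}{|\mathbf{x}_j-\mathbf{x}_k|^2}.
\end{equation*}
Summing over $j$ gives $\sum_j\int|\mathbf{A}_j\phi|^2 \le 2(N-1)\sum_{j<l}\int|\phi|^2/|\mathbf{x}_j-\mathbf{x}_l|^2$, and Theorem~\ref{0to2hardy} then bounds the right-hand side by a constant (depending on $N$ and $\alpha\in(0,2)$) times $\sum_k\int|D_k\phi|^2 \le \mathcal{E}^{2\mathrm{D}}_\varepsilon(\phi)$. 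Combined with $\grad_{\mathbf{x}_j}\phi = \mathrm{i} D_j\phi - \mathrm{i}\alpha\mathbf{A}_j\phi$, this yields a uniform estimate
\begin{equation*}
\sum_{j=1}^N \int_{\R^{2N}}|\grad_{\mathbf{x}_j}\phi|^2 \le C_{N,\alpha}\,\mathcal{E}^{2\mathrm{D}}_\varepsilon(\phi), \qquad \phi\in\mathcal{C}^{2\mathrm{D}}.
\end{equation*}

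To conclude, I extend by density. Given $\phi\in\mathcal{D}_q(H^{2\mathrm{D}}_\varepsilon)$, pick a sequence $\phi_n\in\mathcal{C}^{2\mathrm{D}}$ that is Cauchy for the form norm $\sqrt{\mathcal{E}^{2\mathrm{D}}_\varepsilon(\cdot)}+\|\cdot\|_{L^2}$ and converges to $\phi$ in $L^2$. Applying the estimate above to differences $\phi_n-\phi_m$ shows that $(\phi_n)$ is Cauchy in $H^1(\R^{2N})$, so $\phi\in H^1$; similarly $|\mathbf{x}_j|\phi_n\to|\mathbf{x}_j|\phi$ in $L^2$ by the harmonic part. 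Hence $\phi\in H^1_V(\R^{2N})$.

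The only subtlety I expect is verifying that Theorem~\ref{0to2hardy} is applicable at the level of the smooth approximants (which it is, since $\mathcal{C}^{2\mathrm{D}}\subset L^2_{\mathrm{sym}}(\R^{2N})$ with $D_j\phi\in L^2$) and that the constant it produces is independent of $\phi$, so that the density step passes without loss. The restriction $\alpha\in(0,2)$, i.e. $C_\alpha>0$, is crucial here; for bosonic $\alpha=0$ the identity $D_j=-\mathrm{i}\grad_{\mathbf{x}_j}$ makes the statement trivial anyway.
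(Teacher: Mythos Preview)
Your proof is correct and follows essentially the same approach as the paper: bound $|\mathbf{A}_j\phi|^2 \le (N-1)\sum_{k\neq j}|\phi|^2/|\mathbf{x}_j-\mathbf{x}_k|^2$ via Cauchy--Schwarz, invoke the many-anyon Hardy inequality (Theorem~\ref{0to2hardy}) to get $\mathbf{A}_j\phi\in L^2$, and conclude from $\grad_{\mathbf{x}_j}\phi = \mathrm{i}(D_j\phi - \alpha\mathbf{A}_j\phi)$. The only difference is that the paper applies Theorem~\ref{0to2hardy} directly to an arbitrary $\phi\in\mathcal{D}_q(H^{2\mathrm{D}}_\varepsilon)$---which is legitimate since that theorem is stated for all symmetric $\phi$ with $D_j\phi\in L^2$---so your density detour through $\mathcal{C}^{2\mathrm{D}}$, while harmless, is not needed.
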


\begin{proof}
    Let $\phi$ be in $\mathcal{D}_q(H^{2\mathrm{D}}_{\varepsilon})  $, then $\phi \in L^2_{\mathrm{sym}}(\mathbb{R}^{2N})$ and $|D_j \phi| \in L^2(\mathbb{R}^{2N})$ for all $j$. 
    Using the Cauchy-Schwarz inequality and applying Theorem \ref{0to2hardy} to $\phi$, we have
    \begin{equation*}
        \int_{\mathbb{R}^{2N}} \abs{\mathbf{A}_j\phi}^2 \le C_N \sum_{1\le k \le N,\, k\neq j}  \int_{\mathbb{R}^{2N}} \frac{\absolutevalue{\phi}^2}{|\mathbf{x}_k-\mathbf{x}_j|^2}< \infty,
    \end{equation*}
    which implies that $|{\mathbf{A}_j}\phi|\in L^2(\mathbb{R}^{2N}) $. 
    Since $|D_j \phi| $ is also in $L^2(\mathbb{R}^{2N})  $, $$\left|\grad_{\bx_j}\phi \right| = \left| \mathrm{i}\LR{D_j \phi - \alpha \mathbf{A}_j \phi} \right| $$
    is clearly in $L^2(\mathbb{R}^{2N}) $ for all $j$,  which concludes the proof.
\end{proof}

\subsection{Eigenvalues and eigenfunctions}

\subsubsection{1D model}

The Tonks-Girardeau gas is exactly soluble via Bose/Fermi mapping~\cite{Girardeau-60,MinVig-22,MisEtalZin-22}. This means that there is a one-to-one correspondence between its eigenfunctions and those of the free Fermi gas (Slater determinants). Any eigenfunction $\psione_k$ of $H^{1\mathrm{D}}$ is of the form
$$ \psi_k^{1\mathrm{D}} (x_1,\dots,x_N) = c_k \prod_{i<j} \mathrm{sgn} (x_i-x_j) \underset{i,j}{\det} (v_i (x_j)),$$
where $v_1,\ldots,v_N$ are eigenfunctions of the one-particle harmonic oscillator
$$-\partial_x + x^2.$$
The corresponding eigenvalue $\lone_k$ is the sum of the one-particle eigenvalues corresponding to $v_1,\ldots,v_N$.
With this correspondence, we know that the eigenfunctions of $H^{1\mathrm{D}} $ are all of the form 
\begin{equation*}\label{defEF1D}
    \psi^{1\mathrm{D}}_k(x_1,\dots,x_N) = p_k(x_1,\dots,x_N) \prod_{1\le j < l \le N}\absolutevalue{x_j-x_l}e^{-(x_1^2+\cdots+x_N^2)/2}
\end{equation*}
for some polynomial $p_k$, and that they form an orthonormal basis of $L^2(\mathbb{R}^N)$.

\subsubsection{2D model}

As regards the eigenfunctions of $H^{2\mathrm{D}}_{\varepsilon} $,  we prove that

\begin{prop}[\textbf{Diagonalization of $H^{2\mathrm{D}}_{\varepsilon}$ }]\mbox{}\\
    There exists an orthonormal basis $(\Psi_k^{2\mathrm{D}})_{k\in\mathbb{N}^*}$ of $L^2_{\mathrm{sym}}(\mathbb{R}^{2N})$, and a non-decreasing divergent sequence $(\lambda^{2\mathrm{D}}_k)_{k\in\mathbb{N}^*}$ such that 
    \begin{equation*}
        H^{2\mathrm{D}}_{\varepsilon} \Psi_k^{2\mathrm{D}} = \lambda^{2\mathrm{D}}_k \Psi_k^{2\mathrm{D}}.
    \end{equation*}
\end{prop}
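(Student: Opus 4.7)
The plan is to show that $H^{2\mathrm{D}}_{\varepsilon}$ has compact resolvent; the conclusion then follows from the standard spectral theorem for bounded-below self-adjoint operators with compact resolvent. Since $H^{2\mathrm{D}}_{\varepsilon}$ is self-adjoint (as the Friedrichs extension of a non-negative symmetric operator) and $\mathcal{E}^{2\mathrm{D}}_{\varepsilon}\ge 0$, the resolvent $(H^{2\mathrm{D}}_{\varepsilon}+1)^{-1}$ is bounded on $L^2_{\mathrm{sym}}(\mathbb{R}^{2N})$, and by the classical criterion (see e.g. Reed-Simon), its compactness is equivalent to the compactness of the embedding of the form domain $\mathcal{D}_q(H^{2\mathrm{D}}_{\varepsilon})$ equipped with the form norm $\sqrt{\mathcal{E}^{2\mathrm{D}}_{\varepsilon}(\cdot)+\|\cdot\|_{L^2}^2}$ into $L^2_{\mathrm{sym}}(\mathbb{R}^{2N})$.

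To obtain this compactness, I would factor the embedding through $H^1_V(\mathbb{R}^{2N})$. Corollary \ref{cptD} asserts that $\mathcal{D}_q(H^{2\mathrm{D}}_{\varepsilon})\subset H^1_V(\mathbb{R}^{2N})$, but for the compactness argument I need the inclusion to be continuous. This is the point where I would use Theorem \ref{0to2hardy} quantitatively: for $\phi$ in the form domain, the Hardy bound combined with the Cauchy-Schwarz inequality used in the proof of Corollary \ref{cptD} gives
\[
\sum_j \int_{\R^{2N}}\abs{\mathbf{A}_j\phi}^2 \le C_N \sum_j \int_{\R^{2N}} \abs{D_j\phi}^2,
\]
and hence $\|\nabla_{\mathbf{x}_j}\phi\|_{L^2}\le \|D_j\phi\|_{L^2}+\alpha\|\mathbf{A}_j\phi\|_{L^2}$ is controlled by $\sqrt{\mathcal{E}^{2\mathrm{D}}_{\varepsilon}(\phi)}$. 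The potential part of the form norm directly bounds $\||\mathbf{x}_j|\phi\|_{L^2}$ (with a constant depending on $\varepsilon$, which is fixed here). Together, the form norm dominates the $H^1_V$ norm.

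Next, I would invoke the well-known compactness of $H^1_V(\mathbb{R}^{2N})\hookrightarrow L^2(\mathbb{R}^{2N})$: by the standard Rellich-Kondrachov theorem a bounded sequence in $H^1_V$ converges (up to extraction) in $L^2_{\mathrm{loc}}$, and the confining potential $V_{\varepsilon}(\mathbf{x})\to\infty$ as $|\mathbf{x}|\to\infty$ kills the tails, upgrading the convergence to $L^2(\mathbb{R}^{2N})$. Composing with the continuous inclusion from the previous paragraph yields the desired compact embedding.

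Finally, the spectral theorem for bounded-below self-adjoint operators with compact resolvent (see e.g. Lewin's book cited in the paper) produces a non-decreasing divergent sequence of eigenvalues $(\lambda^{2\mathrm{D}}_k)_{k\in\mathbb{N}^*}$ together with an orthonormal basis of eigenfunctions $(\Psi^{2\mathrm{D}}_k)_{k\in\mathbb{N}^*}$ of $L^2_{\mathrm{sym}}(\mathbb{R}^{2N})$. The main obstacle is really just the first reduction: ensuring that the quadratic form \emph{together with the Hardy-type bound} controls the plain $H^1$-gradient uniformly, so that compactness of the pure harmonic-oscillator form domain transfers to $\mathcal{D}_q(H^{2\mathrm{D}}_{\varepsilon})$; everything else is routine once this continuous embedding into $H^1_V$ is established.
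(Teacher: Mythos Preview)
Your proposal is correct and follows essentially the same route as the paper: both argue that $(H^{2\mathrm{D}}_{\varepsilon}+1)^{-1}$ is compact by invoking Corollary~\ref{cptD} together with the compact embedding $H^1_V(\mathbb{R}^{2N})\hookrightarrow L^2(\mathbb{R}^{2N})$, and then apply the spectral theorem. You are simply more explicit than the paper about two points it leaves implicit, namely that the inclusion $\mathcal{D}_q(H^{2\mathrm{D}}_{\varepsilon})\subset H^1_V(\mathbb{R}^{2N})$ is continuous (via the quantitative Hardy bound) and why $H^1_V\hookrightarrow L^2$ is compact.
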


\begin{proof}
    Since the quadratic form $\mathcal{E}^{2\mathrm{D}}$ is non-negative,  
    $$
    H^{2\mathrm{D}}_{\varepsilon}+1: \mathcal{D}(H^{2\mathrm{D}}_{\varepsilon}) \to L^2_{\mathrm{sym}}(\mathbb{R}^{2N})
    $$ is invertible and 
    $$(H^{2\mathrm{D}}_{\varepsilon}+1)^{-1}: L^2_{\mathrm{sym}}(\mathbb{R}^{2N}) \to \mathcal{D}(H^{2\mathrm{D}}_{\varepsilon}) \subset L^2_{\mathrm{sym}}(\mathbb{R}^{2N}) 
    $$ is bounded. 
    With the help of Corollary~\ref{cptD} above and the fact that $H^1_V(\mathbb{R}^{2N})$ is compactly embedded in $ L^2(\mathbb{R}^{2N}) $, we deduce that $(H^{2\mathrm{D}}_{\varepsilon}+1)^{-1}$ is a compact operator. 
    Applying the spectral theorem~\cite[Chapters~4 and~5]{Lewin-livre22}, there exists an orthonormal basis $(\Psi_k^{2\mathrm{D}})_{k\in\mathbb{N}^*}$ of $L^2_{\mathrm{sym}}(\mathbb{R}^{2N})$, and a  decreasing vanishing sequence $(\gamma^{}_k)_{k\in\mathbb{N}^*}$ such that 
    \begin{equation*}
        (H^{2\mathrm{D}}_{\varepsilon}+1)^{-1} \Psi_k^{2\mathrm{D}} = \gamma^{}_k \Psi_k^{2\mathrm{D}}.
    \end{equation*}
    This implies that such $ \Psi^{2\mathrm{D}}_k $'s are all in $\mathcal{D}(H^{2\mathrm{D}}_{\varepsilon})$ and 
    \begin{equation*}
        H^{2\mathrm{D}}_{\varepsilon}\Psi_k^{2\mathrm{D}} = (\gamma^{-1}_k -1 ) \Psi_k^{2\mathrm{D}},
    \end{equation*}
    which concludes the proof.
\end{proof}

We shall characterize eigenvalues by standard min-max formulae~\cite[Section 5.5]{Lewin-livre22}:

\begin{theorem}[\textbf{Courant-Fischer min-max formulae}]\label{minmax}\mbox{}\\
Let $(H,\mathcal{D}(H))$ be a bounded-below self-adjoint operator, and let $(\mathcal{E},\mathcal{D}_q(H))$ be its corresponding closed quadratic form. Then
$$\mu_{k} \colonequals \inf_{\substack{\text{subspace }V \subset \mathcal{D}(H) \\ \mathrm{dim} V = k}}  \sup_{\substack{\Psi \in V,\\\norm{\Psi} = 1}} \left\langle\Psi| H \Psi  \right\rangle  = \inf_{\substack{\text{subspace }V \subset \mathcal{D}_q(H) \\ \mathrm{dim} V = k}}  \sup_{\substack{\Psi \in V,\\\norm{\Psi} = 1}} \mathcal{E}(\Psi)  $$
is the $k$-th eigenvalue of $H$ counted with multiplicity or is $\inf \sigma_{ess}(A)$, the bottom of the essential spectrum of $H$.
\end{theorem}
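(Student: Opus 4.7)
The plan is to establish this as a standard consequence of the spectral theorem, in the spirit of \cite[Section~5.5]{Lewin-livre22}.

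First I would reduce to showing that the min-max over the form domain $\mathcal{D}_q(H)$ characterises the $k$-th eigenvalue of $H$ (or the bottom of the essential spectrum). The equality between the two infima is a density argument: $\mathcal{D}(H)$ is dense in $\mathcal{D}_q(H)$ for the form norm $\sqrt{\mathcal{E}(\cdot)+\|\cdot\|^2}$, and $\mathcal{E}$ is continuous on $\mathcal{D}_q(H)$ for this norm. Given a $k$-dimensional subspace $V \subset \mathcal{D}_q(H)$ with orthonormal basis $\{\Psi_1,\dots,\Psi_k\}$, I would approximate each $\Psi_i$ by a sequence $(\Psi_i^{(n)})_n \subset \mathcal{D}(H)$ converging in the form norm, observe that $V_n := \mathrm{span}(\Psi_1^{(n)},\dots,\Psi_k^{(n)})$ is $k$-dimensional for $n$ large, and use compactness of the unit sphere in the finite-dimensional space $V_n$ to deduce that the Rayleigh sup over $V_n$ converges to that over $V$. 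The inclusion $\mathcal{D}(H) \subset \mathcal{D}_q(H)$ together with $\mathcal{E}=\langle \cdot | H\cdot\rangle$ on $\mathcal{D}(H)$ gives the opposite inequality for free.

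Next, via the spectral theorem I would write $H = \int_{\mathbb{R}} \lambda\, dE_\lambda$, denote $\mu^{\mathrm{ess}} := \inf \sigma_{\mathrm{ess}}(H)$, and enumerate the isolated eigenvalues of finite multiplicity strictly below $\mu^{\mathrm{ess}}$ as $\lambda_1 \le \lambda_2 \le \cdots$ with corresponding orthonormal eigenvectors, counted with multiplicity. The claim to verify is: if there are at least $k$ such eigenvalues, then $\mu_k = \lambda_k$; otherwise $\mu_k = \mu^{\mathrm{ess}}$. For the upper bound in the first case, I take $V$ to be the span of the first $k$ eigenvectors and check directly that $\langle \Psi | H\Psi \rangle \le \lambda_k$ for every unit $\Psi \in V$; in the second case I use that the spectral projections $E_{[\mu^{\mathrm{ess}}, \mu^{\mathrm{ess}}+\delta]}$ have infinite-dimensional range for every $\delta > 0$ to extract $k$-dimensional trial subspaces with Rayleigh sup below $\mu^{\mathrm{ess}}+\delta$. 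For the lower bound, given any $k$-dimensional $V \subset \mathcal{D}_q(H)$, a dimension count produces a unit vector $\Psi \in V$ orthogonal to the first $k-1$ eigenvectors; spectral calculus then yields $\langle \Psi | H \Psi \rangle \ge \lambda_k$ in the first case and $\ge \mu^{\mathrm{ess}}$ in the second.

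The main obstacle will be cleanly handling the dichotomy in the conclusion, in particular dealing carefully with the multiplicities of eigenvalues accumulating at $\mu^{\mathrm{ess}}$ and ensuring that the form-norm approximation in the first step commutes with taking the sup over the unit sphere of $V$. All other ingredients are routine once the spectral resolution of $H$ is in hand.
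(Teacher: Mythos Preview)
Your proposal is correct and follows the standard spectral-theoretic route. The paper does not give its own proof of this statement: it simply quotes the min-max principle as a known result, citing \cite[Section~5.5]{Lewin-livre22}, so there is nothing to compare against beyond noting that your sketch is precisely the argument one finds in that reference.
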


Thanks to the diagonalization property of $H^{2\mathrm{D}}_{\varepsilon} $ and $H^{1\mathrm{D}} $, we know that they have empty essential spectrum. Hence, their eigenvalues can be characterized by the min-max formulae above.

\section{Energy upper bounds}

In this section, we bound the 2D eigenvalues from above by constructing appropriate trial states:

\begin{theorem}[\textbf{Upper bounds for 2D eigenvalues}]\label{thmUpperB}\mbox{}\\
Let $\lambda^{\mathrm{2D}}_{k}$ and $\lambda^{1\mathrm{D}}_k$ be the $k$-th eigenvalues of $H^{\mathrm{2D}}_{\varepsilon}$ and $H^{1\mathrm{D}}$ respectively (counting the multiplicity), and let $e_{\varepsilon}$ be the ground energy of $H^{\mathrm{HO}}_{\varepsilon}$. Then we have the following inequality:
\begin{equation*} \label{upperineq}
    {\lambda^{2\mathrm{D}}_{k} \le Ne_{\varepsilon}} + \lambda^{1\mathrm{D}}_{k}.
\end{equation*}
\end{theorem}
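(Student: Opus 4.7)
The plan is to apply the Courant--Fischer min-max formula from Theorem~\ref{minmax}: I will exhibit a $k$-dimensional subspace of the form domain $\mathcal{D}_q(H^{2\mathrm{D}}_{\varepsilon})$ on which $\mathcal{E}^{2\mathrm{D}}_{\varepsilon}$ is bounded by $(N e_{\varepsilon} + \lambda^{1\mathrm{D}}_k)\|\cdot\|_{L^2}^2$. The subspace is spanned by singular-gauge tensor trial states of the form~\eqref{eq:ansatz}: for $j=1,\ldots,k$, set
\begin{equation*}
    \Phi_j(\mathbf{x}_1,\ldots,\mathbf{x}_N) \colonequals \psi^{1\mathrm{D}}_j(x_1,\ldots,x_N)\, U_{\varepsilon}(y_1,\ldots,y_N)\, e^{-\mathrm{i}\alpha S(\mathbf{x}_1,\ldots,\mathbf{x}_N)},
\end{equation*}
with $U_{\varepsilon}$ from~\eqref{defU}, $S$ from~\eqref{defS}, and $\psi^{1\mathrm{D}}_1,\ldots,\psi^{1\mathrm{D}}_k$ an orthonormal family of eigenfunctions of $H^{1\mathrm{D}}$ associated with its first $k$ eigenvalues.

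The formal energy computation is precisely the reason for introducing the gauge factor $e^{-\mathrm{i}\alpha S}$. By the identity~\eqref{eq:grad S}, we have $\grad_{\mathbf{x}_j}S = \mathbf{A}_j$ pointwise away from the 1D diagonals, so on that set
\begin{equation*}
    D_j\Phi_j \;=\; -\mathrm{i}\, e^{-\mathrm{i}\alpha S}\,\grad_{\mathbf{x}_j}\bigl(\psi^{1\mathrm{D}}_j\, U_{\varepsilon}\bigr).
\end{equation*}
Since $\psi^{1\mathrm{D}}_j$ depends only on the $x$-variables while $U_{\varepsilon}$ depends only on the $y$-variables, the two gradients act on complementary coordinates and the cross terms in $|D_j\Phi_j|^2$ vanish. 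Integrating, using $\|U_\varepsilon\|_{L^2}=1$ and the eigenvalue equations for $\psi^{1\mathrm{D}}_j$ and $u_{\varepsilon}$ yields
\begin{equation*}
    \mathcal{E}^{2\mathrm{D}}_{\varepsilon}(\Phi_j) \;=\; \langle \psi^{1\mathrm{D}}_j,\, H^{1\mathrm{D}}\psi^{1\mathrm{D}}_j \rangle + N\langle u_{\varepsilon},\, H^{\mathrm{HO}}_{\varepsilon} u_{\varepsilon}\rangle \;=\; \lambda^{1\mathrm{D}}_j + N e_{\varepsilon}.
\end{equation*}

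The main technical obstacle is to justify that $\Phi_j \in \mathcal{D}_q(H^{2\mathrm{D}}_{\varepsilon})$ and that the above computation is rigorous, since $e^{-\mathrm{i}\alpha S}$ is discontinuous across the 1D diagonals $\mathbb{D}^{1\mathrm{D}}$ and $\grad S$ carries a distributional component there. I will regularize by introducing a smooth symmetric cutoff $\chi_\eta$ on $\mathbb{R}^N_x$ that vanishes in the $\eta$-tubular neighborhood of $\mathbb{D}^{1\mathrm{D}}$ and equals $1$ outside its $2\eta$-tubular neighborhood, together with a spatial cutoff $\theta_R$ at infinity. On the support of $\chi_\eta\theta_R$, both the Tonks--Girardeau factor $\prod_{j<l}|x_j-x_l|$ and the phase $e^{-\mathrm{i}\alpha S}$ are smooth, hence $\chi_\eta\theta_R\Phi_j$ is smooth, symmetric, compactly supported away from $\mathbb{D}^{2\mathrm{D}}$, and belongs to the core $\mathcal{C}^{2\mathrm{D}}$. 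The crucial quantitative estimate is that the cutoff error vanishes as $\eta\to 0$: the linear vanishing $|\psi^{1\mathrm{D}}_j|\lesssim|x_j-x_l|$ near $\{x_j=x_l\}$, combined with $|\grad\chi_\eta|\lesssim\eta^{-1}$ on a transition region of transverse measure $O(\eta)$, produces a kinetic-energy correction of order $O(\eta)$, while the $L^2$-correction is even smaller. Taking $R\to\infty$ is routine from the Gaussian decay of $\psi^{1\mathrm{D}}_j$ and $U_\varepsilon$. This simultaneously proves $\Phi_j\in\mathcal{D}_q(H^{2\mathrm{D}}_{\varepsilon})$ and the stated value of $\mathcal{E}^{2\mathrm{D}}_{\varepsilon}(\Phi_j)$.

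The family $\{\Phi_j\}_{j=1}^k$ is orthonormal in $L^2_{\mathrm{sym}}(\mathbb{R}^{2N})$ because $|e^{-\mathrm{i}\alpha S}|=1$, $\|U_\varepsilon\|_{L^2}=1$, and $\{\psi^{1\mathrm{D}}_j\}$ is orthonormal in $L^2(\mathbb{R}^N)$. For any element $\Phi=\sum_{j=1}^k c_j\Phi_j = \bigl(\sum_j c_j\psi^{1\mathrm{D}}_j\bigr)U_{\varepsilon}\,e^{-\mathrm{i}\alpha S}$ of the spanned subspace, the same gauge-cancellation computation gives
\begin{equation*}
    \mathcal{E}^{2\mathrm{D}}_{\varepsilon}(\Phi) \;=\; \Bigl\langle \sum_j c_j\psi^{1\mathrm{D}}_j \,\Big|\, H^{1\mathrm{D}}\sum_j c_j\psi^{1\mathrm{D}}_j \Bigr\rangle + N e_{\varepsilon}\|\Phi\|_{L^2}^2 \;\le\; (N e_{\varepsilon} + \lambda^{1\mathrm{D}}_k)\|\Phi\|_{L^2}^2,
\end{equation*}
since $\sum_j|c_j|^2\lambda^{1\mathrm{D}}_j\le\lambda^{1\mathrm{D}}_k\sum_j|c_j|^2$. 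Theorem~\ref{minmax} then yields $\lambda^{2\mathrm{D}}_k \le N e_{\varepsilon} + \lambda^{1\mathrm{D}}_k$.
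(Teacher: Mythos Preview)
Your proof is correct and follows essentially the same approach as the paper: the same singular-gauge trial states~\eqref{eq:ansatz}, the same energy identity $\mathcal{E}^{2\mathrm{D}}_{\varepsilon}(\Psi) = \mathcal{E}^{1\mathrm{D}}(\psi) + N e_{\varepsilon}$ via~\eqref{eq:grad S}, and the same min-max conclusion. The only difference is in the technical justification that $\Phi_j \in \mathcal{D}_q(H^{2\mathrm{D}}_{\varepsilon})$: the paper argues directly that the distributional identity~\eqref{+phase} extends across the 1D diagonals because $\psi$ vanishes there, whereas you reach the same conclusion by an explicit cutoff approximation $\chi_\eta \theta_R \Phi_j \in \mathcal{C}^{2\mathrm{D}}$ with controlled error---a slightly more hands-on but equally valid route.
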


\begin{proof}
    According to the min-max formulae in Theorem~\ref{minmax}, the energy $\lambda^{2\mathrm{D}}_{k}$ can be written as 
\begin{equation}\label{eq2Denergy}
    \lambda^{2\mathrm{D}}_k = \inf_{\substack{\text{linearly independent}\\\Psi_1, \dots,\Psi_k \in \mathcal{D}_q(H^{2\mathrm{D}}_{\varepsilon})}}  \sup_{\substack{\norm{\Psi}_{L^2(\mathbb{R}^{2N})}=1\\ \Psi\in\operatorname{span}\{\Psi_1, \dots,\Psi_k\}}} \mathcal{E}^{2\mathrm{D}}_{\varepsilon} (\Psi).
\end{equation} 
We hence obtain an upper bound for $\lambda^{2\mathrm{D}}_k$ by constructing trial states of the form 
\begin{equation}\label{eq:formansatz}
\Psi(\mathbf{x}_1,\dots,\mathbf{x}_N)=\psi({x}_1,\dots,{x}_N)U_{\varepsilon}({y}_1,\dots,{y}_N)e^{-\mathrm{i}\alpha S(\mathbf{x}_1,\dots,\mathbf{x}_N)}
\end{equation}
with $\psi \in \mathcal{D}_q(H^{1\mathrm{D}}_{})$, $U_{\varepsilon}$ and $S$ as in~\eqref{defU} and~\eqref{defS} respectively. 

Notice that $S$ is smooth away from the $x$-direction diagonals, i.e. $S$ is well-defined and differentiable on 
\begin{equation}\label{lam0}
    \Lambda_0 \colonequals \left\{ (\mathbf{x}_1,\dots,\mathbf{x}_N) \in \mathbb{R}^{2N}, |x_j - x_k| > 0, \forall 1\le j<k\le N \right\}.
\end{equation}  
One can check that $S$ has a gap on the $x$-direction diagonals, hence, $S$ fails to be in $H^1_{\rm loc}(\mathbb{R}^{2N})$, but $S$ is however in $H^1_{\rm loc}(\Lambda_0)$.
A direct calculation based on~\eqref{eq:grad S} yields 
\begin{equation} \label{+phase}
\begin{split}
    { -\mathrm{i}\grad_{\mathbf{x}_j} } \LR{\psi U_{\varepsilon}}  = { -\mathrm{i}\grad_{\mathbf{x}_j} } \LR{\Psi e^{\mathrm{i}\alpha S }} & =  -\mathrm{i} \LR{\grad_{\mathbf{x}_j} \Psi}e^{\mathrm{i}\alpha S }  + \alpha \LR{\grad_{\bx_{j}}S }\Psi e^{\mathrm{i}\alpha S } \\
    & = \LR{ \LR{-\mathrm{i}\grad_{\mathbf{x}_j}+ \alpha {\mathbf{A}_j}}\Psi} e^{\mathrm{i}\alpha S }  = \LR{D_j\Psi} e^{\mathrm{i}\alpha S}
\end{split}
\end{equation} 
in the sense of distributions in $ \mathcal{D}'(\Lambda_0) $. Hence $D_j \Psi \in L^2(\Lambda_0)$. But $\psi \in \mathcal{D}_q(H^{1\mathrm{D}})$ vanishes on the diagonals and thus $\Psi$ is continuous across the components of $\Lambda_0$ despite the jumps in its phase. It follows that (\ref{+phase}) also holds around the singularity of $S$, i.e. can be extended to $\mathcal{D}'(\mathbb{R}^{2N}) $.
Hence, $D_j \Psi $ is in $L^2(\mathbb{R}^{2N})$, and then the trial state $\Psi$ is indeed in $\mathcal{D}_q(H^{2\mathrm{D}}_{\varepsilon})$ with the energy relation: 
\begin{equation}\label{eqDiff}
\begin{split}
    \mathcal{E}^{2\mathrm{D}}_{\varepsilon} ( \Psi ) & =  \sum_{j=1}^N  \int_{\mathbb{R}^{2N}}   \left( {  \left|{D_j\Psi}\right|^2 +  V_{\varepsilon}(\mathbf{x}_j)\absolutevalue{\Psi}^2 } \right) \\
    & =  \sum_{j=1}^N  \int_{\mathbb{R}^{2N}}  \LR{  \left| \partial_{x_j}\psi \right|^2  +  |{x_j}|^2 \absolutevalue{\psi}^2}  \abs{U_{\varepsilon}}^2 +   \sum_{j=1}^N  \int_{\mathbb{R}^{2N}} \LR{ \left| \partial_{y_j}U_{\varepsilon} \right|^2  + \frac{1}{\varepsilon^2} |{y_j}|^2 |{U_{\varepsilon}}|^2 }\absolutevalue{\psi}^2\\
    & = \mathcal{E}^{1\mathrm{D}} ( \psi ) + Ne_{\varepsilon} .
\end{split}
\end{equation}

Take now $\psi_1^{1\mathrm{D}}, \dots, \psi_k^{1\mathrm{D}}$ to be orthonormal eigenfunctions for the  1D eigenvalues $\lambda^{1\mathrm{D}}_1,\dots,\lambda^{1\mathrm{D}}_k $ respectively. The corresponding trial states as in~\eqref{eq:formansatz} are clearly orthonormal in $L^2 (\R^{2N})$. 
Using (\ref{eq2Denergy}) and (\ref{eqDiff}), we obtain
\begin{equation*}
\begin{split}
\lambda^{2\mathrm{D}}_k & \le   
\sup_{\substack{\Psi = \psi U_{\varepsilon}e^{-\mathrm{i}\alpha S}, \\  \psi\in\operatorname{span}\{\psi_1^{1\mathrm{D}}, \dots,\psi_k^{1\mathrm{D}}\}}} \mathcal{E}^{2\mathrm{D}}_{\varepsilon}(\Psi),\label{Rquotient}\\
& = Ne_{\varepsilon} + 
\sup_{{ \psi\in\operatorname{span}\{\psi_1^{1\mathrm{D}}, \dots,\psi_k^{1\mathrm{D}}\}}} \mathcal{E}^{1\mathrm{D}}(\psi)\\
& = N e_{\varepsilon} + \lambda_k^{1\mathrm{D}},
\end{split}
\end{equation*} 
where the suprema are over $L^2$-normalized functions. 
This concludes the proof of Theorem~\ref{thmUpperB}.
\end{proof}

\section{Energy lower bounds}

In this section, we will conclude the proof of Theorem~\ref{thmEE} by proving the lower bounds matching the upper bounds of Theorem \ref{thmUpperB}.

\begin{theorem}[\textbf{Lower bounds for 2D eigenvalues}]\label{thmLowerB}\mbox{}\\
Let $\lambda^{\mathrm{2D}}_{k}$ and $\lambda^{1\mathrm{D}}_k$ be the $k$-th eigenvalues of $H^{\mathrm{2D}}_{\varepsilon}$ and $H^{1\mathrm{D}}$ respectively (counting the multiplicity), and let $e_{\varepsilon}$ be the ground energy of $H^{\mathrm{HO}}_{\varepsilon}$. Then 
\begin{equation*}
    \liminf_{\varepsilon\to 0}\LR{\lambda^{2\mathrm{D}}_{k} - Ne_{\varepsilon}} \ge \lambda^{1\mathrm{D}}_{k}.
\end{equation*}
\end{theorem}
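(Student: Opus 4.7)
The plan is to mirror the construction of Theorem~\ref{thmUpperB} in reverse: starting from orthonormal eigenfunctions $\Psi^{2\mathrm{D}}_1,\dots,\Psi^{2\mathrm{D}}_k$ of $H^{2\mathrm{D}}_\varepsilon$ associated to $\lambda^{2\mathrm{D}}_1,\dots,\lambda^{2\mathrm{D}}_k$, I apply the singular gauge $e^{\mathrm{i}\alpha S}$ to remove the long-range magnetic interactions, project on the transverse ground state $U_\varepsilon$, and extract a $k$-dimensional family of 1D limit trial functions to which the Courant-Fischer min-max of Theorem~\ref{minmax} for $H^{1\mathrm{D}}$ can be applied. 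The upper bound of Theorem~\ref{thmUpperB} combined with Theorem~\ref{0to2hardy} supplies the uniform a priori estimates $\mathcal{E}^{2\mathrm{D}}_\varepsilon(\Psi^{2\mathrm{D}}_j) - Ne_\varepsilon \le \lambda^{1\mathrm{D}}_k$ and $\int|\Psi^{2\mathrm{D}}_j|^2/|\mathbf{x}_l-\mathbf{x}_m|^2 \le C$ that will drive the whole argument.

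\textbf{Trial states and energy splitting.} On the open set $\Lambda_0$ of~\eqref{lam0} I form $\tilde\Psi^\varepsilon_j := \Psi^{2\mathrm{D}}_j\, e^{\mathrm{i}\alpha S}$; the identity~\eqref{+phase} gives $|\nabla_{\mathbf{x}_l}\tilde\Psi^\varepsilon_j| = |D_l\Psi^{2\mathrm{D}}_j|$ in $L^2(\Lambda_0)$. I then define the 1D projection
\begin{equation*}
\varphi^\varepsilon_j(x_1,\dots,x_N) := \int_{\mathbb{R}^N} \Psi^{2\mathrm{D}}_j(\mathbf{x}_1,\dots,\mathbf{x}_N)\, e^{\mathrm{i}\alpha S}\, U_\varepsilon\, dy_1\cdots dy_N,
\end{equation*}
and the orthogonal remainder $R^\varepsilon_j := \tilde\Psi^\varepsilon_j - \varphi^\varepsilon_j U_\varepsilon$, which satisfies $\langle R^\varepsilon_j(x,\cdot), U_\varepsilon\rangle_{L^2_y}=0$ at a.e.~$x$. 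Using the spectral gap $2/\varepsilon$ of the transverse harmonic oscillator above its ground state in the $y$-variables, together with the cancellation of the cross terms by orthogonality in the $x$-variables (since $U_\varepsilon$ is $x$-independent), I expect the clean lower bound
\begin{equation*}
\mathcal{E}^{2\mathrm{D}}_\varepsilon(\Psi^{2\mathrm{D}}_j) \ge Ne_\varepsilon \|\Psi^{2\mathrm{D}}_j\|_{L^2}^2 + \tilde{\mathcal{E}}^{1\mathrm{D}}(\varphi^\varepsilon_j) + 2e_\varepsilon \|R^\varepsilon_j\|_{L^2}^2,
\end{equation*}
where $\tilde{\mathcal{E}}^{1\mathrm{D}}(\varphi) := \sum_l\int(|\partial_{x_l}\varphi|^2 + x_l^2|\varphi|^2)$ is the unconstrained 1D harmonic energy. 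This gives $\|\varphi^\varepsilon_j\|_{H^1_V(\mathbb{R}^N)} \le C$ and $\|R^\varepsilon_j\|_{L^2} = O(\sqrt\varepsilon)$, so up to a subsequence $\varphi^\varepsilon_j \to \psi^0_j$ strongly in $L^2(\mathbb{R}^N)$ and weakly in $H^1(\mathbb{R}^N)$, with the family $(\psi^0_j)_{j\le k}$ orthonormal in the limit.

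\textbf{Diagonal vanishing and conclusion.} The crux is to verify $\psi^0_j\in\mathcal{D}_q(H^{1\mathrm{D}})$, i.e.~zero $H^1$-trace on the 1D diagonals. I plan to establish this via the weighted Cauchy-Schwarz bound
\begin{equation*}
\int_{\mathbb{R}^N}\frac{|\varphi^\varepsilon_j(x)|^2}{(x_j-x_k)^2+\varepsilon}\, dx \le \int_{\mathbb{R}^{2N}}\frac{|\Psi^{2\mathrm{D}}_j|^2}{|\mathbf{x}_j-\mathbf{x}_k|^2} \le C,
\end{equation*}
obtained by applying Cauchy-Schwarz to $\varphi^\varepsilon_j$ with weights $|\mathbf{x}_j-\mathbf{x}_k|^{\mp 1}$ on the two factors, using the Gaussian identity $\int|\mathbf{x}_j-\mathbf{x}_k|^2|U_\varepsilon|^2\,dy = (x_j-x_k)^2+\varepsilon$, and invoking Theorem~\ref{0to2hardy}. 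Fatou's lemma then yields $\int|\psi^0_j|^2/(x_j-x_k)^2\,dx < \infty$ for every pair, which together with $\psi^0_j\in H^1(\mathbb{R}^N)$ forces the $H^1$-trace on each 1D diagonal to vanish. Hence $\tilde{\mathcal{E}}^{1\mathrm{D}}(\psi^0_j) = \mathcal{E}^{1\mathrm{D}}(\psi^0_j)$, and weak-$H^1$ lower semicontinuity gives $\mathcal{E}^{1\mathrm{D}}(\psi^0_j) \le \liminf_\varepsilon(\mathcal{E}^{2\mathrm{D}}_\varepsilon(\Psi^{2\mathrm{D}}_j) - Ne_\varepsilon)$; applying Theorem~\ref{minmax} to $H^{1\mathrm{D}}$ on $\mathrm{span}\{\psi^0_1,\dots,\psi^0_k\}$ closes the argument. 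The main obstacle is precisely this diagonal-vanishing step: at finite $\varepsilon$, the projection $\varphi^\varepsilon_j$ does \emph{not} vanish on $\mathbb{D}^{1\mathrm{D}}$ (the opposite phase jumps $e^{\pm\mathrm{i}\alpha\pi/2}$ do cancel by the $y_j\leftrightarrow y_l$ symmetry of $\Psi, S, U_\varepsilon$, so $\varphi^\varepsilon_j$ is merely continuous across the diagonals), and the hard-core constraint characteristic of the Tonks-Girardeau limit must be recovered asymptotically by the weighted Hardy estimate above, which encodes quantitatively how the long-range 2D magnetic repulsion concentrates into pointwise impenetrability in 1D.
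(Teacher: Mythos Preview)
Your overall architecture matches the paper's: gauge away via $e^{\mathrm{i}\alpha S}$, decouple the transverse ground state, extract weak $H^1$ limits of the $1$D projections, and feed an orthonormal family into the min-max for $H^{1\mathrm{D}}$. The energy splitting you state is correct (the paper derives an equivalent identity in Proposition~\ref{decoupE} by writing $\Psi^{2\mathrm{D}}_j=\Phi_j U_\varepsilon$), your continuity observation for $\varphi^\varepsilon_j$ across the $1$D diagonals is right, and the orthonormality and final min-max steps go through as you indicate.

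The genuine gap is at the diagonal-vanishing step. Your weighted Cauchy--Schwarz does give
\[
\int_{\mathbb{R}^N}\frac{|\varphi^\varepsilon_j|^2}{(x_l-x_m)^2+\varepsilon}\,dx \;\le\; \int_{\mathbb{R}^{2N}}\frac{|\Psi^{2\mathrm{D}}_j|^2}{|\mathbf{x}_l-\mathbf{x}_m|^2},
\]
but the right-hand side is \emph{not} uniformly bounded: Theorem~\ref{0to2hardy} controls it by a constant times $\sum_l\int|D_l\Psi^{2\mathrm{D}}_j|^2$, and this full magnetic kinetic energy is of order $1/\varepsilon$, not $O(1)$ (the energy $\lambda^{2\mathrm{D}}_j\sim N/\varepsilon$ is split roughly evenly between the kinetic term and the transverse potential term, as one checks on the trial states of Theorem~\ref{thmUpperB}). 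So your bound degenerates to $\int|\varphi^\varepsilon_j|^2/((x_l-x_m)^2+\varepsilon)\le C/\varepsilon$, which gives nothing via Fatou. What \emph{is} uniformly bounded is the weighted quantity $\sum_l\int|D_l\Phi_j|^2 U_\varepsilon^2$ coming from Proposition~\ref{decoupE}, but the Hardy inequalities of Section~\ref{sec:prelim} are unweighted, and a version with the anisotropic weight $U_\varepsilon^2$ does not follow, since that weight depends on the relative coordinate $\mathbf{r}$ through $r_y$. The paper circumvents this precisely by localizing to a slab $\{|y_l|<\sqrt{\varepsilon}\}$ where $U_\varepsilon^2$ varies only by a bounded factor, applying Theorem~\ref{isolatedHineq} to $\Phi_j$ there (Lemma~\ref{nearDiag}), and then upgrading the resulting $L^2$-smallness near $\{x_1=x_2\}$ to vanishing of the limit via a Newton--Leibniz/trace argument (Lemmas~\ref{W1pbounded}, \ref{step2} and Proposition~\ref{vanish}). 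To salvage your route you would need a genuinely new ingredient: either a $U_\varepsilon^2$-weighted anyon Hardy inequality, or an independent uniform bound on $\int|\Psi^{2\mathrm{D}}_j|^2/|\mathbf{x}_l-\mathbf{x}_m|^2$.
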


Let $\Psi^{\mathrm{2D}}_{ k}$ be an eigenfunction of the 2D Hamiltonian, corresponding to the eigenvalue $\lambda^{2\mathrm{D}}_{k} $. 
Recall that $U_{\varepsilon} $ defined in (\ref{defU}) is strictly positive, so that we can rewrite $\Psi^{\mathrm{2D}}_{k} $ as 
$$\Psi^{\mathrm{2D}}_{ k} (\mathbf{x}_1,\dots,\mathbf{x}_N) =  \Phi_{k} (\mathbf{x}_1,\dots,\mathbf{x}_N)  U_{\varepsilon} ({y}_1,\dots,{y}_N).$$
Then we can express the energy $\lambda^{\mathrm{2D}}_{k} $ in terms of $\Phi_{k}$ and $U_{\varepsilon} $ as follows:

\begin{prop}[\textbf{Energy decoupling}] \label{decoupE}\mbox{}\\
With the same notation as above, the following identity holds:
\begin{equation*}
\lambda^{\mathrm{2D}}_{k} = N  e_{\varepsilon} 
+ \sum\limits_{j=1}^N \int_{\mathbb{R}^{2N}} \left|D_j\Phi_{k}\right|^2 U^2_{\varepsilon} 
+ \sum\limits_{j=1}^N \int_{\mathbb{R}^{2N}} |x_j|^2 \left| \Psi^{\mathrm{2D}}_{k}\right|^2.
\end{equation*}
\end{prop}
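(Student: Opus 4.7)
The plan is to evaluate $\lambda^{\mathrm{2D}}_{k} = \mathcal{E}^{2\mathrm{D}}_{\varepsilon}(\Psi^{\mathrm{2D}}_{k})$ directly, substituting the factorization $\Psi^{\mathrm{2D}}_{k} = \Phi_{k}\, U_{\varepsilon}$ and integrating out the $y$-variables against the one-dimensional harmonic-oscillator eigenvalue equation satisfied by $U_{\varepsilon}$. The key observation is that $U_{\varepsilon}$ depends only on the $y$-coordinates and is real and strictly positive, so most of the magnetic structure passes through it trivially, with only a single cross term to handle.

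First I would split $D_j = (D_{j,x}, D_{j,y})$. Since $U_{\varepsilon}$ is independent of $x_j$ and, as a multiplication operator, commutes with $\alpha \mathbf{A}_j$, one gets $D_{j,x}(\Phi_{k} U_{\varepsilon}) = U_{\varepsilon}\, D_{j,x}\Phi_{k}$, whereas
\[
D_{j,y}(\Phi_{k} U_{\varepsilon}) \;=\; U_{\varepsilon}\, D_{j,y}\Phi_{k} \;-\; \mathrm{i}\Phi_{k}\,\partial_{y_j} U_{\varepsilon}.
\]
Squaring the second line and integrating produces the desired $U_{\varepsilon}^2 |D_{j,y}\Phi_k|^2$, an unwanted $|\Phi_k|^2(\partial_{y_j}U_{\varepsilon})^2$, and a cross term equal to $2 U_{\varepsilon}\partial_{y_j}U_{\varepsilon}\,\operatorname{Im}[\Phi_{k}\,\overline{D_{j,y}\Phi_{k}}]$. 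Using that $A_j^y$ is real-valued, a short computation collapses the imaginary bracket to $\tfrac{1}{2}\partial_{y_j}|\Phi_k|^2$, so the cross term becomes $\tfrac{1}{2}\partial_{y_j}(U_{\varepsilon}^2)\,\partial_{y_j}|\Phi_k|^2$.

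An integration by parts in $y_j$ converts this cross term into $-|\Phi_k|^2\bigl[(\partial_{y_j}U_{\varepsilon})^2 + U_{\varepsilon}\partial^2_{y_j}U_{\varepsilon}\bigr]$, the first piece of which cancels exactly against the unwanted square from the previous step. Combining the surviving $-U_{\varepsilon}\partial^2_{y_j}U_{\varepsilon}|\Phi_k|^2$ with the $y_j^2/\varepsilon^2$ part of the trap potential, and invoking the one-particle eigenvalue equation $(-\partial^2_{y_j} + y_j^2/\varepsilon^2)U_{\varepsilon} = e_{\varepsilon}\, U_{\varepsilon}$ (valid separately for each $j$, since $U_{\varepsilon}$ is a tensor product of ground states), collapses the $y$-contribution to $e_{\varepsilon}\, U_{\varepsilon}^2 |\Phi_k|^2$. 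Summing over $j$, using $\int|\Psi^{\mathrm{2D}}_k|^2 = 1$, and retaining the untouched $x_j^2$ piece of the potential produces exactly the three terms stated in the proposition.

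The main obstacle is justifying the integration by parts, since $\Phi_k = \Psi^{\mathrm{2D}}_k / U_\varepsilon$ is defined only as a pointwise ratio and a priori carries no regularity or decay of its own. The cleanest remedy is to first verify the identity on the dense subspace of wave-functions of the form $\Phi\, U_{\varepsilon}$ with $\Phi \in \mathcal{C}^{2\mathrm{D}}$, for which all manipulations are elementary and the boundary terms manifestly vanish, and then extend to all of $\mathcal{D}_q(H^{2\mathrm{D}}_{\varepsilon})$ by noting that both sides of the identity depend continuously on $\Psi^{\mathrm{2D}}_k$ in the quadratic-form norm $\sqrt{\mathcal{E}^{2\mathrm{D}}_\varepsilon(\cdot)} + \|\cdot\|_{L^2}$, using Corollary~\ref{cptD} and the Hardy bounds of Theorem~\ref{0to2hardy} to handle the magnetic pieces and the Gaussian decay of $U_\varepsilon$ to handle the cross term.
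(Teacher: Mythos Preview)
Your proposal is correct and amounts to the same computation as the paper's, just organized differently. The paper first establishes the non-magnetic identity $\int|\nabla_{\mathbf{x}_j}\Psi|^2 = \int|\Phi|^2 U_\varepsilon(-\partial_{y_j}^2 U_\varepsilon) + \int|\nabla_{\mathbf{x}_j}\Phi|^2 U_\varepsilon^2$ via the same integration by parts you perform, and then adds the magnetic pieces in one stroke by observing that the probability current factors as $\mathbf{J}_j(\Psi^{\mathrm{2D}}_k) = U_\varepsilon^2\,\mathbf{J}_j(\Phi_k)$, which yields the pointwise identity $|D_j\Psi^{\mathrm{2D}}_k|^2 = |\nabla_{\mathbf{x}_j}\Psi^{\mathrm{2D}}_k|^2 + |D_j\Phi_k|^2 U_\varepsilon^2 - |\nabla_{\mathbf{x}_j}\Phi_k|^2 U_\varepsilon^2$. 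Your coordinate-by-coordinate splitting of $D_j$ into $(D_{j,x},D_{j,y})$ reaches the same endpoint without invoking the current; the paper's route is slightly slicker in that the magnetic vector potential never needs to be broken into components, while yours is more elementary in that no auxiliary identity is needed. As for justifying the integration by parts, the paper simply cites the finiteness of $\sum_j\int|y_j|^2|\Psi^{\mathrm{2D}}_k|^2$ and works directly on the eigenfunction; your density-plus-continuity argument is more careful and equally valid (note that $\{\Phi U_\varepsilon:\Phi\in\mathcal{C}^{2\mathrm{D}}\}=\mathcal{C}^{2\mathrm{D}}$ since $U_\varepsilon$ is smooth and strictly positive, and continuity of the right-hand side in the form norm follows from the explicit relation $U_\varepsilon D_j\Phi_k = D_j\Psi^{\mathrm{2D}}_k + \mathrm{i}(y_j/\varepsilon)\Psi^{\mathrm{2D}}_k\,\mathbf{e}_{y_j}$).
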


\begin{proof}
Using integration by parts and the fact that 
$$ \sum_{j=1}^N \int_{\mathbb{R}^{2N}} \absolutevalue{y_j}^2 \absolutevalue{\Psi^{\mathrm{2D}}_{k}}^2 < \infty,$$
we obtain 
\begin{equation}\label{proofED1}
\int_{\mathbb{R}^{2N}} \absolutevalue{\grad_{\mathbf{x}_j}\Psi^{\mathrm{2D}}_{k}}^2 = \int_{\mathbb{R}^{2N}} \absolutevalue{\Phi_{k}}^2 U_{\varepsilon} \LR{-\partial_{y_j}^2U_{\varepsilon}} + \int_{\mathbb{R}^{2N}}\absolutevalue{\grad_{\mathbf{x}_j}\Phi_{k}}^2 U^2_{\varepsilon}.
\end{equation}
By definition, we have 
$$
-\partial_{y_j}^2 U_{\varepsilon} + \varepsilon^{-2} |y_j|^2 U_{\varepsilon} = e_{\varepsilon} U_{\varepsilon}
$$ 
for each $j$. Inserting this in Equation (\ref{proofED1}) we obtain
\begin{equation}\label{proofDecoupE1}
\int_{\mathbb{R}^{2N}} \LR{\absolutevalue{\grad_{\mathbf{x}_j}\Psi^{\mathrm{2D}}_{k}}^2 +V_{\varepsilon}(\mathbf{x}_j)\absolutevalue{\Psi_{k}^{\mathrm{2D}}}^2 } = e_{\varepsilon} + \int_{\mathbb{R}^{2N}}\absolutevalue{\grad_{\mathbf{x}_j}\Phi_{k}}^2 U^2_{\varepsilon} + \int_{\mathbb{R}^{2N}} |x_j|^2 \absolutevalue{\Psi^{\mathrm{2D}}_{k}}^2.
\end{equation}
On the other hand
\begin{equation*}
\begin{split}
\absolutevalue{D_j{\Psi^{\mathrm{2D}}_{k}}}^2 
& = \absolutevalue{\grad_{\mathbf{x}_j} \Psi^{\mathrm{2D}}_{k} }^2 + \mathbf{J}_j( \Psi^{\mathrm{2D}}_{k}) \cdot \mathbf{A}_j(\mathbf{x}_j) + \absolutevalue{ \Psi^{\mathrm{2D}}_{k} } \absolutevalue{\mathbf{A}_j(\mathbf{x}_j)}^2,  
\end{split}
\end{equation*}
where 
$$
\mathbf{J}_j( \Psi ) \colonequals \mathrm{i} \LR{\Psi \grad_{\mathbf{x}_j} \overline{\Psi} - \overline{\Psi} \grad_{\mathbf{x}_j} \Psi}.
$$
And similarly, \begin{equation*}
\absolutevalue{D_j{\Phi_{k}}}^2 = \absolutevalue{\grad_{\mathbf{x}_j}\Phi_{k}}^2 +  \mathbf{J}_j( \Phi_{k}) \cdot \mathbf{A}_j(\mathbf{x}_j) + \absolutevalue{\Phi_{k}}^2 \absolutevalue{\mathbf{A}_j(\mathbf{x}_j)}^2.
\end{equation*}
We observe that 
$$
\mathbf{J}_j( \Psi^{\mathrm{2D}}_{k} ) = U^2_{\varepsilon} \mathbf{J}_j( \Phi_{k} ),
$$
and hence
\begin{equation}\label{proofDecoupE2}
\absolutevalue{D_j{\Psi^{\mathrm{2D}}_{k}}}^2 = \absolutevalue{\grad_{\mathbf{x}_j}\Psi_{k}^{\mathrm{2D}}}^2 + \absolutevalue{D_j{\Phi_{k}}}^2 U^2_{\varepsilon} - \absolutevalue{\grad_{\mathbf{x}_j}\Phi_{k}}^2 U^2_{\varepsilon}. 
\end{equation}
Combining Equations (\ref{proofDecoupE1}) and (\ref{proofDecoupE2}) leads to
\begin{equation*}
\int_{\mathbb{R}^{2N}} \LR{\absolutevalue{D_j\Psi^{\mathrm{2D}}_{k}}^2 +V_{\varepsilon}(\mathbf{x}_j)\absolutevalue{\Psi_{k}^{\mathrm{2D}}}^2 } = e_{\varepsilon} + \int_{\mathbb{R}^{2N}}\absolutevalue{D_j\Phi_{k}}^2 U^2_{\varepsilon} + \int_{\mathbb{R}^{2N}} |x_j|^2 \absolutevalue{\Psi^{\mathrm{2D}}_{k}}^2,
\end{equation*}
and we complete the proof by summing over $j$.
\end{proof}

Returning to (\ref{+phase}), we have 
\begin{equation*}\label{diffgrad}
    \LR{D_j\Phi_{k}} e^{\mathrm{i}\alpha S} =  -\mathrm{i} \grad_{\mathbf{x}_j}\LR{\Phi_k e^{\mathrm{i}\alpha S }}  \quad \text{ in }  \quad \mathcal{D}'(\Lambda_0) .
\end{equation*}
Then Proposition \ref{decoupE} becomes
\begin{equation}\label{proofLower1}
    \lambda_k^{2\mathrm{D}} = N e_{\varepsilon} + \sum\limits_{j=1}^N \int_{\Lambda_0} \absolutevalue{ \grad_{\mathbf{x}_j}{\phi_{k}} }^2 U^2_{\varepsilon} + \sum\limits_{j=1}^N \int_{\mathbb{R}^{2N}} |x_j|^2 \absolutevalue{\Psi^{\mathrm{2D}}_{k}}^2,
\end{equation}
where 
$$\phi_{k}(\mathbf{x}_1,\dots,\mathbf{x}_N) \colonequals \Phi_k(\mathbf{x}_1,\dots,\mathbf{x}_N) e^{\mathrm{i}\alpha S(\mathbf{x}_1,\dots,\mathbf{x}_N) }. $$
We next rescale the tightly-confined space variables and denote 
\begin{equation}\label{eq:rescale func}
\phi_{\varepsilon,k}(x_1,y_1,\dots,x_N,y_N) \colonequals \phi_k(x_1,\sqrt{\varepsilon} y_1,\dots,x_N,\sqrt{\varepsilon}y_N). 
\end{equation}
Then Equation (\ref{proofLower1}) becomes
\begin{equation}\label{proofLower4}
    \lambda_k^{2\mathrm{D}} 
    = 
     N e_{\varepsilon} 
    + \sum\limits_{j=1}^N  \int_{\Lambda_0} \LR{\absolutevalue{\partial_{x_j}\phi_{\varepsilon,k}}^2 + \frac{1}{\varepsilon} \absolutevalue{\partial_{y_j}\phi_{\varepsilon,k}}^2}U_{1}^2 + \sum\limits_{j=1}^N \int_{\mathbb{R}^{2N}} |x_j|^2 \absolutevalue{\phi_{\varepsilon,k}}^2U^2_1,
\end{equation}
where $U_1$ is defined by setting $\varepsilon = 1$ in~\eqref{defU}.

Applying the upper bounds in Theorem \ref{thmUpperB} to Equation (\ref{proofLower4}), yields
\begin{equation}\label{proofLower5}
    \sum\limits_{j=1}^N  \int_{\Lambda_0} \LR{\absolutevalue{\partial_{x_j}\phi_{\varepsilon,k}}^2 + \frac{1}{\varepsilon} \absolutevalue{\partial_{y_j}\phi_{\varepsilon,k}}^2}U_{1}^2 + \sum\limits_{j=1}^N \int_{\mathbb{R}^{2N}} |x_j| ^2 \absolutevalue{\phi_{\varepsilon,k}}^2U^2_1 \le \lambda^{1\mathrm{D}}_k,
\end{equation}
where the right-hand side does not depend on $\varepsilon$.

Intuitively, it follows from the above that $\phi_{\varepsilon,k}$ will have less and less dependence on the $y$-coordinates as $\varepsilon$ goes to $0$. 
In the limit, $\phi_{\varepsilon,k}$ will depend only on the $x$-coordinates and the left-hand side of \eqref{proofLower5} will reduce to the energy of $\phi_{\varepsilon,k}$ under the 1D Hamiltonian $H^{1\mathrm{D}} $. 
Asymptotically the left-hand side of~\eqref{proofLower5} will be larger than the corresponding eigenvalue $\lambda_k^{1\mathrm{D}}$ (concluding the proof) if we can prove that $\phi_{\varepsilon,k}$ converges to a function of the $x$-coordinates only, vanishing on the diagonals.

To make this idea more rigorous, we express some properties of $\{{\phi_{\varepsilon,k}}\}_{\varepsilon} $ in terms of the weighted Sobolev spaces 
\begin{align*}
L^2_U\LR{\mathbb{R}^{2N}}&\colonequals L^2\LR{\mathbb{R}^{2N} ; U_1^2(y_1,\dots,y_N)\d \mathbf{x}_1\cdots\d \mathbf{x}_N},\nonumber\\
H_U^1\LR{\Lambda_{0}}&\colonequals H^1\LR{\Lambda_{0};U_1^2(y_1,\dots,y_N)\d \mathbf{x}_1\cdots\d \mathbf{x}_N} .
\end{align*}

\begin{prop}[\textbf{Convergence of $\{{\phi_{\varepsilon,k}}\}_{\varepsilon}$}] \label{ConvPhi}\mbox{}\\
After possibly extracting a subsequence, 
$$ \phi_{\varepsilon,k} \to \psi_{0,k}$$
strongly in $L^2_U(\Omega^N)$ for any bounded subset $\Omega\subset \mathbb{R}^2$ and weakly in $H_U^1\LR{\Lambda_{0}}$ with $\Lambda_0$ as in~\eqref{lam0}.
Furthermore, the limit 
$$
\psi_{0,k}(\bx_1,\dots,\bx_N) = \psi_{0,k}(x_1,\ldots,x_N) \in H^1({\mathbb{R}^N\backslash \mathbb{D}^{1\mathrm{D}}})
$$ 
has no dependence on the $y$-coordinates, and it satisfies the following property:
\begin{equation}\label{proofLower7}
    \liminf_{\varepsilon \to 0} \LR{\lambda^{2\mathrm{D}}_k - Ne_{\varepsilon} } 
    \ge \sum\limits_{j=1}^N \LR{ \int_{\mathbb{R}^N\backslash \mathbb{D}^{1\mathrm{D}}} \absolutevalue{\partial_{{x}_j}{\psi_{0,k}}}^2 + \int_{\mathbb{R}^N} |x_j| ^2 \absolutevalue{\psi_{0,k}}^2 } .
\end{equation}
\end{prop}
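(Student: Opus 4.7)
The plan is to extract a convergent subsequence from the uniform $H^1_U(\Lambda_0)$-bounds encoded in~\eqref{proofLower5}, identify the weak limit as $y$-independent, and then pass to the liminf term-by-term in~\eqref{proofLower4} using weak lower semicontinuity and Fatou's lemma.

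\smallskip

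\textbf{Uniform bounds and weak compactness.} First I would verify that $\{\phi_{\varepsilon,k}\}_\varepsilon$ is uniformly bounded in $H^1_U(\Lambda_0)$. The three terms on the left of~\eqref{proofLower5} are bounded by $\lambda_k^{1\mathrm{D}}$ independently of $\varepsilon$; the normalization $\|\Psi_k^{\mathrm{2D}}\|_{L^2(\R^{2N})}=1$ combined with the change of variables~\eqref{eq:rescale func} and the identity $\int U_\varepsilon^2 = \int U_1^2 = 1$ gives $\|\phi_{\varepsilon,k}\|_{L^2_U} = 1$ for every $\varepsilon$. Weak compactness then extracts a subsequence with $\phi_{\varepsilon,k} \rightharpoonup \psi_{0,k}$ in $H^1_U(\Lambda_0)$. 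Since on any bounded $\Omega\subset\R^2$ the weight $U_1$ is bounded above and below by positive constants, the weighted and unweighted $H^1$-norms on $\Omega^N\cap\Lambda_0$ are equivalent; Rellich-Kondrachov applied on each connected component of this bounded open set yields strong convergence in $L^2_U(\Omega^N)$ (the 1D diagonal is Lebesgue-negligible, so restriction to $\Omega^N$ and to $\Omega^N\cap\Lambda_0$ coincide).

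\smallskip

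\textbf{$y$-independence of the limit.} The $1/\varepsilon$ prefactor of the $\partial_{y_j}$-terms in~\eqref{proofLower5} forces $\|\partial_{y_j}\phi_{\varepsilon,k}\|_{L^2_U}^2 \le \varepsilon\,\lambda_k^{1\mathrm{D}} \to 0$. Lower semicontinuity of this norm under weak $H^1_U$-convergence gives $\partial_{y_j}\psi_{0,k} = 0$ in $\mathcal{D}'(\Lambda_0)$ for each $j$, so $\psi_{0,k}$ depends only on $x_1,\dots,x_N$. Because $\int_{\R^N} U_1^2(y_1,\dots,y_N)\,dy_1\cdots dy_N = 1$, the $y$-integrals factor out and the $x$-profile inherits $H^1$-regularity on $\R^N\setminus\mathbb{D}^{1\mathrm{D}}$ with
\[
\|\partial_{x_j}\psi_{0,k}\|_{L^2(\R^N\setminus\mathbb{D}^{1\mathrm{D}})}^2 = \|\partial_{x_j}\psi_{0,k}\|_{L^2_U(\Lambda_0)}^2, \qquad \|x_j\psi_{0,k}\|_{L^2(\R^N)}^2 = \|x_j\psi_{0,k}\|_{L^2_U(\R^{2N})}^2.
\]

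\smallskip

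\textbf{Passage to the liminf.} Starting from~\eqref{proofLower4}, the $(1/\varepsilon)\|\partial_{y_j}\phi_{\varepsilon,k}\|^2$ contribution is non-negative and can simply be dropped. For the $\partial_{x_j}$-terms, weak lower semicontinuity gives
\[
\liminf_{\varepsilon\to 0}\int_{\Lambda_0}|\partial_{x_j}\phi_{\varepsilon,k}|^2 U_1^2 \;\ge\; \int_{\Lambda_0}|\partial_{x_j}\psi_{0,k}|^2 U_1^2 \;=\; \int_{\R^N\setminus\mathbb{D}^{1\mathrm{D}}}|\partial_{x_j}\psi_{0,k}|^2.
\]
For the potential terms, the strong $L^2_U$-convergence on every bounded set $\Omega^N$ allows extraction of a further subsequence with pointwise a.e.\ convergence on $\R^{2N}$ (via a diagonal argument), whence Fatou's lemma yields
\[
\liminf_{\varepsilon\to 0}\int_{\R^{2N}}|x_j|^2|\phi_{\varepsilon,k}|^2 U_1^2 \;\ge\; \int_{\R^{2N}}|x_j|^2|\psi_{0,k}|^2 U_1^2 \;=\; \int_{\R^N}|x_j|^2|\psi_{0,k}|^2.
\]
Summing these bounds over $j$ and reinserting into~\eqref{proofLower4} yields~\eqref{proofLower7}.

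\smallskip

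The main technical point to watch is the Rellich step on the non-Lipschitz bounded set $\Omega^N\cap\Lambda_0$, which is handled componentwise and is unproblematic since the singular set $\mathbb{D}^{1\mathrm{D}}$ is a finite union of affine hyperplanes. A subtler conceptual point is that the proposition does not assert vanishing of $\psi_{0,k}$ on $\mathbb{D}^{1\mathrm{D}}$ (only $H^1$-regularity off the diagonal); the stronger vanishing-trace property, necessary to identify $\psi_{0,k}\in \mathcal{D}_q(H^{1\mathrm{D}})$ and thereby close the min-max comparison with $\lambda_k^{1\mathrm{D}}$, must come from a separate use of the many-anyon Hardy inequality (Theorem~\ref{0to2hardy}) and is presumably established elsewhere in the section.
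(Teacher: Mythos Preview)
Your proposal is correct and follows essentially the same route as the paper: uniform $H^1_U(\Lambda_0)$-bounds from~\eqref{proofLower5}, Banach--Alaoglu for weak compactness, $y$-independence of the limit from the $\varepsilon^{-1}$ prefactor, Rellich for local strong $L^2_U$-convergence, and then weak lower semicontinuity plus Fatou (via a diagonal-argument a.e.\ subsequence) to pass to the liminf in~\eqref{proofLower4}. Your closing remark that the vanishing on $\mathbb{D}^{1\mathrm{D}}$ is \emph{not} part of this proposition and requires the Hardy inequality separately is exactly right and matches the paper's subsequent structure.
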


\begin{proof}
From~\eqref{proofLower5} we deduce that $\{{\phi_{\varepsilon,k}}\}_{\varepsilon} $ is bounded in $ H_U^1({\Lambda_{0}})$. According to the Banach-Alaoglu Theorem, we can extract a (not relabeled) subsequence converging weakly to some function $\psi_{0,k} $ in the Hilbert space $H_U^1\LR{\Lambda_{0}}$. 
Because of the coefficient $\varepsilon^{-1}$ multiplying the $y$-derivatives in (\ref{proofLower5}), we deduce that the limit $\psi_{0,k}$ only depends on the $x$-coordinates.
Then we have
\begin{equation*}
     \liminf_{\varepsilon \to 0} \norm{\phi_{\varepsilon,k}}_{L_U^2\LR{\Lambda_{0}}}
    \ge \norm{\psi_{0,k}}_{L_U^2\LR{\Lambda_{0}}} 
    = \norm{\psi_{0,k}}_{L^2{({\mathbb{R}^N\backslash \mathbb{D}^{1\mathrm{D}}})}}
    = \norm{\psi_{0,k}}_{L^2{({\mathbb{R}^N})}},
\end{equation*}
\begin{equation} \label{dercontrol}
     \liminf_{\varepsilon \to 0} \norm{\partial_{x_j}\phi_{\varepsilon,k}}_{L_U^2\LR{\Lambda_{0}}}  
    \ge \norm{\partial_{x_j}\psi_{0,k}}_{L_U^2\LR{\Lambda_{0}}} 
    = \norm{\partial_{x_j}\psi_{0,k}}_{L^2{({\mathbb{R}^N\backslash \mathbb{D}^{1\mathrm{D}}})}},
    \quad \forall j. 
\end{equation}
Using Sobolev embeddings, $(\phi_{\varepsilon,k})_{\varepsilon}$ converges strongly to $\psi_{0,k} $ 
in the weighted Sobolev space 
$$
L^2_U\LR{ V } \colonequals  L^2\LR{ V \,;U_1^2(y_1,\dots,y_N)\d \mathbf{x}_1\cdots\d \mathbf{x}_N}
$$ 
for any bounded set $ V  \subset  \Lambda_0$, equivalently for any bounded set $ V  \subset  \mathbb{R}^{2N}$. 
After passing to a further subsequence, $(\phi_{\varepsilon,k})_{\varepsilon}$ also converges to $\psi_{0,k}$ almost everywhere on $ V $. By a diagonal argument, we can choose a subsequence such that $(\phi_{\varepsilon,k})_{\varepsilon}$ converges to $\psi_{0,k}$ almost everywhere on $\mathbb{R}^{2N}$.
Then, with the help of Fatou's lemma, we have 
\begin{equation} \label{potentialcontrol}
    \liminf_{\varepsilon \to 0} \int_{\mathbb{R}^{2N}} |x_j| ^2 \absolutevalue{\phi_{\varepsilon,k}}^2 U^2_{1}
    \ge \int_{\mathbb{R}^{2N}} |x_j| ^2 \absolutevalue{{\psi_{0,k}}}^2 U^2_{1} 
    = \int_{\mathbb{R}^N} |x_j| ^2 {\absolutevalue{\psi_{0,k}}}^2
    ,\quad \forall j.
\end{equation}
Combining (\ref{proofLower4}), (\ref{dercontrol}) and (\ref{potentialcontrol}), we obtain (\ref{proofLower7}) and conclude the proof.
\end{proof}

Applying the diamagnetic inequality to $\Phi_k$, we obtain 
$$
\abs{\phi_k} = \abs{\Phi_k} \in H^{1}_{loc}(\mathbb{R}^{2N})
$$  
and 
$$ \abs{D_j \Phi_k} \geq \abs{\grad_{\mathbf{x}_j}\abs{\Phi_k}}  = \abs{\grad_{\mathbf{x}_j}\abs{\phi_k}}.
$$
Then Proposition \ref{decoupE} implies
\begin{equation}\label{rmk}
     \lambda_k^{2\mathrm{D}} - N e_{\varepsilon} \ge \sum\limits_{j=1}^N \int_{\mathbb{R}^{2N}} \absolutevalue{\grad_{\mathbf{x}_j} \absolutevalue{\phi_{k}} }^2 U^2_{\varepsilon} + \sum\limits_{j=1}^N \int_{\mathbb{R}^{2N}} |x_j| ^2 \absolutevalue{\phi_{k}}^2U^2_{\varepsilon}.
\end{equation}
Using the upper bounds in Theorem \ref{thmUpperB} and rescaling the functions in $y$-direction, Inequality (\ref{rmk}) implies that $ \{ |{\phi_{\varepsilon,k}}| \}_{\varepsilon}$ is bounded in the weighted Sobolev space $H_U^1(\mathbb{R}^{2N}) $ instead of $H_U^1(\Lambda_0) $. 
Then a similar proof as that of Proposition \ref{ConvPhi} gives

\begin{prop}[\textbf{Convergence of $ \{ |{\phi_{\varepsilon,k}}| \}_{\varepsilon}$}] \label{propRmk}\mbox{}\\
After extracting a subsequence, $ \{|{\phi_{\varepsilon,k}}|\}_{\varepsilon} $ converges weakly in $ H^1_U(\mathbb{R}^{2N}) $ to $|{\psi_{0,k}}|$, which is in $ H^1(\mathbb{R}^{N})$.
\end{prop}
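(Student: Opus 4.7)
The plan is to mimic the argument in the proof of Proposition~\ref{ConvPhi}, but using the stronger \emph{global} bound on $\mathbb{R}^{2N}$ that follows from~\eqref{rmk} via the diamagnetic inequality, rather than the bound~\eqref{proofLower5} which holds only on $\Lambda_0$. Concretely, I would rescale the $y$-variables in~\eqref{rmk} as in~\eqref{eq:rescale func} and combine the result with $\lambda^{2\mathrm{D}}_k \leq Ne_\varepsilon + \lambda^{1\mathrm{D}}_k$ from Theorem~\ref{thmUpperB}, to obtain
\begin{equation*}
\sum_{j=1}^N \int_{\mathbb{R}^{2N}} \LR{\absolutevalue{\partial_{x_j}|\phi_{\varepsilon,k}|}^2 + \frac{1}{\varepsilon}\absolutevalue{\partial_{y_j}|\phi_{\varepsilon,k}|}^2}U_1^2 + \sum_{j=1}^N \int_{\mathbb{R}^{2N}} |x_j|^2 |\phi_{\varepsilon,k}|^2 U_1^2 \leq \lambda_k^{1\mathrm{D}}.
\end{equation*}
In particular $\{|\phi_{\varepsilon,k}|\}_\varepsilon$ is bounded in $H^1_U(\mathbb{R}^{2N})$ (and not merely in $H^1_U(\Lambda_0)$, which is the key improvement).

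Banach-Alaoglu in the Hilbert space $H^1_U(\mathbb{R}^{2N})$ then furnishes a (not relabeled) subsequence converging weakly to some $\chi\in H^1_U(\mathbb{R}^{2N})$. The diverging factor $\varepsilon^{-1}$ in front of the $y$-derivatives forces $\partial_{y_j}\chi\equiv 0$ for every $j$, so $\chi$ depends only on the $x$-variables. To identify $\chi$, I would recall that the diagonal argument in the proof of Proposition~\ref{ConvPhi} provides, after a further subsequence, $\phi_{\varepsilon,k}\to\psi_{0,k}$ pointwise a.e.\ on $\mathbb{R}^{2N}$; passing to absolute values preserves this convergence, and uniqueness of pointwise and weak limits yields $\chi=|\psi_{0,k}|$. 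Since $|\psi_{0,k}|$ is $y$-independent and $U_1^2$ integrates to one in $y$, the $H^1_U(\mathbb{R}^{2N})$-norm of this function reduces to its $H^1(\mathbb{R}^N)$-norm, whence $|\psi_{0,k}|\in H^1(\mathbb{R}^N)$.

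The point I expect to require the most care is the \emph{contrast} with Proposition~\ref{ConvPhi}: that statement only places $\psi_{0,k}$ in $H^1(\mathbb{R}^N\setminus\mathbb{D}^{1\mathrm{D}})$ because the gauge phase $e^{\mathrm{i}\alpha S}$ is genuinely singular on the diagonals, whereas here the diamagnetic inequality applied to $\Phi_k$ (which requires $|\Phi_k|\in H^1_{\mathrm{loc}}(\mathbb{R}^{2N})$, available from Corollary~\ref{cptD} together with \cite[Lemma~4]{LunSol-14}) wipes out the phase altogether and delivers control of $\nabla|\phi_k|$ across $\mathbb{D}^{2\mathrm{D}}$. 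This ability to cross the diagonals is precisely the point of stating Proposition~\ref{propRmk} as a supplement to Proposition~\ref{ConvPhi}.
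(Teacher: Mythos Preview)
Your proposal is correct and follows essentially the same approach as the paper: the paper derives~\eqref{rmk} via the diamagnetic inequality, combines it with the upper bound of Theorem~\ref{thmUpperB} and the $y$-rescaling to obtain boundedness of $\{|\phi_{\varepsilon,k}|\}_\varepsilon$ in $H^1_U(\mathbb{R}^{2N})$, and then states that ``a similar proof as that of Proposition~\ref{ConvPhi} gives'' the conclusion. You have spelled out precisely that similar proof (Banach--Alaoglu, $y$-independence from the $\varepsilon^{-1}$ factor, identification of the limit via the a.e.\ convergence already obtained in Proposition~\ref{ConvPhi}), and your final paragraph correctly isolates the reason the diamagnetic route upgrades $\Lambda_0$ to all of $\mathbb{R}^{2N}$.
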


Note that the limit $\psi_{0,k}$ in Proposition \ref{ConvPhi} may not be unique, i.e. it depends on the extracted subsequence. We will later recover some form of uniqueness by proving that these limits are all eigenfunctions of $H^{1\mathrm{D}}$ corresponding to the eigenvalues $\lambda^{1\mathrm{D}}_k$. Hence, the limit $\psi_{0,k}$ is unique if and only if the energy level  $\lambda^{1\mathrm{D}}_k$ is non-degenerate.

\bigskip

In order to finish the proof for the target lower bounds, it principally remains to prove that the right-hand side of Inequation (\ref{proofLower7}) is bounded from below by $\lambda^{1\mathrm{D}}_k $, i.e. 
\begin{equation}
    \sum\limits_{j=1}^N \LR{ \int_{\mathbb{R}^N\backslash \mathbb{D}^{1\mathrm{D}}} \absolutevalue{\partial_{{x}_j}{\psi_{0,k}}}^2 + \int_{\mathbb{R}^N} |x_j| ^2 \absolutevalue{\psi_{0,k}}^2 }\ge \lambda^{1\mathrm{D}}_k . \label{simpLower}
\end{equation}
The inequality of (\ref{simpLower}) follows from the next series of lemmas. 
The first four aim at proving that the limit $\psi_{0,k}$ is in the quadratic form domain of $H^{1\mathrm{D}}$, i.e. $\psi_{0,k} \in \mathcal{D}_q(H^{1\mathrm{D}})$. 
The main issue is to prove that the limit $\psi_{0,k}$ vanishes on diagonals (Proposition~\ref{vanish}). The last two lemmas are to check that the limit $\psi_{0,k}$ is indeed an eigenfunction of $ H^{1\mathrm{D}} $ corresponding to the eigenvalue $\lambda^{1\mathrm{D}}_k$.

\begin{lemma}[\textbf{Being $H^1$ accross diagonals}] \label{inH1}\mbox{}\\
Consider $\varphi \in L^2_{\mathrm{sym}}(\mathbb{R}^N)$ such that $ \varphi \in H^1({\mathbb{R}^N\backslash \mathbb{D}^{1\mathrm{D}}})$. Then $\varphi$ is in $H^1(\mathbb{R}^N)$.
\end{lemma}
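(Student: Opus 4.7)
The only obstruction to having $\varphi\in H^1(\mathbb{R}^N)$, given that $\varphi\in L^2(\mathbb{R}^N)$ already has $L^2$ weak derivatives away from $\mathbb{D}^{1\mathrm{D}}$, is potential jumps of its trace across the codimension-one hyperplanes $H_{jl}\colonequals\{x_j=x_l\}$ that make up $\mathbb{D}^{1\mathrm{D}}$. I intend to exploit the bosonic symmetry of $\varphi$ to rule out such jumps, so that the distributional gradient of $\varphi$ on the full space coincides with its chamber-wise weak gradient, which lies in $L^2(\mathbb{R}^N)$ by hypothesis.

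First I would decompose
$$ \mathbb{R}^N\setminus\mathbb{D}^{1\mathrm{D}}=\bigcup_{\sigma\in S_N}C_\sigma,\qquad C_\sigma\colonequals\{x_{\sigma(1)}<\cdots<x_{\sigma(N)}\}, $$
into its $N!$ Weyl chambers. Each $C_\sigma$ is a Lipschitz open set, so invoking the trace theorem~\eqref{traceop} applied to $\varphi|_{C_\sigma}\in H^1(C_\sigma)$, the restriction of $\varphi$ to each chamber admits an $L^2_{\mathrm{loc}}$-trace on every face of $\partial C_\sigma$. Two chambers $C_\sigma$ and $C_{\sigma'}$ sharing a face contained in $H_{jl}$ are related by the transposition $\tau_{jl}$ swapping the $j$-th and $l$-th coordinates. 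Because $\varphi$ is symmetric, $\varphi\circ\tau_{jl}=\varphi$, and the traces of $\varphi|_{C_\sigma}$ and $\varphi|_{C_{\sigma'}}$ on the common face thus agree as $L^2$ functions.

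Then, for any test function $\phi\in C^\infty_c(\mathbb{R}^N)$, I would compute $\int_{\mathbb{R}^N}\varphi\,\partial_{x_j}\phi$ by splitting it as a sum over chambers and integrating by parts on each. The resulting boundary contributions regroup, for every face shared by two adjacent chambers, as the jump of $\varphi$ across that face multiplied by the normal component of $\phi$; by the matching-trace property just established, every such jump vanishes. What remains are the volume terms, which show that the distributional derivative $\partial_{x_j}\varphi$ on $\mathbb{R}^N$ coincides with the a.e.-defined function whose restriction to each $C_\sigma$ equals $\partial_{x_j}(\varphi|_{C_\sigma})$. By assumption this function lies in $L^2(\mathbb{R}^N)$, giving $\varphi\in H^1(\mathbb{R}^N)$.

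The main subtlety I anticipate is ensuring the trace-matching step is valid with only the $H^1$-regularity on each chamber (rather than pointwise continuity across $\mathbb{D}^{1\mathrm{D}}$). The distributional formulation above, together with the trace theorem on Lipschitz domains~\cite{Evans}, resolves this: the symmetry of $\varphi$ alone provides the cancellation of boundary terms, without requiring any stronger regularity.
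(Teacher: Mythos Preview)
Your approach is correct and is, in fact, the ``classical'' route the paper alludes to in its opening sentence (``$\varphi$ is in $H^1$ outside of an interface, and continuous across the interface, because of the symmetry constraint. That $\varphi$ is globally in $H^1$ is then a classical result.'') before choosing a different detailed argument. The paper instead proves the lemma via the difference-quotient characterization of $H^1$: it reduces by symmetry to the single convex chamber $\Lambda=\{x_1<\cdots<x_N\}$, and for $\mathbf{X}\in\Lambda$ replaces $\mathbf{X}+h\mathbf{e}$ by its symmetric image $\mathbf{X}'\in\overline{\Lambda}$, then applies Newton--Leibniz along the segment $[\mathbf{X},\mathbf{X}']\subset\Lambda$ and bounds the result by $N!\,\|\varphi\|_{H^1(\Lambda)}^2$.

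Your trace-matching/integration-by-parts argument is the more standard ``gluing across an interface'' technique and is arguably cleaner conceptually: the reflection $\tau_{jl}$ fixes the shared face pointwise and exchanges adjacent chambers, so symmetry of $\varphi$ forces the one-sided traces to agree, killing the boundary terms. The paper's difference-quotient route avoids invoking one-sided traces on unbounded Lipschitz domains altogether, at the cost of the slightly involved reordering map $\mathbf{X}\mapsto\mathbf{X}'$ and checking injectivity of $f^t_{\mathbf{e},h}$ on each $\Lambda_\sigma$. One minor point to tighten in your write-up: the reference \eqref{traceop} is the trace operator on $H^1(\mathbb{R}^N)$, not the one-sided trace on $H^1(C_\sigma)$ you actually need; you should cite the Lipschitz-domain trace theorem directly (as you do at the end), and note that compact support of $\phi$ lets you work on the bounded convex sets $C_\sigma\cap B_R$.
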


\begin{proof}
We have that $\varphi$ is in $H^1$ outside of an interface, and continuous across the interface, because of the symmetry constraint. That $\varphi$ is globally in $H^1$ is then a classical result.

It suffices to show that~\cite[Theorem 3 in Section 5.8.2]{Evans-98}, for any unit $\mathbf{e}\in \mathbb{R}^N$ and $h\in (0,1)$, there exists some constant $C$ independent of $h$ and $\mathbf{e}$ such that
\begin{equation*}
    \int_{\mathbb{R}^N} \absolutevalue{ \frac{\varphi(\mathbf{X}+h\mathbf{e}) - \varphi(\mathbf{X})}{h}}^2 \d \mathbf{X}\le C.
\end{equation*}
Since $\varphi$ is symmetric on $\mathbb{R}^N$, it is equivalent to prove that, under the same notations,
\begin{equation*}
    \int_{\Lambda} \absolutevalue{ \frac{\varphi(\mathbf{X}+h\mathbf{e}) - \varphi(\mathbf{X})}{h}}^2 \d \mathbf{X}\le C,
\end{equation*}
where 
$$
\Lambda \colonequals \{ ({x}_1,\dots,{x}_N) \in \mathbb{R}^{N} \mid x_1 < \cdots <x_N \}. 
$$
Fix $h\in(0,1)$ and a unit $\mathbf{e}$. Let $$\mathbf{X} = (x_1,\dots,x_N) \in \Lambda \quad \mbox{ and } \quad \mathbf{X}+h\mathbf{e} = (x'_1,\dots,x'_N) .$$
There exists a permutation $\tau$ on $\{ 1,\dots,N \}$ such that 
$$\mathbf{X}'\colonequals (x'_{\tau(1)},\dots,x'_{\tau(N)}) \in \overline{\Lambda} .$$ 
We define 
\begin{equation*}
    h(\mathbf{X}) \colonequals \abs{\mathbf{X}'- \mathbf{X}}, \quad \mathbf{e}(\mathbf{X}) \colonequals \frac{\mathbf{X}'- \mathbf{X}}{h(\mathbf{X})}.
\end{equation*}
Then we have 
$$ \varphi \LR{\mathbf{X}+h(\mathbf{X})\mathbf{e}(\mathbf{X})} = \varphi (\mathbf{X}') = \varphi (\mathbf{X}+h\mathbf{e}). $$
Under this construction, $\mathbf{X}+h(\mathbf{X})\mathbf{e}(\mathbf{X})$ is actually in $\Lambda$ for almost every $\mathbf{X}\in \Lambda$, that is, except for a zero measure set in $\mathbb{R}^N$, $\mathbf{X}+h(\mathbf{X})\mathbf{e}(\mathbf{X})$ and $\mathbf{X}$ are both in $\Lambda$. 
Since a zero-measure set does not matter in our proof, we will ignore it from now on.

On the other hand, the domain $\Lambda$ is clearly convex, and  $\varphi$ is in $H^1(\Lambda)$. So we can apply the Newton-Leibniz theorem \cite[Section 4.9.2]{evans2015measure} to $\varphi$ on $\Lambda$ as follows:
\begin{equation}\label{NLT}
    \varphi(\mathbf{X}+h(\mathbf{X})\mathbf{e}(\mathbf{X})) - \varphi(\mathbf{X}) = h(\mathbf{X}) \int_0^1 \grad \varphi (\mathbf{X} + t \cdot h(\mathbf{X})\mathbf{e}(\mathbf{X})) \cdot \mathbf{e}(\mathbf{X}) \d t.
\end{equation}
One can check that  $|{h(\mathbf{X})}| \le h$. Thus, the identity (\ref{NLT}) above leads to
\begin{equation}\label{afterNLT}
    \begin{split}
        \absolutevalue{\frac{\varphi(\mathbf{X}+h\mathbf{e}) - \varphi(\mathbf{X}) }{h}}^2 & = \absolutevalue{\frac{\varphi(\mathbf{X}+h(\mathbf{X})\mathbf{e}(\mathbf{X})) - \varphi(\mathbf{X}) }{h}}^2 \\
        & \le \absolutevalue{ \int_0^1 \grad \varphi (\mathbf{X} + t \cdot h(\mathbf{X})\mathbf{e}(\mathbf{X})) \cdot \mathbf{e}(\mathbf{X}) \d t}^2\\
        & \le \int_0^1 \absolutevalue{\grad \varphi}^2(\mathbf{X} + t \cdot h(\mathbf{X})\mathbf{e}(\mathbf{X})) \d t.
    \end{split}
\end{equation}
It is not hard to verify that for each $\mathbf{e}$ and $h$, the function 
$$f_{\mathbf{e},h}^t: \mathbf{X} \mapsto \mathbf{X} + t \cdot h(\mathbf{X})\mathbf{e}(\mathbf{X})$$ 
is injective or trivial in the domain 
$$ \Lambda_{\sigma} \colonequals \{ \mathbf{X} \in \Lambda : \mathbf{X}+h\mathbf{e} \in \sigma(\Lambda_{}) \} \subset \Lambda $$ 
for each permutation $\sigma$ on $\{1,\dots, N\}$, where 
$$\sigma(\Lambda_{}) \colonequals \{ ({x}_1,\dots,{x}_N) \in \mathbb{R}^{N} \mid (x_{\sigma(1)} , \dots ,x_{\sigma(N)}) \in \Lambda \}.$$
With this observation, we can integrate over $\Lambda$ on both sides of~(\ref{afterNLT}) and obtain  
\begin{equation*}
    \begin{split}
        \int_{\Lambda} \absolutevalue{ \frac{\varphi(\mathbf{X}+h\mathbf{e}) - \varphi(\mathbf{X})}{h}}^2 \d \mathbf{X} & \le \int_{\Lambda} \int_0^1 \absolutevalue{\grad \varphi}^2(\mathbf{X} + t \cdot h(\mathbf{X})\mathbf{e}(\mathbf{X})) \d t \d \mathbf{X}\\
        & = \sum_{\sigma} \int_0^1 \int_{\Lambda_{\sigma}} \absolutevalue{\grad \varphi}^2(\mathbf{X} + t \cdot h(\mathbf{X})\mathbf{e}(\mathbf{X}))\d \mathbf{X} \d t \\
        & \le N! \cdot  \int_0^1 \int_{\Lambda} \absolutevalue{\grad \varphi}^2(\mathbf{X})\d \mathbf{X} \d t \\
        & \le N! \cdot  \norm{\varphi}_{H^1(\Lambda)}^2,
    \end{split}
\end{equation*}
which concludes the proof of Proposition~\ref{inH1}.
\end{proof}

From Proposition \ref{ConvPhi}, we know that the limit $\psi_{0,k} \in H^1({\mathbb{R}^N\backslash \mathbb{D}^{1\mathrm{D}}})$, and that with the help of the upper bounds in Theorem \ref{thmUpperB}, $x_j\psi_{0,k} \in L^2(\mathbb{R}^N)$ for all $j$.  
Since $\Psi_k^{2\mathrm{D}}$, $U_{\varepsilon}$ and $S$ are all symmetric, 
$$
\phi_{k} = \Psi_k^{2\mathrm{D}} U_{\varepsilon}^{-1} e^{\mathrm{i}\alpha S}
$$
is clearly symmetric, and so are $\phi_{\varepsilon,k}$ and its limit $\psi_{0,k}$. 
With the help of Proposition~\ref{inH1}, we obtain that the limit $\psi_{0,k} \in H^1(\mathbb{R}^{N})$.
Therefore, to prove that $\psi_{0,k} \in \mathcal{D}_q(H^{1\mathrm{D}}) $, it remains to prove that $ \psi_{0,k}  $ has zero trace on $\mathbb{D}^{1\mathrm{D}} $, i.e. $\psi_{0,k} \in H^1_0({\mathbb{R}^N\backslash \mathbb{D}^{1\mathrm{D}}}) $.
Since we already know that $\psi_{0,k}$ is symmetric, it suffices to prove that the limit $ \psi_{0,k}$ vanishes on the diagonal $\{x_1=x_2\}$.

\begin{lemma}[\textbf{Estimate near $\{ x_1=x_2 \}$}]\mbox{} \label{nearDiag}\\
Consider $U_1$ defined in~\eqref{defU} for $\varepsilon=1$ and $\phi_{\varepsilon,k} $ defined in~\eqref{eq:rescale func}. Then we have
    \begin{equation*}\label{afterseparate}
   \int_{G^x_{\varepsilon} \times G^y} { \absolutevalue{\phi_{\varepsilon,k}}^2 U_{1}^2 } \le C' \varepsilon
\end{equation*}
for some constant $C'>0$  independent of $\varepsilon$, where
\begin{equation*}\label{defGx}
    G^x_{\varepsilon} \colonequals \left\{ (x_1,\dots,x_N)\in\mathbb{R}^N : \left| x_1-x_2\right| < {\sqrt{\varepsilon}}/{2} , \, \left| x_1+x_2-2x_j\right| > \sqrt{\varepsilon},\; \forall j \ge 3 \right\}
\end{equation*}
and
$$ G^y \colonequals \left\{ (y_1,\dots,y_N)\in\mathbb{R}^N : |y_j| < {1/4} , \forall j \right\}. $$ 
\end{lemma}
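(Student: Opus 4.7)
The plan is to undo the $\sqrt{\varepsilon}$-rescaling in $y$, reduce the claim to an $L^2$ bound for $\Psi_k^{\mathrm{2D}}$ on a product set on which $|\mathbf{x}_1-\mathbf{x}_2|^2<\varepsilon/2$, and then gain the factor $\varepsilon$ via a \emph{weighted} version of the isolated two-anyon Hardy inequality. Using $U_\varepsilon(\sqrt{\varepsilon}\,y)=\varepsilon^{-N/4}U_1(y)$, the substitution $y_j=\sqrt{\varepsilon}\,y_j'$ identifies
\[
\int_{G^x_\varepsilon\times G^y}|\phi_{\varepsilon,k}|^2 U_1^2 = \int_{\mathcal{R}_\varepsilon}|\Psi_k^{\mathrm{2D}}|^2,\qquad \mathcal{R}_\varepsilon\colonequals G^x_\varepsilon\times\widetilde{G}^y_\varepsilon,\quad \widetilde{G}^y_\varepsilon\colonequals\{|y_j|<\sqrt{\varepsilon}/4,\;\forall j\}.
\]
On $\mathcal{R}_\varepsilon$ one reads off $|\mathbf{x}_1-\mathbf{x}_2|^2=(x_1-x_2)^2+(y_1-y_2)^2<\varepsilon/2$, while $|\mathbf{x}_1+\mathbf{x}_2-2\mathbf{x}_j|\ge|x_1+x_2-2x_j|>\sqrt{\varepsilon}$ for $j\ge 3$, so $\mathcal{R}_\varepsilon\subset E_{\sqrt{\varepsilon/2}}$ in the notation of Theorem~\ref{isolatedHineq}. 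Since $|\mathbf{x}_1-\mathbf{x}_2|^{-2}\ge 2/\varepsilon$ on $\mathcal{R}_\varepsilon$,
\[
\int_{\mathcal{R}_\varepsilon}|\Psi_k^{\mathrm{2D}}|^2 \le \frac{\varepsilon}{2}\int_{\mathcal{R}_\varepsilon}\frac{|\Phi_k|^2 U_\varepsilon^2}{|\mathbf{x}_1-\mathbf{x}_2|^2}.
\]

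The main step will be the weighted Hardy estimate
\[
\int_{E_{\sqrt{\varepsilon/2}}}\LR{|D_1\Phi_k|^2+|D_2\Phi_k|^2}U_\varepsilon^2 \ge e^{-1/4}C_\alpha \int_{E_{\sqrt{\varepsilon/2}}}\frac{|\Phi_k|^2 U_\varepsilon^2}{|\mathbf{x}_1-\mathbf{x}_2|^2},
\]
with constant independent of $\varepsilon$. I would prove it by rerunning the proof of Theorem~\ref{isolatedHineq}: change to center-of-mass/relative coordinates $(\mathbf{R},\mathbf{r})$ for particles $1,2$ and, for each fixed $(\mathbf{R},\mathbf{x}_3,\dots,\mathbf{x}_N)$, apply the 2D magnetic bosonic Hardy of \cite[Lemma~2]{LunSol-13a} to $\mathbf{r}\mapsto\Phi_k$ on the disk $|\mathbf{r}|<\sqrt{\varepsilon/2}$. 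The new ingredient is the weight: writing $U_\varepsilon^2=u_\varepsilon(y_1)^2 u_\varepsilon(y_2)^2\prod_{j\ge3}u_\varepsilon(y_j)^2$, only the first factor depends on $\mathbf{r}$, and an explicit Gaussian computation yields
\[
u_\varepsilon(R_y+r_y/2)^2 u_\varepsilon(R_y-r_y/2)^2 = (\pi\varepsilon)^{-1}\exp\LR{-\frac{2R_y^2}{\varepsilon}}\exp\LR{-\frac{r_y^2}{2\varepsilon}}.
\]
On $|\mathbf{r}|<\sqrt{\varepsilon/2}$ one has $r_y^2/(2\varepsilon)\le 1/4$, so the $\mathbf{r}$-dependent exponential lies in $[e^{-1/4},1]$, i.e.\ is equivalent to its $\mathbf{r}=0$ value up to a factor $e^{1/4}$. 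Sandwiching the unweighted 2D Hardy in $\mathbf{r}$ between these two constants, then integrating out the $\mathbf{r}$-independent part of $U_\varepsilon^2$, yields the displayed weighted inequality.

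Combining the two displayed bounds with $\sum_{j=1}^N\int|D_j\Phi_k|^2 U_\varepsilon^2\le\lambda_k^{1\mathrm{D}}$ (which follows from Proposition~\ref{decoupE} and Theorem~\ref{thmUpperB}) gives
\[
\int_{G^x_\varepsilon\times G^y}|\phi_{\varepsilon,k}|^2 U_1^2 \le \frac{e^{1/4}\lambda_k^{1\mathrm{D}}}{2C_\alpha}\,\varepsilon,
\]
so one may take $C'=e^{1/4}\lambda_k^{1\mathrm{D}}/(2C_\alpha)$. The principal difficulty is the weighted Hardy step; it is made feasible precisely because the threshold $1/4$ in the definition of $G^y$ is calibrated against $\sqrt{\varepsilon}/2$ in $G^x_\varepsilon$, so that the Gaussian weight $U_\varepsilon^2$ stays within a bounded range on the relative-coordinate disk $|\mathbf{r}|<\sqrt{\varepsilon/2}$ governing the isolated region $\mathcal{R}_\varepsilon$.
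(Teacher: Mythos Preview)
Your argument is correct and uses the same core inputs as the paper---the energy decoupling of Proposition~\ref{decoupE} combined with Theorem~\ref{thmUpperB} to bound $\sum_j\int|D_j\Phi_k|^2U_\varepsilon^2$, plus the isolated two-anyon Hardy inequality---but you handle the Gaussian weight $U_\varepsilon^2$ differently. The paper first restricts to the slab $\Omega_\beta^N=\{|y_j|<\beta\}$ with $\beta=\sqrt{\varepsilon}$, uses the crude two-sided bound $(\pi\varepsilon)^{-N/2}e^{-N}\le U_\varepsilon^2\le(\pi\varepsilon)^{-N/2}$ there to strip the weight, applies the \emph{unweighted} Theorem~\ref{isolatedHineq} to $\Phi_k$ on $E_\beta\subset\Omega_\beta^N$, and then restores the weight; this costs a harmless factor $e^N$. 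You instead keep the weight throughout and exploit the exact Gaussian factorization $u_\varepsilon(y_1)^2u_\varepsilon(y_2)^2=(\pi\varepsilon)^{-1}e^{-2R_y^2/\varepsilon}e^{-r_y^2/(2\varepsilon)}$ in center-of-mass/relative coordinates: the $\mathbf{r}$-dependent factor lies in $[e^{-1/4},1]$ on the relevant disk, so the unweighted Hardy in $\mathbf{r}$ can be sandwiched and then integrated against the $\mathbf{r}$-independent remainder of $U_\varepsilon^2$. Your route gives the sharper explicit constant $C'=e^{1/4}\lambda_k^{1\mathrm{D}}/(2C_\alpha)$ and avoids carrying powers of $(\pi\varepsilon)^{-N/2}$; the paper's route is slightly more elementary in that it never needs the Gaussian splitting. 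One small imprecision: for fixed $(\mathbf{R},\mathbf{x}_3,\dots,\mathbf{x}_N)$ the actual integration disk in $E_{\sqrt{\varepsilon/2}}$ has radius $\min\{\sqrt{\varepsilon/2},2|\mathbf{R}-\mathbf{x}_3|,\dots\}$ rather than $\sqrt{\varepsilon/2}$, but since this smaller disk is still contained in $\{|\mathbf{r}|<\sqrt{\varepsilon/2}\}$ your weight bound $e^{-r_y^2/(2\varepsilon)}\in[e^{-1/4},1]$ and hence the sandwich remain valid.
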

\begin{proof}
Thanks to the upper bounds in Theorem \ref{thmUpperB} and the energy decoupling in Proposition \ref{decoupE}, there is a constant $C$ independent of $\varepsilon$ such that 
\begin{equation}\label{propProof1}
\sum_{j=1}^N\int_{\mathbb{R}^{2N}}{ \absolutevalue{D_j\Phi_{k}}^2 } U^2_{\varepsilon} \le C,
\end{equation}
where $\Psi_k^{2\mathrm{D}} = \Phi_k U_{\varepsilon} $ with the same notation as above.

Consider the region where $y$ is small, i.e. for $\beta >0$, consider 
$$
\Omega_{\beta} \colonequals \left\{(x,y)\in\mathbb{R}^2 \mid \absolutevalue{y} < \beta  \right\}, 
$$ 
and observe that in the region $\Omega_{\beta}^N$ (recall the definition of $u_{\varepsilon} $ in~\eqref{defu}), 
\begin{equation} \label{Ucontrol}
    u^{2N}_{\varepsilon}(\beta)  \le U_{\varepsilon}^2 \le \LR{\sqrt{\pi \varepsilon}}^{-N},
\end{equation} 
then we can take out the $y$-direction ground state $U_{\varepsilon}$ from the integral in (\ref{propProof1}) as follows:
\begin{equation}\label{afterUout}
    \LR{\sqrt{\pi \varepsilon}}^{-N}\sum_{j=1}^N\int_{\Omega_{\beta}^N}{ \absolutevalue{D_j\Phi_{k}}^2 } \le C    e^{\frac{\beta^2N}{\varepsilon}}.
\end{equation}

We define an open set $E$ in $ \mathbb{R}^{2(N-1)}$,
$$
E^{} \colonequals \left\{ (\mathbf{y}_2, \dots,\mathbf{y}_{N}) \in \mathbb{R}^{2(N-1)} : \mathbf{y}_j = (x_j,y_j), \, |y_j|<\beta \right\},
$$
and then define 
\begin{multline*}
    E_\beta \colonequals \Big\{ \left(\mathbf{x}_1,\dots,\mathbf{x}_N\right) \in \mathbb{R}^{2N}: \left(\mathbf{x}_1+\mathbf{x}_2, \mathbf{x}_3,\dots,\mathbf{x}_N\right) \in E, \\
    \absolutevalue{\mathbf{x}_1-\mathbf{x}_2} < \min \left\{ \beta, \vert\mathbf{x}_1+\mathbf{x}_2 - 2 \mathbf{x}_3\vert, \dots, \vert\mathbf{x}_1+\mathbf{x}_2 - 2 \mathbf{x}_N\vert \right\}  \Big\}.
\end{multline*}
One can check that $E^{}_{\beta} $ is contained in $\Omega_{\beta}^N$. Applying the isolated Hardy inequality in Theorem~\ref{isolatedHineq} to the function $ \Phi_k$ and the domain $E_{\beta}$, we obtain that
\begin{equation} \label{afterHardy}
    C_{\alpha}^{-1} \int_{E_{\beta}} \LR{ \absolutevalue{D_1 \Phi_k}^2 +\absolutevalue{D_2 \Phi_k}^2 } \ge \int_{E_{\beta}} \frac{\absolutevalue{\Phi_k}^2}{|{\mathbf{x}_1-\mathbf{x}_2}|^2} \ge \frac{1}{\beta^2} \int_{E_{\beta}} {\absolutevalue{\phi_k}^2}.
\end{equation}
Combining (\ref{Ucontrol}), (\ref{afterUout}) and (\ref{afterHardy}), we deduce
$$\int_{E_{\beta}} { \absolutevalue{\phi_k}^2 U_{\varepsilon}^2 } \le C C_{\alpha}^{-1} \beta^2   e^{\frac{\beta^2N}{\varepsilon}},$$
which after rescaling in the $y$-direction, $y\mapsto\sqrt{\varepsilon}y$, yields
\begin{equation} \label{afterRescale}
   \int_{\widetilde{E_{\beta}}} { \absolutevalue{\phi_{\varepsilon,k}}^2 U_{1}^2 } \le C C_{\alpha}^{-1} \beta^2   e^{\frac{\beta^2N}{\varepsilon}}
\end{equation}
for 
$$ \widetilde{E_{\beta}} \colonequals \left\{ \left(x_1,y_1,\dots,x_N,y_N\right) \in \mathbb{R}^{2N} :  \left(x_1,\sqrt{\varepsilon}y_1,\dots,x_N,\sqrt{\varepsilon}y_N\right) \in E_{\beta} \right\} .$$
Letting 
$$ \beta = \sqrt{\varepsilon}, $$
~\eqref{afterRescale} becomes 
\begin{equation*} \label{}
   \int_{\widetilde{E_{\beta}}} { \absolutevalue{\phi_{\varepsilon,k}}^2 U_{1}^2 } \le C'\varepsilon  ,
\end{equation*}
for some constant $C'$ independent of $\varepsilon$. One can check that $G^x_{\varepsilon}\times G^y $ is contained in $\widetilde{E_{\beta}}$ when $\beta=\sqrt{\varepsilon}$, which concludes the proof of Lemma~\ref{nearDiag}.
\end{proof}

We shall use a simple consequence of the Sobolev embedding:

\begin{lemma}[\textbf{Bounds in $W^{1,p} (\R^N)$}] \label{W1pbounded}\mbox{}\\
    Let $\{\psi_{\varepsilon}\}_{\varepsilon}$ be bounded in $H^1(\mathbb{R}^N)$. Then $ \{ |\psi_{\varepsilon}|^2 \}_{\varepsilon} $ is bounded in $W^{1,p}(G) $ for any bounded open subset $G$ in $\mathbb{R}^N $ and for $p \in [1 , N/(N-1)]$.
\end{lemma}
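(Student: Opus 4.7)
The plan is to reduce the claim to the Sobolev embedding via the distributional chain rule and Hölder's inequality. The starting point is that for $\psi_\varepsilon \in H^1(\mathbb{R}^N)$ one has
\begin{equation*}
\grad |\psi_\varepsilon|^2 = 2\operatorname{Re}\LR{\overline{\psi_\varepsilon}\grad \psi_\varepsilon}
\end{equation*}
in the sense of distributions, yielding the pointwise a.e.\ bound
$
\left|\grad |\psi_\varepsilon|^2\right| \le 2|\psi_\varepsilon||\grad \psi_\varepsilon|.
$
This already reduces the control of the gradient of $|\psi_\varepsilon|^2$ to a control of the product $|\psi_\varepsilon||\grad \psi_\varepsilon|$ in $L^p$.

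Next, I would apply Hölder's inequality with the dual pair $(q,2)$ satisfying $\tfrac{1}{p} = \tfrac{1}{q} + \tfrac{1}{2}$, i.e.\ $q = 2p/(2-p)$. This gives
\begin{equation*}
\norm{\grad |\psi_\varepsilon|^2}_{L^p(G)} \le 2\,\norm{\psi_\varepsilon}_{L^q(G)}\norm{\grad \psi_\varepsilon}_{L^2(G)}.
\end{equation*}
The range $p\in[1, N/(N-1)]$ precisely translates to $q\in[2, 2N/(N-2)]$, which is the allowable range of the Sobolev embedding $H^1(\mathbb{R}^N)\hookrightarrow L^q(\mathbb{R}^N)$. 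Hence the right-hand side above is bounded, uniformly in $\varepsilon$, by a constant multiple of $\norm{\psi_\varepsilon}_{H^1(\mathbb{R}^N)}^2$, which is bounded by hypothesis.

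It remains to bound $\norm{|\psi_\varepsilon|^2}_{L^p(G)}$. Since $G$ is bounded and $|\psi_\varepsilon|^2 \in L^{N/(N-2)}(\mathbb{R}^N)$ by the Sobolev embedding, Hölder's inequality on the bounded set $G$ yields
\begin{equation*}
\norm{|\psi_\varepsilon|^2}_{L^p(G)} \le |G|^{\frac{1}{p}-\frac{N-2}{N}} \norm{\psi_\varepsilon}_{L^{2N/(N-2)}(\mathbb{R}^N)}^2 \le C_G\,\norm{\psi_\varepsilon}_{H^1(\mathbb{R}^N)}^2,
\end{equation*}
since $p \le N/(N-1) \le N/(N-2)$. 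Combining the two bounds gives the desired uniform estimate in $W^{1,p}(G)$. There is no real obstacle here; the only point requiring attention is to match the upper endpoint $p=N/(N-1)$ with the critical Sobolev exponent $q = 2N/(N-2)$, and to handle the low-dimensional cases $N=1,2$ with the appropriate (standard) versions of the Sobolev embedding.
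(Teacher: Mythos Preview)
Your proof is correct and follows essentially the same route as the paper's: both use the chain rule $\grad|\psi_\varepsilon|^2 = \psi_\varepsilon\grad\overline{\psi_\varepsilon} + \overline{\psi_\varepsilon}\grad\psi_\varepsilon$, apply H\"older with exponents $(2,q)$ where $q=2p/(2-p)$, and observe that the endpoint $p=N/(N-1)$ corresponds exactly to the critical Sobolev exponent $q=2N/(N-2)$. The only cosmetic difference is that the paper invokes the Sobolev embedding on the bounded set $G$ directly, whereas you use the global embedding on $\mathbb{R}^N$ and then restrict; both are fine.
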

\begin{proof}
    Using Sobolev embeddings, 
    $$ H^1(G) \subset L^q(G) $$
    for any bounded open subset $G$ in $\R^{N}$ and for $q\in[1, 2N/(N-2)]$, and the embeddings are continuous. 
    Hence, $\{ |\psi_{\varepsilon}|^2 \}_{\varepsilon}$ is bounded in $L^p(G)$ for $p\in[1, N/(N-2)]$. 
    Notice that
    $$ \grad \LR{ \left| \psi_{\varepsilon}\right|^2}  = \psi_{\varepsilon} \grad \overline{\psi_{\varepsilon}} + \overline{\psi_{\varepsilon}} \grad \psi_{\varepsilon} . $$
    Then, to prove $\{ \grad (|\psi_{\varepsilon}|^2 )\}_{\varepsilon}$ is bounded in $ L^p(G)$, it suffices to prove $\{ \psi_{\varepsilon}\grad \psi_{\varepsilon}\}_{\varepsilon}$ is bounded in $ L^p(G)$. Using H\"older's inequality, yields
    \begin{equation}\label{afterHolder}
        \int_{G} \left| \psi_{\varepsilon}\grad \psi_{\varepsilon} \right|^p 
        \le \LR{\int_{G} \left| \grad \psi_{\varepsilon} \right|^{p\cdot \frac{2}{p}} }^{\frac{p}{2}} \LR{\int_{G} \left| \psi_{\varepsilon} \right|^{p\cdot s } }^{\frac{1}{s}} 
        = \left\Vert \grad \psi_{\varepsilon} \right\Vert^p_{L^2(G)}  \left\Vert \psi_{\varepsilon} \right\Vert^p_{L^{ps}(G)}
    \end{equation}
    for $p \in [1,\min\{2, N/(N-2)\}]$ and for $s$ such that
    $$ \frac{1}{s} + \frac{p}{2} = 1. $$
    One can check that $ ps \in [1, 2N/(N-2)] $ when $p\in [ 1, N/(N-1)]$, which shows $\{ \psi_{\varepsilon} \}_{\varepsilon} $ is bounded in $L^{ps}(G)$ because of the Sobelev embeddings mentioned above. 
    Therefore, it follows from~\eqref{afterHolder} that $\{ \psi_{\varepsilon}\grad \psi_{\varepsilon}\}_{\varepsilon}$ is bounded in $ L^p(G)$, which concludes the proof.
\end{proof}

In the following, we would like to concentrate on the properties of $\phi_{\varepsilon,k}$ in the $x$-direction, so we consider the function $\psi_{\varepsilon,k}$ on $\mathbb{R}^N$ defined by

\begin{equation}\label{psi1d}
    \psi_{\varepsilon,k} (x_1,\dots,x_N) \colonequals  \int_{G^y}  \absolutevalue{\phi_{\varepsilon,k}} (\mathbf{x}_1,\dots, \mathbf{x}_N)  U_1^2 (y_1,\dots, y_N) \d y_1\cdots \d y_N  .
\end{equation}
Then we have
\begin{lemma}[\textbf{Estimate on $\{ x_1=x_2 \}$}]\label{step2}\mbox{}\\
    Let $K$ be a bounded open subset in $\mathbb{R}^{N-1}$. For a constant $\gamma \ge 0$, we define 
    \begin{equation*}\label{defKgamma}
        K_{\gamma} \colonequals \left\{ (x_2,x_3,\dots,x_N) \in K : | x_2-x_j| > \sqrt{\gamma}/2, \forall j \ge 3\right\}.    
    \end{equation*}
    Then when $\gamma\ge \varepsilon$, we have
    \begin{equation*}
     \int_{K_{\gamma}} \absolutevalue{\psi_{\varepsilon,k}}^2(x_2,x_2,x_3,\dots,x_N)\d x_2 \cdots \d x_N 
    \le C_K \varepsilon^{\frac{s}{2}}\label{newEstm}
\end{equation*}
for some constants $C_K>0$  and $s \in (0,1)$ both independent of $\varepsilon$.
\end{lemma}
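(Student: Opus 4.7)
The plan is to bound the trace of $F := |\psi_{\varepsilon,k}|^2$ on the diagonal $\{x_1 = x_2\}$ by combining a one-dimensional Newton-Leibniz trace inequality in the $x_1$-direction with the near-diagonal estimate of Lemma~\ref{nearDiag} and the $W^{1,p}$ bound of Lemma~\ref{W1pbounded}. Concretely, for a.e.\ $(x_2,\ldots,x_N)$ one has
\begin{equation*}
F(x_2,x_2,x_3,\ldots,x_N) \le \frac{2}{\sqrt{\varepsilon}} \int_{|x_1-x_2|<\sqrt{\varepsilon}/2} F\, dx_1 + \int_{|x_1-x_2|<\sqrt{\varepsilon}/2} \left|\partial_{x_1} F\right| dx_1.
\end{equation*}
Integrating this over $(x_2,\ldots,x_N) \in K_\gamma$ leaves two terms to control, each an integral over the strip $\Sigma_\varepsilon := \{(x_1,\ldots,x_N) : (x_2,\ldots,x_N) \in K_\gamma,\, |x_1-x_2|<\sqrt{\varepsilon}/2\}$.

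For the first piece, Cauchy-Schwarz applied to the $y$-integration in~\eqref{psi1d} yields the pointwise inequality $F \le c_0 \int_{G^y} |\phi_{\varepsilon,k}|^2 U_1^2\, dy$ with $c_0 := \int_{G^y} U_1^2\, dy$. The hypothesis $\gamma \ge \varepsilon$, combined with the triangle inequality, shows that $\Sigma_\varepsilon$ is contained (up to harmless constants) in $G^x_\varepsilon$: from $|x_2-x_j|>\sqrt{\gamma}/2$ and $|x_1-x_2|<\sqrt{\varepsilon}/2$ one obtains $|x_1+x_2-2x_j| \ge 2|x_2-x_j|-|x_1-x_2| > \sqrt{\gamma} - \sqrt{\varepsilon}/2 \gtrsim \sqrt{\varepsilon}$. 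Lemma~\ref{nearDiag} then bounds $\int_{\Sigma_\varepsilon\times G^y}|\phi_{\varepsilon,k}|^2 U_1^2$ by $C'\varepsilon$, so this piece contributes $O(\sqrt{\varepsilon})$.

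For the second piece, I would first verify that $\psi_{\varepsilon,k}$ is uniformly bounded in $H^1(\R^N)$: differentiating~\eqref{psi1d} under the integral and applying Cauchy-Schwarz in $y$, the $L^2$-norms of $\psi_{\varepsilon,k}$ and of its derivatives are majorized by the $H^1_U(\R^{2N})$-norm of $|\phi_{\varepsilon,k}|$, which is under control by Proposition~\ref{propRmk}. Lemma~\ref{W1pbounded} then ensures $F$ is bounded in $W^{1,p}(G)$ on any bounded $G \subset \R^N$ for some $p \in (1, N/(N-1)]$. Hölder's inequality together with $|\Sigma_\varepsilon| \le \sqrt{\varepsilon}\,|K|$ gives
\begin{equation*}
\int_{\Sigma_\varepsilon}\left|\partial_{x_1}F\right| \le |\Sigma_\varepsilon|^{1/p'}\left\|\partial_{x_1}F\right\|_{L^p} \le C_K\,\varepsilon^{1/(2p')},
\end{equation*}
a strictly positive power of $\varepsilon$. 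Adding both contributions then yields the claim with $s := 1/p' \in (0,1)$.

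The main obstacle I anticipate is securing the inclusion $\Sigma_\varepsilon \subset G^x_\varepsilon$ exactly as defined in Lemma~\ref{nearDiag}: the estimate above only produces $|x_1+x_2-2x_j| > \sqrt{\varepsilon}/2$ when $\gamma = \varepsilon$, a factor $1/2$ short of the stated threshold $\sqrt{\varepsilon}$. This is harmless, since the proof of Lemma~\ref{nearDiag} adapts verbatim with the threshold $\sqrt{\varepsilon}$ replaced by any $c\sqrt{\varepsilon}$, preserving the $O(\varepsilon)$ conclusion (or, alternatively, one simply narrows the strip slightly). A secondary technical point is that applying the Newton-Leibniz inequality slicewise requires $F$ to be in $W^{1,1}$ along almost every $x_1$-line, which follows from the $W^{1,p}$ bound via Fubini.
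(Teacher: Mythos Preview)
Your proposal is correct and follows essentially the same strategy as the paper: a Newton--Leibniz trace estimate on $|\psi_{\varepsilon,k}|^2$ across a strip of width $\sqrt{\varepsilon}$, combined with Lemma~\ref{nearDiag} for the mass term and H\"older plus Lemma~\ref{W1pbounded} for the derivative term. The only difference is that the paper works in the relative coordinate $r=x_1-x_2$ (with $R=(x_1+x_2)/2$ playing the role of $x_2$ in $K_\gamma$) rather than in $x_1$ directly, which makes the inclusion of the strip into $G^x_\varepsilon$ exact---since then $|x_1+x_2-2x_j|=2|R-x_j|>\sqrt{\gamma}\ge\sqrt{\varepsilon}$---and so sidesteps the factor-$1/2$ wrinkle you correctly identify and dismiss.
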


\begin{proof}
Using the Cauchy-Schwarz inequality, we have
\begin{equation}
    \absolutevalue{\psi_{\varepsilon,k}  }^2 \le C_1 \int_{G^y} \absolutevalue{\phi_{\varepsilon,k}}^2 U_{1}^2   \d y_1\cdots \d y_N  ,\label{afterCS}
\end{equation}
where 
\begin{equation}\label{defC1}
    C_1 \colonequals \int_{G^y} U^2_1 \d y_1\cdots \d y_N
\end{equation}
is a constant independent of $\varepsilon$ and $G^y$ is defined in Lemma \ref{nearDiag}.
And this implies that $\{\psi_{\varepsilon,k}\}_{\varepsilon}$ is bounded in $L^2(\mathbb{R}^N)$:
\begin{equation*}
    \int_{\mathbb{R}^N} \absolutevalue{\psi_{\varepsilon,k}}^2 
    \le C_1 \int_{\mathbb{R}^N}  \int_{G^y} \absolutevalue{\phi_{\varepsilon,k}}^2 U^2_1  
    \le C_1. \label{newFcontrol0}
\end{equation*}
On the other hand, we can control the derivative of $\psi_{\varepsilon,k}$ as follows:
\begin{equation} \label{newFcontrolder}
\begin{split}
    \int_{\mathbb{R}^N} \absolutevalue{\partial_{x_j}\psi_{\varepsilon,k}}^2 & \le \int_{\mathbb{R}^N} \LR{\int_{G^y} \absolutevalue{\partial_{x_j}\absolutevalue{\phi_{\varepsilon,k}}} U_1^2 }^2
    \\
    & \le C_1  \int_{\mathbb{R}^N} {\int_{G^y} \absolutevalue{\partial_{x_j}\absolutevalue{\phi_{\varepsilon,k}}}^2 U_1^2 } \quad \text{(Cauchy-Schwarz inequality)}\\
    & \le C_1  \int_{\mathbb{R}^{2N}} { \absolutevalue{\partial_{x_j}\absolutevalue{\phi_{k}}}^2 U_{\varepsilon}^2 }. \quad \text{(rescaling $y\mapsto (\sqrt{\varepsilon})^{-1}y $)}
\end{split}
\end{equation}
With the help of ~\eqref{rmk}, the estimate in ~\eqref{newFcontrolder} becomes
\begin{equation}\label{propProof6}
    \int_{\mathbb{R}^N} \absolutevalue{\partial_{x_j}\psi_{\varepsilon,k}}^2 \le C_1( \lambda_k^{2\mathrm{D}} - Ne_{\varepsilon}).
\end{equation}
Combining the upper bounds in Theorem~\ref{thmUpperB} to the estimate in (\ref{propProof6}) above, we can then deduce that $\{ \psi_{\varepsilon,k} \}_{\varepsilon} $ is bounded in $H^{1}(\mathbb{R}^{N})$. 
With the help of Lemma \ref{W1pbounded}, we know that $\{|{\psi_{\varepsilon,k}}|^2\}_{\varepsilon} $ is bounded in $W^{1,p}(G) $ for any bounded subset $G$ in $\mathbb{R}^N $ and $p\in [1, N/(N-1)]$.

Consider the centre of mass and relative coordinates, 
$$
R=\frac{x_1+x_2}{2}\mbox{ and } r=x_1-x_2,
$$ 
and let 
$$ 
\psi_{\varepsilon,k}^{{CM}}(r, R,x_3,\dots,x_N) = \psi_{\varepsilon,k}(R+r/2, R-r/2,x_3,\dots,x_N) , 
$$ 
then $\{|{\psi_{\varepsilon,k}^{{CM}}}|^2 \}_{\varepsilon}$ is still bounded in $W^{1,p}(G ) $.  Thus, we can apply the Newton-Leibniz theorem
to $|{\psi^{CM}_{\varepsilon,k}}|^2$:
\begin{equation}
    \absolutevalue{\psi^{CM}_{\varepsilon,k}}^2(r,R,x_3,\dots,x_N) - \absolutevalue{\psi^{CM}_{\varepsilon,k}}^2(0,R,x_3,\dots,x_N)
    = \int_{0}^{r} \partial_{r} \absolutevalue{\psi^{CM}_{\varepsilon,k}}^2(x,R,x_3,\dots,x_N)\d x.\label{NLthm}
\end{equation}
Let $K_{\gamma}\subset \R^{N-1}$ be defined in Lemma \ref{step2}, then integrating over $(-\sqrt{\varepsilon}/2, \sqrt{\varepsilon}/2) \times K_{\gamma}$ on both sides of the identity (\ref{NLthm}), we have
\begin{equation*}
    \begin{split}
    & \absolutevalue{\int_{\left(-\sqrt{\varepsilon}/2, \sqrt{\varepsilon}/2\right) \times K_{\gamma}} \left({\absolutevalue{\psi^{CM}_{\varepsilon,k}}^2(r,R,x_3,\dots,x_N) - \absolutevalue{\psi^{CM}_{\varepsilon,k}}^2(0,R,x_3,\dots,x_N) } \right) \differential r\differential R\differential x_3 \ldots \differential x_N}\\
    & = \quad \absolutevalue{\int_{\left(-\sqrt{\varepsilon}/2, \sqrt{\varepsilon}/2\right) \times K_{\gamma}} \LR{ \int_{0}^{r} \partial_{r} \absolutevalue{\psi^{CM}_{\varepsilon,k}}^2(x,R,x_3,\dots,x_N)\d x } \differential r\differential R\differential x_3 \ldots \differential x_N}\\
    & \le \quad  \int_{\left(-\sqrt{\varepsilon}/2, \sqrt{\varepsilon}/2\right) \times K_{\gamma}} \LR{\int_{-\sqrt{\varepsilon}/2}^{\sqrt{\varepsilon}/2} \absolutevalue{\partial_{r} \absolutevalue{\psi^{CM}_{\varepsilon,k}}^2(x,R,x_3,\dots,x_N)}\d x } \differential r\differential R\differential x_3 \ldots \differential x_N\\
    & = \quad  \sqrt{\varepsilon}  \cdot \int_{K_{\gamma}} \LR{\int_{-\sqrt{\varepsilon}/2}^{\sqrt{\varepsilon}/2} \absolutevalue{\partial_{r} \absolutevalue{\psi^{CM}_{\varepsilon,k}}^2(x,R,x_3,\dots,x_N)}\d x } \differential R\differential x_3\ldots \differential x_N\\
    & = \quad  \sqrt{\varepsilon} \cdot \int_{\left(-\sqrt{\varepsilon}/2, \sqrt{\varepsilon}/2\right) \times K_{\gamma}} {\absolutevalue{\partial_{r} \absolutevalue{\psi^{CM}_{\varepsilon,k}}^2(x,R,x_3,\dots,x_N)}\d x } \differential R\differential x_3\ldots \differential x_N.
    \end{split}
\end{equation*}
Using H\"older's inequality next, it turns out that
\begin{equation}
\begin{split}
& \absolutevalue{\int_{\left(-\sqrt{\varepsilon}/2, \sqrt{\varepsilon}/2\right) \times K_{\gamma}} \left({\absolutevalue{\psi^{CM}_{\varepsilon,k}}^2(r,R,x_3,\dots,x_N) - \absolutevalue{\psi^{CM}_{\varepsilon,k}}^2(0,R,x_3,\dots,x_N) } \right)}\\ 
& \le \quad \sqrt{\varepsilon} \cdot  \LR{\sqrt{\varepsilon}\absolutevalue{K_{\gamma}}}^{s} \norm{\partial_r\absolutevalue{\psi^{CM}_{\varepsilon,k}}^2}_{L^{p}\left(\left(-\sqrt{\varepsilon}/2, \sqrt{\varepsilon}/2\right) \times K_{\gamma}\right)} \\
& \le \quad \sqrt{\varepsilon} \cdot  \LR{\sqrt{\varepsilon}\absolutevalue{K_{}}}^{s} \norm{\absolutevalue{\psi^{CM}_{\varepsilon,k}}^2}_{W^{1,p}\left(\left(-\sqrt{\varepsilon}/2, \sqrt{\varepsilon}/2\right) \times K_{\gamma}\right)}, 
\end{split}\label{afterholder2}
\end{equation}
where $s= 1-p^{-1} \in (0,1)$ and $\absolutevalue{K}$ is the Lebesgue measure of $K$ in $\mathbb{R}^{N-1}$. 
Since $\{|{\psi_{\varepsilon,k}^{{CM}}}|^2 \}_{\varepsilon}$ is bounded in $W^{1,p}(G ) $, the estimate in~\eqref{afterholder2} implies \begin{equation}\label{1pdifference}
    \absolutevalue{\int_{\left(-\sqrt{\varepsilon}/2, \sqrt{\varepsilon}/2\right) \times K_{\gamma}} \left({\absolutevalue{\psi^{CM}_{\varepsilon,k}}^2(r,R,x_3,\dots,x_N) - \absolutevalue{\psi^{CM}_{\varepsilon,k}}^2(0,R,x_3,\dots,x_N) } \right)} \le C_{{K}}'\cdot  {\varepsilon}^{\frac{1+s}{2}}
\end{equation}
for some constant $C_K'>0$ independent of $\varepsilon$.

On the other hand, if 
$$\left(x_1-x_2, (x_1+x_2)/2, x_3, \dots,x_N\right) \in \left(-\sqrt{\varepsilon}/2, \sqrt{\varepsilon}/2\right) \times K_{\gamma}$$
then we have 
$$ (x_1,\dots,x_N) \in G_{\varepsilon}^x  $$
for all $\gamma\ge \varepsilon$, where $G_{\varepsilon}^x$ is defined in Lemma \ref{nearDiag}. 
Then the estimate in Lemma \ref{nearDiag} and Inequality~(\ref{afterCS}) imply
\begin{equation*}
    \int_{\left(-\sqrt{\varepsilon}/2, \sqrt{\varepsilon}/2\right) \times K_{\gamma}} \absolutevalue{\psi^{CM}_{\varepsilon,k}}^2 \le \int_{G_{\varepsilon}^x} \left|\psi_{\varepsilon,k}   \right|^2  \le C'' \varepsilon, \label{newFcontrol}
\end{equation*}
for some constant $C''>0$ independent of $\varepsilon$, which together with the controls in (\ref{1pdifference}) shows that
\begin{equation*}
    \int_{K_{\gamma}} \absolutevalue{\psi_{\varepsilon,k}}^2(x_2,x_2,x_3,\dots,x_N)\d x_2 \cdots \d x_N 
    = \int_{K_{\gamma}} \absolutevalue{\psi^{CM}_{\varepsilon,k}}^2(0,R,x_3,\dots,x_N)\differential R\differential x_3 \cdots \differential x_N
    \le C_K \varepsilon^{\frac{s}{2}}
\end{equation*}
for some constant $C_K>0$ independent of $\varepsilon$, concluding the proof. 
\end{proof}

\begin{prop}[\textbf{Vanishing on diagonals}] \label{vanish}\mbox{}\\
The limit $ \psi_{0,k}$ defined in Proposition \ref{ConvPhi} vanishes on the diagonals $\mathbb{D}^{1\mathrm{D}}$. 
\end{prop}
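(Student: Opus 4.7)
The plan is to use the auxiliary function $\psi_{\varepsilon,k}$ from~\eqref{psi1d} as a bridge between the ``interior'' convergence provided by Proposition~\ref{ConvPhi} and the trace bound on the diagonal provided by Lemma~\ref{step2}. By symmetry of $\Psi_k^{2\mathrm{D}}$, $U_{\varepsilon}$ and $S$, the function $\phi_{\varepsilon,k}$ and hence its limit $\psi_{0,k}$ are symmetric in the $x$-variables. It is therefore enough to prove that $\psi_{0,k}$ vanishes almost everywhere on the single hyperplane $\{x_1=x_2\}$; the other sheets of $\mathbb{D}^{1\mathrm{D}}$ follow by permutation.

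First I would identify the weak limit of $\psi_{\varepsilon,k}$ in $H^1(\mathbb{R}^N)$. The proof of Lemma~\ref{step2} already shows, via Cauchy--Schwarz in the $y$-variables together with~\eqref{rmk} and the upper bound of Theorem~\ref{thmUpperB}, that $\{\psi_{\varepsilon,k}\}_\varepsilon$ is bounded in $H^1(\mathbb{R}^N)$. Extracting a weakly convergent subsequence, I would identify its limit using Proposition~\ref{propRmk}: on any bounded $\Omega \subset \mathbb{R}^N$, the strong convergence $|\phi_{\varepsilon,k}| \to |\psi_{0,k}|$ in $L^2_U$ on $\Omega\times G^y$, combined with the fact that $\psi_{0,k}$ does not depend on $y$, yields
\begin{equation*}
    \psi_{\varepsilon,k}(x_1,\dots,x_N) \;\longrightarrow\; C_1 \,|\psi_{0,k}|(x_1,\dots,x_N) \quad \text{in } L^2(\Omega),
\end{equation*}
with $C_1 = \int_{G^y} U_1^2$ as in~\eqref{defC1}. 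By uniqueness of limits, the weak $H^1$-limit is $C_1 |\psi_{0,k}|$.

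Next I would apply the trace theorem~\eqref{traceop} to the hyperplane $\{x_1=x_2\}$. Since trace is a bounded linear map $H^1(\mathbb{R}^N)\to L^2(\{x_1=x_2\})$, weak convergence in $H^1$ implies weak convergence of the traces in $L^2$. By lower semicontinuity of the $L^2$-norm under weak convergence, for any bounded $K\subset \mathbb{R}^{N-1}$ and any fixed $\gamma>0$,
\begin{equation*}
    C_1^2 \int_{K_{\gamma}} |\psi_{0,k}|^2(x_2,x_2,x_3,\dots,x_N)\,dx_2\cdots dx_N
    \le \liminf_{\varepsilon\to 0} \int_{K_\gamma} |\psi_{\varepsilon,k}|^2(x_2,x_2,x_3,\dots,x_N)\,dx_2\cdots dx_N.
\end{equation*}
Lemma~\ref{step2} shows the right-hand side is $O(\varepsilon^{s/2})$ and thus vanishes in the limit, so $\psi_{0,k}|_{x_1=x_2}=0$ almost everywhere on $K_\gamma$.

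Finally I would let $\gamma \to 0^+$ and exhaust $\mathbb{R}^{N-1}$ by bounded sets $K$ to conclude that $\psi_{0,k}$ vanishes almost everywhere on $\{x_1=x_2\}\setminus\{\exists\, j\ge 3,\, x_2=x_j\}$; since the removed subset has positive codimension in the hyperplane $\{x_1=x_2\}$, it is negligible for the $(N-1)$-dimensional Lebesgue measure. Symmetry then extends the vanishing to every pairwise diagonal, yielding $\psi_{0,k}=0$ on $\mathbb{D}^{1\mathrm{D}}$. The main obstacle is the identification of the $H^1$-weak limit as $C_1|\psi_{0,k}|$: one must reconcile the strong $L^2_U$ convergence of $|\phi_{\varepsilon,k}|$ (which is only known on bounded sets in $\mathbb{R}^{2N}$) with the fact that we only use weak $H^1$ convergence of $\psi_{\varepsilon,k}$ to transfer information to the trace. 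Once this identification is in place, the trace-continuity argument together with Lemma~\ref{step2} closes the proof cleanly.
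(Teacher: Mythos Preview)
Your proposal is correct and follows essentially the same route as the paper: identify the weak $H^1(\mathbb{R}^N)$ limit of $\psi_{\varepsilon,k}$ as $C_1|\psi_{0,k}|$ via local strong $L^2$ convergence, push this through the bounded trace operator to get weak $L^2$ convergence on $\{x_1=x_2\}$, then combine lower semicontinuity with the $O(\varepsilon^{s/2})$ bound of Lemma~\ref{step2} and let $\gamma\to 0$. The only cosmetic difference is that the paper invokes the strong local $L^2_U$ convergence of $\phi_{\varepsilon,k}$ from Proposition~\ref{ConvPhi} directly (rather than Proposition~\ref{propRmk}) to identify the limit, which is slightly more immediate but logically equivalent to your argument.
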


\begin{proof}
Notice that $|{\psi_{0,k}}|$ can be rewritten as
\begin{equation*}
\begin{split}
    \absolutevalue{\psi_{0,k}} (x_1,\dots,x_N) 
    = \frac{1}{C_1}  \int_{G^y} \absolutevalue{\psi_{0,k}} (x_1,\dots,x_N) U^2_1(y_1,\dots,y_N) \d y_1 \cdots \d y_N,
\end{split}
\end{equation*}
where $G^y$ is defined in Lemma \ref{nearDiag} and $C_1$ is defined in~\eqref{defC1}. 
Let $\psi_{\varepsilon,k}$ be defined in~\eqref{psi1d}.
Then for each bounded subset $G\subset \mathbb{R}^N$, using the Cauchy-Schwarz inequality, we have
\begin{equation*}
    \begin{split}
        \norm{\psi_{\varepsilon,k} - C_1\left|{\psi_{0,k}}\right|}^2_{L^2(G)} 
        & \le \int_{G} \LR{\int_{G^y} \Big\vert{ \absolutevalue{{\phi}_{\varepsilon,k}} - \absolutevalue{\psi_{0,k}} }\Big\vert U^2_1}^2\\
        & \le C_1 \norm{\absolutevalue{{\phi}_{\varepsilon,k}} - \absolutevalue{\psi_{0,k}}}_{L^2_U(\Omega^N)}^2\xrightarrow[\text{Proposition \ref{ConvPhi}}]{} 0, 
    \end{split}
\end{equation*}
where $\Omega$ is a bounded subset in $\mathbb{R}^{2}$ such that $\Omega^N$ contains $G\times G^y$.
Hence, $$\psi_{\varepsilon,k}\to C_1\left|{\psi_{0,k}}\right|$$
strongly in $L^2(G)$. 
Meanwhile, we know that $\{ \psi_{\varepsilon,k} \}_{\varepsilon} $ is bounded in $H^{1}(\mathbb{R}^{N})$ from the proof of Lemma~\ref{step2}.
Then according to the Banach-Alaoglu Theorem, $\{\psi_{\varepsilon,k}\}_{\varepsilon} $ also converges weakly to $C_1|\psi_{0,k}|$ in $H^{1}(\mathbb{R}^N ) $ after passing to a subsequence.
Since the trace operator $T$ defined in (\ref{traceop}) is continuous, $${\psi^{}_{\varepsilon,k}(x_2,x_2,\dots,x_N)} \to C_1 |{\psi^{}_{0,k}}|(x_2,x_2,\dots,x_N)$$
weakly in $L^2(\mathbb{R}^{N-1})$. 
Therefore, with the estimate in Lemma \ref{step2}, for each $\gamma>0$ independent of $\varepsilon$, we have 
\begin{equation*}
\begin{split}
    0 &\le \int_{K_{\gamma}} \absolutevalue{\psi_{0,k}}^2(x_2,x_2,x_3,\dots,x_N)\d x_2\cdots \d x_N   \\
    & \le \frac{1}{C_1^2} \liminf_{\varepsilon \to 0} \int_{K_{\gamma}} \absolutevalue{\psi_{\varepsilon,k}}^2(x_2,x_2,x_3,\dots,x_N)\d x_2 \cdots \d x_N \le 0,
\end{split}
\end{equation*}
which implies that ${\psi_{0,k}}(x_2,x_2,x_3,\dots,x_N) $ equals to $0$ in $L^2({K_{\gamma}}) $ for any bounded open subset $K$ in $\R^{N-1}$ and for any constant $\gamma>0$ independent of $\varepsilon$. Letting finally $\gamma \to 0$  after $\varepsilon \to 0$ it follows that ${\psi_{0,k}}(x_2,x_2,x_3,\dots,x_N) $ equals to $0$ in $L^2(\R^{N-1}) $, i.e.  $\psi_{0,k}$ vanishes on the diagonal $\{x_1=x_2\} $, which concludes the proof of Proposition \ref{vanish}.
\end{proof}

With the help of Proposition \ref{vanish}, the limit $\psi_{0,k} $ is indeed in $\mathcal{D}_q(H^{1\mathrm{D}})$. 
Moreover, Inequality (\ref{proofLower7}) implies
\begin{equation}\label{refined}
    \liminf_{\varepsilon\to 0} \LR{\lambda^{2\mathrm{D}}_{k} - N  e_{\varepsilon}} \ge \mathcal{E}^{1\mathrm{D}} (\psi_{0,k} ).
\end{equation}
Combining with the upper bounds in Theorem \ref{thmUpperB}, we deduce
\begin{equation}\label{afterinDomain}
    \mathcal{E}^{1\mathrm{D}} (\psi_{0,k} )\le \lambda^{1\mathrm{D}}_k.
\end{equation}
We next need the following strong $L^2$-compactness:

\begin{lemma}[\textbf{Strong $L^2$-compactness}]\label{normal}\mbox{}\\
With the same notation as in Proposition \ref{ConvPhi}, we have
$$ \phi_{\varepsilon,k} \to \psi_{0,k} $$
strongly in $L^2_U(\R^{2N})$. Hence, the limit $\psi_{0,k}$ is normalized in $L^2(\mathbb{R}^N)$.
\end{lemma}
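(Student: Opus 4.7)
The plan is to upgrade the local strong convergence from Proposition~\ref{ConvPhi} to global strong convergence by showing that the $L^2_U$-norm passes to the limit, which reduces to a tightness argument on $\R^{2N}$.

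First I would verify that $\|\phi_{\varepsilon,k}\|_{L^2_U(\R^{2N})} = 1$ uniformly in $\varepsilon$. Since $\Psi^{2\mathrm{D}}_k$ is $L^2$-normalized and $|\phi_k|^2 U_\varepsilon^2 = |\Phi_k|^2 U_\varepsilon^2 = |\Psi^{2\mathrm{D}}_k|^2$, the rescaling $y_j \mapsto \sqrt{\varepsilon}\,y_j$ together with the elementary identity $U_1(y/\sqrt{\varepsilon})^2 = \varepsilon^{N/2} U_\varepsilon(y)^2$ gives
\[
\int_{\R^{2N}} |\phi_{\varepsilon,k}|^2 U_1^2 = \int_{\R^{2N}} |\phi_k|^2 U_\varepsilon^2 = \int_{\R^{2N}} |\Psi^{2\mathrm{D}}_k|^2 = 1.
\]

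Next I would establish uniform tightness of the probability measures $|\phi_{\varepsilon,k}|^2 U_1^2 \,\mathrm{d}\mathbf{x}$ on $\R^{2N}$. Tightness in the $y$-directions is immediate from the rapid decay of the Gaussian weight $U_1^2$. Tightness in the $x$-directions follows from~\eqref{proofLower5} (together with the upper bound of Theorem~\ref{thmUpperB}), which yields
\[
\sum_{j=1}^N \int_{\R^{2N}} |x_j|^2 |\phi_{\varepsilon,k}|^2 U_1^2 \le \lambda_k^{1\mathrm{D}}
\]
uniformly in $\varepsilon$, so Chebyshev's inequality controls the mass on $\{|x_j|>R\}$ uniformly. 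Combining both estimates, for any $\delta > 0$ there is a bounded set $V \subset \R^{2N}$ with $\sup_\varepsilon \int_{\R^{2N} \setminus V} |\phi_{\varepsilon,k}|^2 U_1^2 \le \delta$.

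On the bounded set $V$, Proposition~\ref{ConvPhi} already gives strong $L^2_U$-convergence $\phi_{\varepsilon,k} \to \psi_{0,k}$. Passing the tail bound to $\psi_{0,k}$ by Fatou and splitting $\int_{\R^{2N}} = \int_V + \int_{\R^{2N} \setminus V}$ then yields $\|\phi_{\varepsilon,k} - \psi_{0,k}\|_{L^2_U(\R^{2N})} \to 0$. In particular $\|\psi_{0,k}\|_{L^2_U(\R^{2N})} = 1$, and since $\psi_{0,k}$ depends only on the $x$-coordinates while $\int_{\R^N} U_1^2\,\mathrm{d}\mathbf{y} = 1$, we conclude $\|\psi_{0,k}\|_{L^2(\R^N)} = 1$. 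The only delicate point is the uniform $x$-tightness, since unlike the $y$-variables there is no built-in confining weight; this is precisely where the harmonic confinement bound furnished by Proposition~\ref{decoupE} and~\eqref{proofLower5} is indispensable.
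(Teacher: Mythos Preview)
Your strategy is correct and is more direct than the paper's. The paper takes a longer route: it decomposes $|\Psi^{2\mathrm{D}}_k| = \psi^{abs}_{\varepsilon,k}\,U_\varepsilon + \omega^{abs}_{\varepsilon,k}$, uses the spectral gap of the harmonic oscillator to show $\|\omega^{abs}_{\varepsilon,k}\|_{L^2}\to 0$, then proves $\psi^{abs}_{\varepsilon,k}\to|\psi_{0,k}|$ strongly in $L^2(\R^N)$ via $H^1$-boundedness together with $x$-confinement, and finally combines norm convergence with weak convergence. Your argument bypasses the spectral decomposition entirely by working with tightness of $|\phi_{\varepsilon,k}|^2 U_1^2$ on $\R^{2N}$ directly; this is more elementary and just as effective for the present lemma, though it forgoes the structural information about the remainder that the paper's decomposition provides.

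There is, however, one step you should tighten. The claim that $y$-tightness is ``immediate from the rapid decay of the Gaussian weight $U_1^2$'' is not correct as written: Gaussian decay of the \emph{weight} alone does not control $\int_{\{|y_j|>R\}} |\phi_{\varepsilon,k}|^2 U_1^2$, since $|\phi_{\varepsilon,k}|$ could in principle be large where $U_1$ is small. You need one more input, and it is cheap: undoing the rescaling gives
\[
\int_{\R^{2N}} |y_j|^2\,|\phi_{\varepsilon,k}|^2 U_1^2
= \frac{1}{\varepsilon}\int_{\R^{2N}} |y_j|^2\,|\Psi^{2\mathrm{D}}_k|^2
\le \frac{1}{\varepsilon}\cdot \varepsilon^2 \lambda_k^{2\mathrm{D}}
\le N + \varepsilon\,\lambda_k^{1\mathrm{D}},
\]
using only that $\varepsilon^{-2}|y_j|^2\le V_\varepsilon(\mathbf{x}_j)$ and Theorem~\ref{thmUpperB}. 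Chebyshev then yields uniform $y$-tightness, and your argument goes through as stated.
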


\begin{proof}
Applying the diamagnetic inequality to the 2D eigenfunction $\Psi^{2\mathrm{D}}_{k} $, we obtain 
\begin{equation}
    \sum\limits_{j=1}^N \int_{\mathbb{R}^{2N}} \absolutevalue{\grad_{\mathbf{x}_j}\absolutevalue{\Psi^{\mathrm{2D}}_{k} }}^2 
    + \sum\limits_{j=1}^N \int_{\mathbb{R}^{2N}} V_{\varepsilon}(\mathbf{x}_j) \absolutevalue{\Psi^{\mathrm{2D}}_{k}}^2 \le \lambda^{\mathrm{2D}}_{k}. \label{directDiam}
\end{equation}
We next decompose $\absolutevalue{{\Psi}^{\mathrm{2D}}_{k}}$ into its orthogonal projection on $U_{\varepsilon}$ and a remainder:
\begin{equation}\label{decompose2D}
    \absolutevalue{{\Psi}^{\mathrm{2D}}_{ k}} (\mathbf{x}_1,\dots,\mathbf{x}_N) = \psi^{abs}_{\varepsilon , k}({x}_1,\dots,{x}_N) U_{\varepsilon}({y}_1,\dots,{y}_N) + \omega^{abs}_{\varepsilon,k} (\mathbf{x}_1,\dots,\mathbf{x}_N),
\end{equation}
where \begin{equation*}
    \psi^{abs}_{\varepsilon,k}({x}_1,\dots,{x}_N) = \int_{\mathbb{R}^N}\absolutevalue{\Psi_{k}^{\mathrm{2D}}} (\mathbf{x}_1,\dots,\mathbf{x}_N)U_{\varepsilon}({y}_1,\dots,{y}_N)\differential y_1\cdots\differential y_N. 
\end{equation*}
By definition, we can deduce $U_{\varepsilon}$ and $\omega_{\varepsilon,k}^{abs}$ are orthogonal on $y$-direction:
\begin{equation*}
    \int_{\R^N} \omega^{abs}_{\varepsilon,k} (\mathbf{x}_1,\dots,\mathbf{x}_N) U_{\varepsilon}({y}_1,\dots,{y}_N)\differential y_1\cdots\differential y_N = 0.
\end{equation*}
Hence, we have
\begin{equation*}
    1 = \norm{{\Psi}^{\mathrm{2D}}_{ k}}_{L^2(\R^{2N})} = \norm{\psi^{abs}_{\varepsilon,k}}_{L^2(\R^{N})} + \norm{\omega^{abs}_{\varepsilon,k} }_{L^2(\R^{2N})}
\end{equation*}
and then (\ref{directDiam}) above yields 
\begin{equation}\label{estiomega}
\begin{split}
    \lambda_k^{2\mathrm{D}} & \ge \sum\limits_{j=1}^N \int_{\mathbb{R}^{2N}} \absolutevalue{\partial_{y_j}\absolutevalue{\Psi^{\mathrm{2D}}_{k} }}^2 
    + \sum\limits_{j=1}^N \int_{\mathbb{R}^{2N}} \frac{1}{\varepsilon^2}\left|y_j\right|^2\absolutevalue{\Psi^{\mathrm{2D}}_{k}}^2 \\
    & =  N e_{\varepsilon} \cdot
    \norm{\psi^{abs}_{\varepsilon , k} }_{{L^2(\R^N)}}^2 + \sum\limits_{j=1}^N \int_{\mathbb{R}^{2N}} \absolutevalue{\partial_{y_j}\absolutevalue{\omega^{abs}_{\varepsilon,k} }}^2 
    + \sum\limits_{j=1}^N \int_{\mathbb{R}^{2N}} \frac{1}{\varepsilon^2}\left|y_j\right|^2\absolutevalue{\omega^{abs}_{\varepsilon,k} }^2\\
    & \ge  N e_{\varepsilon} \cdot
    \norm{\psi^{abs}_{\varepsilon , k} }_{{L^2(\R^N)}}^2  + (N+2)e_{\varepsilon} \cdot \norm{\omega^{abs}_{\varepsilon,k} }_{{L^2(\R^N)}}^2 \\
    & = Ne_{\varepsilon} + 2 e_{\varepsilon} \norm{\omega^{abs}_{\varepsilon,k} }_{{L^2(\R^N)}}^2,
\end{split}
\end{equation}
where in the last inequality we also use the fact that the second eigenvalue of $-\partial_y^2 + \varepsilon^{-2}y^2$ is equal to $3e_{\varepsilon}$.
Combining the above estimate~\eqref{estiomega} with the upper bounds in Theorem \ref{thmUpperB}, we can deduce that $\{\omega^{abs}_{\varepsilon,k}\}_{\varepsilon}$ converges strongly to 0 in $L^2(\mathbb{R}^{2N})$   as $\varepsilon$ goes to $0$.
After rescaling the $y$-coordinates, $y\mapsto \sqrt{\varepsilon}y$, the identity~\eqref{decompose2D} becomes
\begin{equation*}
    \absolutevalue{\phi_{\varepsilon,k}}U_{1}  = \psi^{abs}_{\varepsilon,k} U_{1} + \widetilde{\omega}^{abs}_{\varepsilon,k} ,
\end{equation*}
where $\phi_{\varepsilon,k} $ is the same one as in~\eqref{eq:rescale func} and 
$$
\widetilde{\omega}_{\varepsilon,k}^{abs} \left(x_1,y_1,\dots,x_N,y_N\right) = \LR{\sqrt{\varepsilon}}^{\frac{N}{2}} {\omega}_{\varepsilon,k}^{abs} \left(x_1, \sqrt{\varepsilon}y_1,\dots, x_N,\sqrt{\varepsilon}y_N\right). 
$$
We then note that 
$$ \norm{\widetilde{\omega}_{\varepsilon,k}^{abs}}_{L^2(\mathbb{R}^{2N} )} = \norm{\omega^{abs}_{\varepsilon,k}}_{L^2(\mathbb{R}^{2N} )}\to 0.  $$
With the same notation as in Proposition \ref{ConvPhi}, it follows that
\begin{equation*}
    \begin{split}
        \norm{ { \absolutevalue{\psi_{0,k}} - \psi_{\varepsilon,k}^{abs}}  }_{L^2_U(\Omega^N )}
        & \le \norm{ { \absolutevalue{\psi_{0,k}} - \absolutevalue{{\phi}_{\varepsilon,k}}}}_{L^2_{U}(\Omega^N )} + \norm{ {\absolutevalue{{\phi}_{\varepsilon,k}} -  \psi_{\varepsilon,k}^{abs}}  }_{L^2_{U}(\Omega^N )}  \\
        & \le \norm{ { \absolutevalue{\psi_{0,k}} - \absolutevalue{{\phi}_{\varepsilon,k}}}}_{L^2_{U}(\Omega^N )} + \norm{ \widetilde{\omega}_{\varepsilon,k}^{abs}}_{L^2(\mathbb{R}^{2N} )} \to 0,
    \end{split}
\end{equation*}
that is to say, $$ \psi_{\varepsilon,k}^{abs} \to \absolutevalue{\psi_{0,k}}$$strongly in $L^2({G} )$ for any bounded set ${G}\subset\mathbb{R}^N$.
Notice from the definition of $\psi^{abs}_{\varepsilon,k}$ and the Cauchy-Schwarz inequality that
\begin{equation*}
    \sum_{j=1}^N \norm{\partial_{x_j}\psi^{abs}_{\varepsilon,k}}^2_{L^2(\mathbb{R}^N )} \le \sum_{j=1}^N \norm{\partial_{x_j}\absolutevalue{{\phi}_{\varepsilon,k}}}^2_{L^2_U(\mathbb{R}^{2N} )}.
\end{equation*}
Since $\{|{\phi}_{\varepsilon,k}|\}_{\varepsilon}$ is bounded in $H^1_U(\mathbb{R}^{2N} )$ by Proposition \ref{propRmk}, we obtain that $\{ \psi^{abs}_{\varepsilon,k} \}_{\varepsilon}$ is bounded in $H^1(\mathbb{R}^N )$. 
Again using the Banach-Alaoglu Theorem,  $\{\psi^{abs}_{\varepsilon,k}\}_{\varepsilon} $ also converges weakly to $|\psi_{0,k}|$ in $H^1(\mathbb{R}^N )$ after passing to a subsequence, i.e. $\psi^{abs}_{\varepsilon,k} -|\psi_{0,k}| \rightharpoonup 0$ in $H^1(\mathbb{R}^N )$. 
Referring to \cite[Lemma 14]{Lewin-compact}, for a positive sequence $\{R_\varepsilon\}_{\varepsilon}$ going to infinity, again after passing to a subsequence, $\{\psi^{abs}_{\varepsilon,k} -|\psi_{0,k}|\}_{\varepsilon}$ satisfies the following local estimate: 
\begin{equation}
    \int_{{\absolutevalue{x_j}\le R_{\varepsilon}, \,\forall j}} \absolutevalue{\psi^{abs}_{\varepsilon,k} -\absolutevalue{\psi_{0,k}}}^2 (x_1,\dots,x_N) \d x_1 \dots,\d x_N \to 0. \label{cptPart}
\end{equation}
On the other hand, the orthogonality of $U_{\varepsilon}$ and $\omega ^{abs}_{\varepsilon}$ in the $y$-direction leads to 
\begin{equation}\label{othoxj}
    \int_{\mathbb{R}^N} |x_j| ^2 \absolutevalue{\psi^{abs}_{\varepsilon,k}}^2(x_1,\dots,x_N) \d x_1 \cdots\d x_N 
    \le  \int_{\mathbb{R}^{2N}} |x_j| ^2 \absolutevalue{\Psi^{2\mathrm{D}}_{k}}^2(\mathbf{x}_1,\dots,\mathbf{x}_N) \d \mathbf{x}_1 \cdots\d \mathbf{x}_N.
\end{equation}
Thanks to the upper bounds in Theorem \ref{thmUpperB} and the energy decoupling in Proposition \ref{decoupE}, we have
\begin{equation}\label{controlonPsi}
    \int_{\mathbb{R}^{2N}} |x_j| ^2 \absolutevalue{\Psi^{2\mathrm{D}}_{k}}^2(\mathbf{x}_1,\dots,\mathbf{x}_N) \d \mathbf{x}_1 \cdots\d \mathbf{x}_N \le C
\end{equation}
for some constant $C$ independent of $\varepsilon$. 
Then inequalities~\eqref{othoxj} and~\eqref{controlonPsi} together yield
\begin{equation*}
    \int_{\mathbb{R}^N} |x_j| ^2 \absolutevalue{\psi^{abs}_{\varepsilon,k}}^2(x_1,\dots,x_N) \d x_1 \cdots\d x_N \le C,
\end{equation*}
which implies
\begin{equation*}
    \sum_{j=1}^N \int_{\mathbb{R}^N} |x_j| ^2 \absolutevalue{\psi^{abs}_{\varepsilon,k} - \absolutevalue{\psi_{0,k}}}^2(x_1,\dots,x_N) \d x_1 \cdots \d x_N 
    \le \widetilde{C}
\end{equation*}
for some constant $\widetilde{C}$ independent of $\varepsilon$. It follows that 
\begin{equation}
    \int_{{ \exists j,\, \absolutevalue{x_j} > R_\varepsilon}} \absolutevalue{\psi^{abs}_{\varepsilon,k} - \absolutevalue{\psi_{0,k}}}^2(x_1,\dots,x_N) \d x_1 \cdots \d x_N 
    \le \frac{\widetilde{C}}{R_\varepsilon^2} \to 0. \label{nonlocalPart}
\end{equation}
Combining the local estimate (\ref{cptPart}) and the non-local estimate (\ref{nonlocalPart}), we claim that $\{\psi^{abs}_{\varepsilon,k}\}_{\varepsilon} $ converges strongly to $\abs{\psi_{0,k}}$ in the whole space, i.e. in $L^2(\mathbb{R}^N ) $.
Therefore, we have
\begin{equation*}
    \norm{\psi_{0,k}}_{L^2(\mathbb{R}^N )}^2  = \lim_{\varepsilon\to 0} \norm{\psi^{abs}_{\varepsilon,k}}_{L^2(\mathbb{R}^N )}^2  = \lim_{\varepsilon\to 0} \LR{\norm{\Psi^{2\mathrm{D}}_{k}}_{L^2(\mathbb{R}^{2N} )}^2 - \norm{\omega^{abs}_{\varepsilon,k}}_{L^2(\mathbb{R}^{2N} )}^2} = 1,
\end{equation*}
which together with the weak convergence 
$$\phi_{\varepsilon,k} \overset{L^2_U(\Lambda_0)}{\rightharpoonup}\psi_{0,k} $$ 
that we proved in Proposition \ref{ConvPhi} implies the strong convergence claimed in Lemma \ref{normal}.
\end{proof}

\begin{lemma}[\textbf{Limiting eigenfunctions}]\label{be1DEF}\mbox{}\\
The limit $\psi_{0,k}$ defined in Proposition \ref{ConvPhi} is an eigenfunction of $H^{1\mathrm{D}}$ corresponding to the eigenvalue $\lambda^{1\mathrm{D}}_k$.
\end{lemma}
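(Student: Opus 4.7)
The plan is to combine orthonormality of $\{\Psi^{\mathrm{2D}}_j\}_j$ with the energy bound~\eqref{afterinDomain} and an inductive application of the Courant--Fischer min-max formulae of Theorem~\ref{minmax}.

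First I would perform a diagonal extraction yielding a single subsequence along which the conclusions of Proposition~\ref{ConvPhi} and Lemma~\ref{normal} hold simultaneously for every index $j\le k$; in particular $\phi_{\varepsilon,j}\to\psi_{0,j}$ strongly in $L^2_U(\mathbb{R}^{2N})$ for each such $j$. I would then transfer orthonormality to the limits: since $\Psi^{\mathrm{2D}}_j = \phi_j\, e^{-\mathrm{i}\alpha S}\,U_\varepsilon$, the identity $\langle \Psi^{\mathrm{2D}}_j,\Psi^{\mathrm{2D}}_l\rangle_{L^2(\mathbb{R}^{2N})}=\delta_{jl}$ unfolds, after the rescaling $y\mapsto\sqrt{\varepsilon}\,y$, into $\langle \phi_{\varepsilon,j},\phi_{\varepsilon,l}\rangle_{L^2_U}=\delta_{jl}$. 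Passing to the strong $L^2_U$-limit and using $\int U_1^2=1$ together with the $y$-independence of the limits would give $\langle \psi_{0,j},\psi_{0,l}\rangle_{L^2(\mathbb{R}^N)}=\delta_{jl}$.

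Next I would induct on $j\le k$. For $j=1$ the min-max formula gives $\lambda^{1\mathrm{D}}_1=\inf\{\mathcal{E}^{1\mathrm{D}}(\psi)\colon\psi\in\mathcal{D}_q(H^{1\mathrm{D}}),\ \|\psi\|=1\}$, and~\eqref{afterinDomain} makes this infimum attained at $\psi_{0,1}$, which is therefore an eigenfunction for $\lambda^{1\mathrm{D}}_1$. Assuming the conclusion for indices strictly less than $j$, the induction hypothesis together with the Hermitian symmetry of the quadratic form and $\psi_{0,i}\in\mathcal{D}(H^{1\mathrm{D}})$ yields $\mathcal{E}^{1\mathrm{D}}(\psi_{0,i},\psi_{0,j})=\lambda^{1\mathrm{D}}_i\langle\psi_{0,i},\psi_{0,j}\rangle=0$ for $i<j$. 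Therefore on $V_j\colonequals\operatorname{span}\{\psi_{0,1},\ldots,\psi_{0,j}\}$ the energy matrix is diagonal with entries $\lambda^{1\mathrm{D}}_1,\ldots,\lambda^{1\mathrm{D}}_{j-1},\mathcal{E}^{1\mathrm{D}}(\psi_{0,j})$, and the min-max bound $\lambda^{1\mathrm{D}}_j\le\sup_{\psi\in V_j,\,\|\psi\|=1}\mathcal{E}^{1\mathrm{D}}(\psi)$ combined with~\eqref{afterinDomain} forces $\mathcal{E}^{1\mathrm{D}}(\psi_{0,j})=\lambda^{1\mathrm{D}}_j$.

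The step I expect to need most care is upgrading this energy equality to actual eigenfunction status. The point is that $\psi_{0,1},\ldots,\psi_{0,j-1}$ form an orthonormal family of $j-1$ eigenfunctions for the eigenvalues $\lambda^{1\mathrm{D}}_1,\ldots,\lambda^{1\mathrm{D}}_{j-1}$, and since the direct sum of the corresponding eigenspaces has total dimension $j-1$ (counted with multiplicity), this family spans it entirely. Thus $\psi_{0,j}$, being orthogonal to all of them, lies in the orthogonal complement of this sum, which is contained in the closed span of eigenspaces for eigenvalues $\geq \lambda^{1\mathrm{D}}_j$. Expanding $\psi_{0,j}$ in an eigenbasis of $H^{1\mathrm{D}}$ supported there, the identity $\mathcal{E}^{1\mathrm{D}}(\psi_{0,j})=\lambda^{1\mathrm{D}}_j$ forces every spectral component outside the $\lambda^{1\mathrm{D}}_j$-eigenspace to vanish, concluding that $\psi_{0,j}$ is indeed an eigenfunction of $H^{1\mathrm{D}}$ for $\lambda^{1\mathrm{D}}_j$.
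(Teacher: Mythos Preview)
Your argument is essentially the paper's: a diagonal extraction, transfer of orthonormality from $\{\Psi^{2\mathrm{D}}_j\}$ to $\{\psi_{0,j}\}$ via the strong $L^2_U$-convergence of Lemma~\ref{normal}, and an induction combining~\eqref{afterinDomain} with the variational/spectral characterization of the eigenvalues. One caveat worth flagging: both your min-max step and the dimension count ``the direct sum of the corresponding eigenspaces has total dimension $j-1$'' break down when $\lambda^{1\mathrm{D}}_j=\lambda^{1\mathrm{D}}_{j-1}$, since then the eigenspace for $\lambda^{1\mathrm{D}}_{j-1}$ has dimension at least two and is not spanned by $\{\psi_{0,i}:i<j\}$, while $\sup_{V_j}\mathcal{E}^{1\mathrm{D}}\ge\lambda^{1\mathrm{D}}_{j-1}=\lambda^{1\mathrm{D}}_j$ trivially and yields no information on $\mathcal{E}^{1\mathrm{D}}(\psi_{0,j})$. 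The repair is immediate and already implicit in your last paragraph: what $\{\psi_{0,i}:i<j\}$ does span is $\bigoplus_{\mu<\lambda^{1\mathrm{D}}_j}E_\mu$ (for each such $\mu$ there are exactly $\dim E_\mu$ indices $i<j$ with $\lambda^{1\mathrm{D}}_i=\mu$), so $\psi_{0,j}\in\overline{\bigoplus_{\mu\ge\lambda^{1\mathrm{D}}_j}E_\mu}$, which gives $\mathcal{E}^{1\mathrm{D}}(\psi_{0,j})\ge\lambda^{1\mathrm{D}}_j$ directly; then~\eqref{afterinDomain} forces equality and your spectral expansion concludes. This is exactly how the paper proceeds (tersely), so the min-max detour is in fact superfluous.
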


\begin{proof}
Let $\{\phi_{\varepsilon,k}\}_{\varepsilon}$ converge to $\psi_{0,k}$ in the sense of the one in Proposition \ref{ConvPhi}. 
Then after passing to a subsequence, $\{\phi_{\varepsilon,j}\}_{\varepsilon}$ converges to some $\psi_{0,j}$ in the same sense for all $j\le k$.
Proposition \ref{normal} tells us that $\{\phi_{\varepsilon,j}\}_{\varepsilon}$ converges strongly to $\psi_{0,j} $ in $ L^2_U(\mathbb{R}^{2N})$. 
Therefore, for $j\neq l \le k$, we have
\begin{equation}\label{proofprop3}
    \begin{split}
        \langle \psi^{}_{0,j} | \psi^{}_{0,l} \rangle_{L^2(\mathbb{R}^{N})}  & =  \langle \psi^{}_{0,j} U_{1} | \psi^{}_{0,l} U_{1} \rangle_{L^2(\mathbb{R}^{2N})}  \\  
        & =  \lim_{\varepsilon\to 0} \langle \phi^{}_{\varepsilon,j} U_{1} | \phi^{}_{\varepsilon,l} U_{1} \rangle_{L^2(\mathbb{R}^{2N})}  \\
        & =  \lim_{\varepsilon\to 0 } \langle \Psi^{2\mathrm{D}}_{j} | \Psi^{2\mathrm{D}}_{l} \rangle_{L^2(\mathbb{R}^{2N})}  = 0,
    \end{split}
\end{equation}
i.e. $\{ \psi_{0,j} \}_{j=1}^k $ is an orthonormal set in $L^2(\mathbb{R}^N)$. 

We now prove the lemma by induction. Firstly, for $m=1$, it is clear that $$\mathcal{E}^{1\mathrm{D}} (\psi_{0,1}) \ge \lambda^{1\mathrm{D}}_1,$$ and with the upper bounds in (\ref{afterinDomain}) we obtain that $\psi_{0,1}$ is the eigenfunction corresponding to $\lambda^{1\mathrm{D}}_1$. 
Suppose that for $m=k-1$, the set of limit $\{\psi_{0,j}\}_{j=1}^{k-1}$  is a set of eigenfunctions corresponding to the eigenvalues $\{ \lambda^{1\mathrm{D}}_j \}_{j=1}^{k-1}$ respectively. 
According to the identity (\ref{proofprop3}), we have $$\psi_{0,k} \perp_{L^2(\mathbb{R}^N)} \{\psi_{0,j}\}_{j=1}^{k-1} ,$$ which implies $$\mathcal{E}^{1\mathrm{D}} (\psi_{0,k})  \ge \lambda^{1\mathrm{D}}_{k}.$$
With the help of the upper bounds in (\ref{afterinDomain}), this gives 
$$
\mathcal{E}^{1\mathrm{D}} (\psi_{0,k})  = \lambda^{1\mathrm{D}}_{k}.
$$
Then we can deduce that $\psi_{0,k}$ is an eigenfunction corresponding to $\lambda^{1\mathrm{D}}_{k}$. 
\end{proof}

Thanks to Lemma \ref{be1DEF}, Inequality (\ref{refined}) gives 
$$
\liminf_{\varepsilon\to 0} \LR{\lambda^{2\mathrm{D}}_{k} - N  e_{\varepsilon}} \ge \lambda_{k}^{1\mathrm{D}}
$$ 
which concludes the proof of Theorem~\ref{thmLowerB}.

\section{Relation between eigenfunctions}

In this section, we conclude the proof for the relation between eigenfunctions stated in Theorem \ref{thmEF}.

Similarly to Lemma \ref{normal}, we also have the corresponding strong $H^1$-compactness:
\begin{lemma}[\textbf{Strong $H^1$-compactness}]\label{cptH1}\mbox{}\\
    With the same notation as in Proposition \ref{ConvPhi}, we have
$$ \phi_{\varepsilon,k} \to \psi_{0,k} $$
strongly in $H^1_U(\Lambda_0)$. 
\end{lemma}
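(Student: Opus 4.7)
The plan is to upgrade the weak convergence $\phi_{\varepsilon,k}\rightharpoonup \psi_{0,k}$ in $H^1_U(\Lambda_0)$ from Proposition~\ref{ConvPhi} to strong convergence. Since $H^1_U(\Lambda_0)$ is a Hilbert space, it is enough to establish convergence of the norm, and since
$$
\|\cdot\|_{H^1_U(\Lambda_0)}^2 = \|\cdot\|_{L^2_U(\Lambda_0)}^2 + \sum_{j=1}^N\Bigl(\|\partial_{x_j}\cdot\|_{L^2_U(\Lambda_0)}^2+\|\partial_{y_j}\cdot\|_{L^2_U(\Lambda_0)}^2\Bigr),
$$
this reduces to proving norm convergence of each summand. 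The $L^2_U$-summand is already handled by Lemma~\ref{normal} (note that $\Lambda_0$ and $\mathbb{R}^{2N}$ differ only by a set of measure zero, so $L^2_U(\Lambda_0) = L^2_U(\mathbb{R}^{2N})$).

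For the derivatives, the key input is the rescaled energy identity~\eqref{proofLower4}. By Theorems~\ref{thmUpperB} and~\ref{thmLowerB} together with Lemma~\ref{be1DEF}, its left-hand side $\lambda^{2\mathrm{D}}_k - Ne_\varepsilon$ converges to $\lambda^{1\mathrm{D}}_k = \mathcal{E}^{1\mathrm{D}}(\psi_{0,k})$. On the right-hand side, every term is non-negative; by~\eqref{dercontrol} the $x$-derivative summand at index $j$ has $\liminf \geq \int |\partial_{x_j}\psi_{0,k}|^2$, by~\eqref{potentialcontrol} the potential summand at index $j$ has $\liminf \geq \int |x_j|^2|\psi_{0,k}|^2$, and the $\varepsilon^{-1}$ times $y$-derivative summand is trivially bounded below by $0$. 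These lower bounds sum precisely to $\mathcal{E}^{1\mathrm{D}}(\psi_{0,k})$, hence each $\liminf$ must in fact be an equality. This yields
$$
\int_{\Lambda_0}|\partial_{x_j}\phi_{\varepsilon,k}|^2 U_1^2 \longrightarrow \int_{\Lambda_0}|\partial_{x_j}\psi_{0,k}|^2 U_1^2,\qquad \frac{1}{\varepsilon}\int_{\Lambda_0}|\partial_{y_j}\phi_{\varepsilon,k}|^2 U_1^2 \longrightarrow 0
$$
for every $j$; the second statement implies a fortiori that $\|\partial_{y_j}\phi_{\varepsilon,k}\|_{L^2_U(\Lambda_0)} \to 0 = \|\partial_{y_j}\psi_{0,k}\|_{L^2_U(\Lambda_0)}$ (since $\psi_{0,k}$ is independent of $y$).

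Combining these norm convergences with the weak $H^1_U(\Lambda_0)$-convergence of Proposition~\ref{ConvPhi} and invoking the standard Hilbert-space fact that weak convergence together with convergence of norms implies strong convergence concludes the proof. The only mildly delicate step is extracting term-by-term convergence from convergence of the total sum of non-negative pieces; this is standard (if $\liminf a_j^\varepsilon \geq a_j$ for each $j$ and $\sum_j a_j^\varepsilon \to \sum_j a_j$, then $a_j^\varepsilon \to a_j$ for each $j$), and it is what makes the present lemma essentially a direct corollary of the sharp energy asymptotics already established.
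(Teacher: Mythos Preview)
Your proof is correct and follows essentially the same route as the paper: both upgrade the weak $H^1_U(\Lambda_0)$-convergence of Proposition~\ref{ConvPhi} to strong convergence by combining Lemma~\ref{normal} for the $L^2_U$-part with norm convergence of the derivatives, the latter obtained by squeezing the rescaled energy identity~\eqref{proofLower4}/\eqref{proofLower5} between the matching upper and lower bounds. Your write-up is in fact slightly more explicit than the paper's about the term-by-term extraction (the paper passes directly from~\eqref{estiONder} to convergence of the full gradient norm without spelling out how the potential terms are peeled off), but the underlying argument is the same.
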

\begin{proof}
Since we have proved the corresponding strong $L^2$-compactness in Lemma \ref{normal}, it remains to prove that 
$$ \grad \phi_{\varepsilon,k} \to \grad \psi_{0,k} $$
strongly in $(L^2_U(\Lambda_0) )^{2N}$.

Recalling the estimate~\eqref{proofLower5}, it follows that
$$ \partial_{y_j} \phi_{\varepsilon,k} \to 0 = \partial_{y_j}  \psi_{0,k} $$
strongly in $L^2_U(\Lambda_0) $ for all $j$, and that 
\begin{equation}\label{estiONder}
\begin{split}
        \lambda^{1\mathrm{D}}_k & \ge \limsup_{\varepsilon \to 0}\sum\limits_{j=1}^N  \LR{ \int_{\Lambda_0} {\absolutevalue{\partial_{x_j}\phi_{\varepsilon,k}}^2}U_{1}^2 + \int_{\mathbb{R}^{2N}} |x_j| ^2 \absolutevalue{\phi_{\varepsilon,k}}^2U^2_1}\\
        & \ge \liminf_{\varepsilon \to 0}  \sum\limits_{j=1}^N  \LR{\int_{\Lambda_0} {\absolutevalue{\partial_{x_j}\phi_{\varepsilon,k}}^2}U_{1}^2 + \int_{\mathbb{R}^{2N}} |x_j| ^2 \absolutevalue{\phi_{\varepsilon,k}}^2U^2_1}.
\end{split}
\end{equation}
According to the proof of Proposition \ref{ConvPhi} and Lemma \ref{be1DEF}, the right-hand side of~\eqref{estiONder} is bounded from below by $\lambda_k^{1\mathrm{D}}$, which implies that
\begin{equation*}
    \lim_{\varepsilon \to 0} \left\Vert \grad \phi_{\varepsilon,k} \right\Vert_{(L^2_U(\Lambda_0))^{2N}} = \left\Vert \grad \psi_{0,k}^{} \right\Vert_{(L^2_U(\Lambda_0))^{2N}}.
\end{equation*}
Combining it with the fact proved in Proposition \ref{ConvPhi} that $$\grad \phi_{\varepsilon,k} \to \grad \psi_{k}^{1\mathrm{D}} $$ weakly in $(L^2_U(\Lambda_0))^{2N}$, we obtain that such convergence is in fact strong, which concludes the proof of Lemma \ref{cptH1}.
\end{proof}

\begin{proof}[Proof of Theorem \ref{thmEF}]
Using a diagonal extraction, we can construct a sequence $\{\phi_{\varepsilon,k}\}_{\varepsilon}$ which converges to some $\psi_{0,k}$ in the sense of~Proposition \ref{ConvPhi} for all $k$. 
We know from Proposition \ref{be1DEF} that these limit $\psi_{0,k}$ are eigenfunctions corresponding to the 1D eigenvalues $\lambda_k^{1\mathrm{D}}$ respectively, and from now on we denote them by $\psi^{1\mathrm{D}}_k$.
Similarly to the proof of Proposition \ref{be1DEF} above, we can deduce that $\{ \psi^{1\mathrm{D}}_k\}_k $ is an orthonormal set in $L^2(\mathbb{R}^N)$, hence, it is an orthonormal basis.

Recall the notation in the proof of Proposition \ref{ConvPhi}, we have
\begin{equation*}    \begin{split}
        \left\Vert\Psi_k^{2\mathrm{D}} -  \psi^{1\mathrm{D}}_k  U_{\varepsilon} \right\Vert_{L^2(\mathbb{R}^{2N})}
         & = \norm{\phi_k U_{\varepsilon} e^{-\mathrm{i}\alpha S} - \psi^{1\mathrm{D}}_kU_{\varepsilon} }_{L^2(\mathbb{R}^{2N})}  \\
        & \le \norm{ \left({\phi_k  - \psi^{1\mathrm{D}}_k }\right)  U_{\varepsilon} e^{-\mathrm{i}\alpha S}  }_{L^2(\mathbb{R}^{2N})} + \norm{\psi^{1\mathrm{D}}_k  U_{\varepsilon} \left(e^{-\mathrm{i}\alpha S} -1\right)}_{L^2(\mathbb{R}^{2N})} \\
        & \le \norm{ \phi_{\varepsilon, k} - \psi^{1\mathrm{D}}_k}_{L^2_U(\mathbb{R}^{2N})} +  \norm{\psi^{1\mathrm{D}}_k\left( e^{-\mathrm{i}\alpha\sqrt{\varepsilon} S_{}} - 1 \right)}_{L^2_U(\mathbb{R}^{2N})}.
    \end{split}
\end{equation*}
The first term $\Vert{ \phi_{\varepsilon, k} - \psi^{1\mathrm{D}}_k}\Vert_{L^2_U(\mathbb{R}^{2N})} $ vanishes as $\varepsilon$ goes to $0$ because of the strong convergence of Lemma \ref{normal}. 
It is clear that $\psi^{1\mathrm{D}}_k( e^{-\mathrm{i}\alpha\sqrt{\varepsilon} S_{}} -1 )$ is dominated by $2\psi^{1\mathrm{D}}_k$, so using the dominated convergence theorem, the second term $\Vert{\psi^{1\mathrm{D}}_k( e^{-\mathrm{i}\alpha\sqrt{\varepsilon} S_{}} - 1 )}\Vert_{L^2_U(\mathbb{R}^{2N})}  $ also vanishes as $\varepsilon$ goes to $0$. 
Hence, $$\Psi^{\mathrm{2D}}_{k} - \psi^{1\mathrm{D}}_k U_{\varepsilon} \to 0 $$ 
strongly in $L^2(\mathbb{R}^{2N})$.

Recall the definition of $\varphi^{\varepsilon}_{k} $ in~\eqref{eq:proj1D}. We can rewrite it with the same notation as in the proof of Proposition \ref{ConvPhi}:
\begin{equation*}
    \varphi_{k}^{\varepsilon}(x_1,\dots,x_N) =     \int_{\mathbb{R}^N}{\phi_{\varepsilon,k}^{}} (\mathbf{x}_1,\dots,\mathbf{x}_N) U_{1}^2({y}_1,\dots,{y}_N)\differential y_1\cdots\differential y_N.
\end{equation*}
Meanwhile, we can rewrite the eigenfunction $ \psi^{1\mathrm{D}}_{k}$ in the similar form:
\begin{equation*}
    \psi^{1\mathrm{D}}_{k}(x_1,\dots,x_N)  = \int_{\mathbb{R}^N} \psi^{1\mathrm{D}}_{k}(x_1,\dots,x_N) U^2_{1}(y_1,\dots,y_N) \d y_1\cdots \d y_N .
\end{equation*}
Using the Cauchy-Schwarz inequality, we have
\begin{equation*}
    \begin{split}
        \int_{\mathbb{R}^N\backslash \mathbb{D}^{1\mathrm{D}}} \left| \varphi^{\varepsilon}_{k} - \psi^{1\mathrm{D}}_k \right|^2  & \le  \int_{\mathbb{R}^N\backslash \mathbb{D}^{1\mathrm{D}}} \LR{\int_{\R^N} \left|\phi_{\varepsilon,k} - \psi^{1\mathrm{D}}_k \right| U_{1}^2  \d y_1\cdots \d y_N }^2 \\
        & \le \int_{\mathbb{R}^N\backslash \mathbb{D}^{1\mathrm{D}}} \LR{\int_{\R^N}  \left|\phi_{\varepsilon,k} - \psi^{1\mathrm{D}}_k \right|^2 U_1^2\d y_1\cdots \d y_N }.
    \end{split}
\end{equation*}
Similarly to their derivatives, we then obtain
\begin{equation}\label{controlprojection}
    \norm{\varphi^{\varepsilon}_{k} - \psi^{1\mathrm{D}}_k}_{H^1(\mathbb{R}^N\backslash \mathbb{D}^{1\mathrm{D}})} \le \norm{\phi_{\varepsilon,k} - \psi_k^{1\mathrm{D}}}_{H^1_U(\Lambda_0)}.
\end{equation}
Since $ \{\phi_{\varepsilon,k}\}_{\varepsilon} $ converges strongly to $\psi^{1\mathrm{D}}_k$ in $H^{1}_{U}(\Lambda_0)$ by Lemma \ref{cptH1} above, the control~\eqref{controlprojection} implies $$\varphi_k^{\varepsilon} \to \psi_k^{1\mathrm{D}}$$ strongly in $H^1(\mathbb{R}^N\backslash \mathbb{D}^{1\mathrm{D}})$, which concludes the proof of Theorem \ref{thmEF}.
\end{proof}

\newpage
%
%

\end{document}